\def\PhiBS{\Psi}
\newcommand{\dK}[1]{\mathbin{\widesim{#1}{\mathsf{dK}}}}
\newcommand{\K}[1]{\mathbin{\widesim{#1}{\mathsf{K}}}}
\newcommand{\simRBS}[1]{\mathbin{\widesim{#1}{\rBS}}}
\newcommand{\simCBS}[1]{\mathbin{\widesim{#1}{\cBS}}}
\def\cI{I}
\def\I{\cI}
\def\Ifpf{I^{\mathsf{FPF}}}
\def\onefpf{1_{\mathsf{FPF}}}
\def\mmap{\iota_{\mathsf{asc}}}
\def\nmap{\iota_{\mathsf{des}}}
\definecolor{darkred}{rgb}{0.7,0,0} 
\newcommand{\defn}[1]{{\color{darkred}\emph{#1}}} 
\numberwithin{equation}{section}
\theoremstyle{definition}
\newtheorem* {theorem*}{Theorem}
\newtheorem* {conjecture*}{Conjecture}
\newtheorem{theorem}{Theorem}[section]
\theoremstyle{definition}
\theoremstyle{definition}
\newtheorem* {example*}{Example}
\newtheorem{lemma}[theorem]{Lemma}
\theoremstyle{definition}
\newtheorem{definition}[theorem]{Definition}
\theoremstyle{definition}
\newtheorem{proposition}[theorem]{Proposition}
\newtheorem* {remark*}{Remark}
\newtheorem {remark}[theorem]{Remark}
\theoremstyle{definition}
\newtheorem {example}[theorem]{Example}
\theoremstyle{definition}
\theoremstyle{definition}
\theoremstyle{definition}
\theoremstyle{definition}
\def\({\left(}
\def\){\right)}
\newcommand{\QQ}{\mathbb{Q}}
\def\cX{\mathcal{X}}
\def\NN{\mathbb{N}}
\def\ZZ{\mathbb{Z}}
\def\spanning{\textnormal{-span}}
\newcommand{\cM}{\mathcal{M}}
\newcommand{\cN}{\mathcal{N}}
\def\fk{\mathfrak}
\def\barr{\begin{array}}
\def\earr{\end{array}}
\def\ba{\begin{aligned}}
\def\ea{\end{aligned}}
\def\be{\begin{equation}}
\def\ee{\end{equation}}
\def\quand{\quad\text{and}\quad}
\def\quord{\quad\text{or}\quad}
\def\cH{\mathcal H}
\def\hs{\hspace{0.5mm}}
\def\ds{\displaystyle}
\def\ben{\begin{enumerate}}
\def\een{\end{enumerate}}
\def\hs{\hspace{0.5mm}}
\def\D{\hat D}
\def\Des{\mathrm{Des}}
\def\b{\textbf{b}}
\newcommand{\cA}{\mathcal{A}}
\def\cG{\mathcal{G}}
\def\Inv{\operatorname{Inv}}
\newcommand{\arc}[2]{ \ar @/^#1pc/ @{-} [#2] }
\def\arcstop{\endxy\ }
\def\arcstart{\ \xy<0cm,-0.5cm>\xymatrix@R=.1cm@C=.5cm }
\newcommand{\arcstartc}[1]{\ \xy<0cm,-.15cm>\xymatrix@R=.1cm@C=#1cm}
\def\m{\mathfrak{m}}
\def\n{\mathfrak{n}}
\def\invsim_i{\overset{\mathrm{i}}{\underset{\mathrm{inv}}{\sim}}}
\def\LL{\ZZ[x,x^{-1}]}
\def\H{\mathcal{H}}
\def\D{\mathsf{D}}
\def\Ifpf{\cI^{\mathsf{FPF}}}
\def\cG{\mathcal{G}}
\def\Asc{\mathrm{Asc}}
\def\m{\mathbf{m}}
\def\n{\mathbf{n}}
\def\row{\mathfrak{row}}
\def\b{\mathsf{ivalue}}
\def\f{\mathsf{frow}}
\def\from{\leftarrow}
\def\rBS{\mathsf{rB}}
\def\cBS{\mathsf{cB}}
\def\fromRBS{\xleftarrow{\hs\rBS\hs}}
\def\fromCBS{\xleftarrow{\hs\cBS\hs}}
\def\fromCBSalt{\xLeftarrow{\hs\cBS\hs}}
\def\fromMRSK{\fromRBS}
\def\fromNRSK{\fromCBS}
\newcommand{\widesim}[3][1.5]{
  \overset{#2}{\underset{#3}{\scalebox{#1}[1]{$\sim$}}}
}
\newcommand{\ytab}[1]{
\ytableausetup{boxsize = .4cm,aligntableaux=center}
 \begin{ytableau} #1 \end{ytableau}}
\newcommand{\ytabb}[1]{
\ytableausetup{boxsize = .9cm,aligntableaux=center}
 \begin{ytableau} #1 \end{ytableau}}
\def\BPath{\mathrm{B}}
\def\ymark{{\fk e}}
\def\mLambda{\lambda_{\mathsf{row}}}
\def\nLambda{\lambda_{\mathsf{col}}}
\def\mIota{\iota_{\mathsf{row}}}
\def\nIota{\iota_{\mathsf{col}}}
\def\mGamma{\Gamma^{\mathsf{row}}}
\def\nGamma{\Gamma^{\mathsf{col}}}
\def\mOmega{\omega^{\mathsf{row}}}
\def\nOmega{\omega^{\mathsf{col}}}
\def\mAsc{\Asc^{\mathsf{row}}}
\def\nAsc{\Asc^{\mathsf{col}}}
\def\cGm{\cG^{\mathsf{asc}}}
\def\cGn{\cG^{\mathsf{des}}}
\def\PRBS{P_{\rBS}}
\def\PCBS{P_{\cBS}}
\def\PMRSK{\hat P_{\rBS}}
\def\PNRSK{\hat P_{\cBS}}
 \newcommand{\mleftrightarrow}[1]{\mathbin{\underset{\mathsf{row}}{\overset{#1}{\longleftrightarrow}}}}
 \renewcommand{\nleftrightarrow}[1]{\mathbin{\underset{\mathsf{col}}{\overset{#1}{\longleftrightarrow}}}}
\def\PRSK{P_{\mathsf{RS}}}
\def\QRSK{Q_{\mathsf{RS}}}
\def\fromRSK{\xleftarrow{\hs\mathsf{RS}\hs}}
\begin{document}
\title{Insertion algorithms for Gelfand $S_n$-graphs}
\author{
Eric MARBERG \\ Department of Mathematics \\  HKUST \\ {\tt emarberg@ust.hk}
\and
Yifeng ZHANG \\ Center for Combinatorics \\ Nankai University \\ {\tt zhang.yifeng@nankai.edu.cn}
}

\date{}

\maketitle

\abstract{
The two tableaux assigned by the Robinson--Schensted correspondence are equal if and only if the input permutation is an involution, so the RS algorithm restricts to a bijection between involutions in the symmetric group and standard tableaux. Beissinger found a concise way of formulating this restricted map, which involves adding an extra cell at the end of a row after a Schensted insertion process. We show that by changing this algorithm slightly to add cells at the end of columns rather than rows, one obtains a different bijection from involutions to standard tableaux. Both maps have an interesting connection to representation theory. Specifically, our insertion algorithms classify the molecules (and conjecturally the cells) in the pair of $W$-graphs associated to the unique equivalence class of perfect models for a generic symmetric group.
}

\setcounter{tocdepth}{2}

\section{Introduction}
The well-known \defn{Robinson--Schensted (RS) correspondence}
is a bijection $w\mapsto (\PRSK(w),\QRSK(w))$ from permutations to pairs of standard Young tableaux of same shape. 
This correspondence can be described by the row bumping process known as \defn{Schensted insertion} \cite{Schensted}.
In this formulation, the tableau $\PRSK(w) := \emptyset \fromRSK w_1 \fromRSK w_2 \fromRSK \cdots \fromRSK w_n$ is built up from the empty shape by inserting the values of $w$. Here $T \fromRSK a$ is the tableau formed by 
inserting a number $a$ into the first row of $T$, where either the smallest number $b>a$ is bumped and recursively inserted into the next row,
or $a$ is added to the end of the row if no such $b$ exists. For example,
\[
\ytab{
  1 & 3 
  \\4
} \fromRSK 2
   = 
\ytab{
   1 & 2  \\
   3  \\
   4
}
\]
since $2$ bumps $3$ in the first row, which bumps $4$ in the second row, which is added to the end of the third row. See Section~\ref{sch-sect} for
more background on this algorithm.

The two tableaux $\PRSK(w)$ and $\QRSK(w)$ are equal if and only if $w=w^{-1}$.
 Thus, the RS algorithm restricts to a bijection between involutions in the symmetric group and standard tableaux. In \cite{Beissinger}, Beissinger 
 shows how to directly construct this restricted bijection using a modified form of Schensted insertion, which we refer to as \defn{row Beissinger insertion}. This operation inserts an integer pair $(a,b)$ with $a\leq b$ into a tableau $T$ to
 form a larger tableau $T \fromRBS (a,b)$. If $a=b$ then $T\fromRBS(a,b)$ is given by adding $a$ to the end of the first \textbf{row} of $T$. If $a<b$
 and the operation $\fromRSK a$ adds a box to $T$ in \textbf{row} $i$,
 then $T\fromRBS(a,b)$ is formed from $T\fromRSK a$ by adding  $b$ to the end of \textbf{row} $i+1$.
For example, we have 
\[ \ytab{
  2 & 3 
  \\4
}\fromRBS (5,5)
=
 \ytab{
  2 & 4 & 5
  \\3
},
\quad
\ytab{
  1 & 3 
  \\4
} \fromRBS (2,5)
   = 
\ytab{
   1 & 2  \\
   3  \\
   4 \\ 
   5
},
\quand
\ytab{
  1 & 4 
  \\3
} \fromRBS (2,5)
   = 
\ytab{
   1 & 2  \\
   3  & 4\\ 
   5
}.
\]
Beissinger \cite[Thm. 3.1]{Beissinger}
proves that if  $w=w^{-1} \in S_n$ and $(a_1,b_1)$, $(a_2,b_2)$, \dots, $(a_q,b_q)$ 
are the integer pairs $(a,b)$ with $1\leq a \leq b = w(a)\leq n$, ordered such that $b_1<b_2<\dots<b_q$,
then
\be\label{beis-eq}
\PRSK(w)=\QRSK(w) =  \emptyset \fromRBS (a_1,b_1)\fromRBS (a_2,b_2) \fromRBS\cdots \fromRBS (a_q,b_q).
\ee
We will review more properties of row Beissinger insertion in Section~\ref{rbs-sect}.

There is  a  ``column'' version of Beissinger's insertion algorithm that gives another bijection 
from involutions in the symmetric group to standard tableaux. This map does not appear to have been described
previously in the literature and is the starting point of this article.
The main idea is as follows. 
Suppose $(a,b)$ is an integer pair with $a\leq b$ and $T$ is a tableau.
If $a=b$ then we define $T\fromCBS(a,b) $  by adding $a$ to the end of the first \textbf{column} of $T$. If $a<b$
 and  $\fromRSK a$ adds a box to $T$ in \textbf{column} $j$,
 then we define $T\fromCBS(a,b)$  from $T\fromRSK a$ by adding  $b$ to the end of \textbf{column} $j+1$.
This operation, which we call  \defn{column Beissinger insertion},
is given in exactly the same way as row Beissinger insertion,
just replacing the bold instances of the word ``row'' in the previous paragraph by ``column.''
For example, we have 
\[ \ytab{
  2 & 3 
  \\4
}\fromCBS (5,5)
=
 \ytab{
  2 & 3 
  \\4
  \\ 5
},
\quad
\ytab{
  1 & 3 
  \\4
} \fromCBS (2,5)
    = 
\ytab{
   1 & 2   \\
   3  & 5 \\
   4
},
\quand
\ytab{
  1 & 4 
  \\3
} \fromCBS (2,5)
    = 
\ytab{
   1 & 2  & 5 \\
   3  & 4
}.\]
Our first main result (see Theorem~\ref{pcbs-thm}) is to show that if $(a_i,b_i)$ are as in \eqref{beis-eq}, then the map
\be\label{pcbs-eq}
w \mapsto \PCBS(w):=  \emptyset \fromCBS (a_1,b_1)\fromCBS (a_2,b_2) \fromCBS\cdots \fromCBS (a_q,b_q)
\ee is another bijection from involutions $w=w^{-1} \in S_n$ to standard tableaux with $n$ boxes.

\begin{remark}\label{eqnat-rmk}
Considering our terminology, it would be equally natural to define ``column Beissinger insertion'' by making a different substitution in the definition of $T \fromRBS (a,b)$, namely by inserting $a$ using  \defn{Schensted column insertion} rather than the usual row bumping algorithm, and then still adding $b$ to the end of row $i+1$ if this adds a box in row $i$. 
If we write this alternative operation as $T\fromCBSalt(a,b)$,
then we always have
 $T^\top \fromCBSalt(a,b) = (T\fromCBS(a,b))^\top$
 where $\top$ denotes the usual transpose on tableaux.
 Therefore, replacing $\fromCBS$ with $\fromCBSalt$ in \eqref{pcbs-eq}
 leads to essentially the same bijection, 
 as 
$
 \PCBS(w)^\top=  \emptyset \fromCBSalt (a_1,b_1)\fromCBSalt (a_2,b_2) \fromCBSalt\cdots \fromCBSalt (a_q,b_q).
$
\end{remark}

In general, there does not seem to be a simple relationship between $\PCBS(w)$ and $\PRSK(w)$,
and we do not know of any natural way to extend the domain of $\PCBS$ from involutions to all permutations.
We provide a detailed analysis of both algorithms in Sections~\ref{rbs-sect} and \ref{cbs-sect}.
In particular, we exactly characterize when two involutions $y$ and $z$ are such that
 $\PCBS(y)$ and $\PCBS(z)$ differ by a single \defn{dual equivalence operation}; see Theorem~\ref{n-cb-thm}.
 The version of this result for $\PRSK$ can be derived in a more elementary way from
 known properties of \defn{(dual) Knuth equivalence}, and was discussed previously in \cite{Post}; see Theorem~\ref{direct-prop}.

We were unexpectedly lead to consider these maps for applications in representation theory,
specifically to the problem of classifying the cells and molecules in certain $W$-graphs for 
the symmetric group $W=S_n$. 
Recall that each Coxeter group $W$ has an associated \defn{Iwahori-Hecke algebra} $\cH$ 
which is equipped with both a \defn{standard basis} $\{ H_w : w \in W\}$ and a \defn{Kazhdan--Lusztig basis} $\{ \underline H_w : w \in W\}$.
The action of the standard basis on the Kazhdan--Lusztig basis by left and right multiplication is encoded in two directed graphs,
called the \defn{left and right Kazhan-Lusztig graphs} of $W$. These objects are the motivating examples of 
\defn{$W$-graphs}, which are certain weighted directed graphs that encode $\H$-representations with
canonical bases analogous to $\{ \underline H_w : w \in W\}$. For the precise definition of a $W$-graph, see Section~\ref{w-sect}.

The principal combinatorial problem related to a given $W$-graph is to classify its \defn{cells},
which are its strongly directed components. This is because the original $W$-graph structure restricts to a $W$-graph on each cell. Moreover, the collection of cells is naturally a directed acyclic graph which induces a filtration
on the $W$-graph's associated $\H$-module. A related problem is to describe the \defn{molecules} in $W$-graph:
these consist of the connected components in the undirected graph whose edges are the pairs of $W$-graph vertices $\{x,y\}$
with edges $x\to y$ and $y \to x$ in both directions. 

Finding the molecules in a $W$-graph is easier than identifying its cells,
and each cell is a union of one or more molecules. However, in some special cases of interest, the cells and molecules in a $W$-graph coincide. 
Most notably, this occurs for the left and right Kazhdan--Lusztig graphs of the symmetric group  \cite[\S6.5]{CCG}.
The molecules (equivalently, the cells) in these $W$-graphs are the subsets on which $\QRSK$ and $\PRSK$ are respectively constant \cite[Thm. 1.4]{KL}.

Our results in Section~\ref{3-sect}
show that the row\footnote{For parallelism, it is convenient to refer to ``row Beissinger insertion'' but note from \eqref{beis-eq} that this gives the same output when applied to $w=w^{-1}\in S_n$ as Robinson--Schensted insertion.}  and column Beissinger insertion algorithms described above
have a similar relationship to the molecules (and conjecturally, the cells)
in a different pair of $W$-graphs for $W=S_n$.
In \cite{MZ} we introduced the notion of a \defn{perfect model} for a finite Coxeter group.
A perfect model consists of a set of linear characters of subgroups satisfying some technical conditions;
the name derives from the requirement that each subgroup be the centralizer of a \defn{perfect involution}
in the sense of \cite{RV} in a standard parabolic subgroup.
Each perfect model gives rise to a pair of $W$-graphs whose underlying $\H$-representations are \defn{Gelfand models},
meaning they decompose as multiplicity-free sums of all irreducible $\H$-modules.

Our previous paper \cite{MZ2} classified the perfect models in all finite Coxeter groups up to a natural form of equivalence.
For the symmetric group $S_n$ when $n\notin\{2,4\}$, there is just one equivalence class of perfect models \cite[Thm. 3.3]{MZ2},
and this defines a canonical pair of \defn{Gelfand $S_n$-graphs}  $\mGamma$ and $\nGamma$.
We review the explicit construction of these graphs in Section~\ref{gel-sect}. Their underlying vertex sets are certain subsets of fixed-point-free
involutions in $S_{2n}$, whose images under both forms of Beissinger insertion are standard tableaux with $2n$ boxes.
We can summarize our main result connecting $\mGamma$ and $\nGamma$ to Beissinger insertion as follows: 

\begin{theorem*}
The molecules in the $S_n$-graphs $\mGamma$ (respectively, $\nGamma$) are the sets of vertices whose images under row (respectively, column) Beissinger insertion have the same shape when the boxes containing $n+1,n+2,\dots,2n$ are omitted.
\end{theorem*}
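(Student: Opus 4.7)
The plan is to reduce the classification of molecules in $\mGamma$ and $\nGamma$ to the dual equivalence classifications already provided by Theorems~\ref{direct-prop} (for the row case) and \ref{n-cb-thm} (for the column case). Throughout I will use that molecules are the connected components of the undirected graph whose edges are the symmetric pairs of directed $W$-graph edges, and that dual equivalence classes on standard tableaux are classically parametrized by shape.

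First, I would unpack the definitions from Section~\ref{gel-sect} to describe the elementary molecule edges concretely. Since the directed edges of $\mGamma$ and $\nGamma$ are produced from Hecke-algebra matrix coefficients of simple generators $s_i \in S_n$ acting on the canonical Gelfand-model basis, a molecule edge between vertices $y,z$ should correspond to some $s_i$ inducing a symmetric pairing. My expectation is that this reduces to a purely combinatorial rule of the form $z = \sigma_i(y)$ for a specified involution-theoretic move $\sigma_i$, possibly conditioned on an ``ascent/descent'' status of $s_i$ at $y$ coming from the perfect-model data.

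Second, I would invoke Theorems~\ref{direct-prop} and \ref{n-cb-thm} to express each such elementary molecule edge as a single dual equivalence operation on $\PRBS(y), \PRBS(z)$ (respectively $\PCBS(y), \PCBS(z)$). The essential point is that the vertex set consists of certain fixed-point-free involutions in $S_{2n}$ viewed through the perfect model, and the simple reflections $s_i$ (for $i < n$) only act on the positions of boxes labeled $1,\dots,n$ in the Beissinger tableau: the labels $n+1,\dots,2n$ record the matched partners and are moved only when their small-label counterparts are. Consequently the induced dual equivalence move on tableaux with $2n$ boxes fixes the set of boxes containing $n+1,\dots,2n$ and in particular preserves the shape of the sub-tableau obtained by deleting them. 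This shape is therefore an invariant on every molecule.

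Third, for the converse inclusion, I would show that any two vertices with the same restricted shape lie in a common molecule. Here I would use the standard fact that full dual equivalence classes of standard Young tableaux are determined by shape, combined with a lifting argument: given tableaux $T,T'$ with $2n$ boxes and identical shapes on entries $\leq n$, I would produce a sequence of dual equivalences connecting $T$ to $T'$ that only relocates low-labeled boxes, and then translate each step back through Theorems~\ref{direct-prop}/\ref{n-cb-thm} into a molecule edge. This can be organized by induction on the number of boxes with labels $>n$, peeling off the largest label (whose position is forced by the restricted shape) and reducing to dual equivalence on a tableau with fewer boxes.

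The hardest step is the second one: precisely matching the Hecke-action edges that generate the molecule relation in $\mGamma$ and $\nGamma$ with the elementary dual equivalence moves characterized in Theorems~\ref{direct-prop} and \ref{n-cb-thm}. This requires a careful bookkeeping of how the perfect-model linear characters enter the edge weights (so that the edges really are symmetric), and a separate treatment of boundary cases where a simple reflection acts diagonally or trivially on a given vertex. Once this matching is in hand, the preservation of the restricted shape and the converse connectivity argument follow from essentially formal manipulations.
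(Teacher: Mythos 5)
Your plan is essentially the same as the paper's proof: characterize the bidirected (molecule) edges concretely, match them with single dual equivalence moves on $\PRBS$ (resp.\ $\PCBS$) via Theorem~\ref{direct-prop} (resp.\ Theorem~\ref{n-cb-thm}), note that $D_i$ for $1<i<n$ only touches entries $\le n$ so it commutes with restriction to $[n]$, and invoke the classical fact that standard tableaux of a given shape form a single dual equivalence class.

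Two points that you wave at but that carry real technical weight in the paper, and that your write-up should not leave implicit: (1) your ``purely combinatorial rule $z=\sigma_i(y)$'' for a molecule edge is the content of the Lemma preceding Section~\ref{mleft-sect}, where the bidirected edges are pinned down as the Bruhat-chain relations $y\mleftrightarrow{i}z$ / $y\nleftrightarrow{i}z$; obtaining this needs the computations from \cite{MZ} together with an argument ruling out the commuting case $|i-j|>1$, and the two graphs come out with \emph{different} chain conditions ($\leq$ versus $<$ placed differently), which matters in the subsequent case analysis. (2) The claim that the boxes labeled $n+1,\dots,2n$ are ``forced by the restricted shape'' is precisely Lemmas~\ref{pm-lem} and~\ref{pn-lem} (i.e.\ $\mIota(\PMRSK(z))=\PRBS(z)$ and $\nIota(\PNRSK(z))=\PCBS(z)$), which take a genuine bumping-path argument; these also feed into the bijectivity statements (Theorems~\ref{pmrsk-thm}, \ref{pnrsk-thm}) that you need in order to convert ``same restricted shape'' into ``connected in the molecule graph.'' Your inductive peeling of the largest label for the converse is a reasonable alternative organization, but once you have $\mIota$ and $\nIota$ the converse is immediate, so the paper's route is shorter. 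Otherwise your reduction is sound.
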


This result combines Theorems~\ref{mleft-thm} and \ref{nleft-thm}, which are proved in Section~\ref{mleft-sect};
see also Theorems~\ref{pmrsk-thm} and \ref{pnrsk-thm}.
At present, it is an open problem to upgrade this result to a classification of the cells in $\mGamma$ and $\nGamma$.
One reason this is difficult is that the $W$-graphs $\mGamma$ and $\nGamma$ are not \defn{admissible} in the sense of \cite{Nguyen,Stembridge}.
We suspect that the following is true, however:

\begin{conjecture*}[{\cite[Conj. 1.16]{MZ}}]
Every molecule in the $S_n$-graphs $\mGamma$ and $\nGamma$ is a cell.
\end{conjecture*}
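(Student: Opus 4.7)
The plan is to exploit the multiplicity-free structure of the Gelfand-model \(\H\)-representations carried by \(\mGamma\) and \(\nGamma\), combined with the explicit molecular parameterization by Beissinger insertion just established. Since every molecule is contained in some cell, it suffices to rule out cells that properly contain more than one molecule, which amounts to controlling the directed-but-not-bidirected edges of \(\mGamma\) and \(\nGamma\) that cross molecular boundaries.

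First I would study the \(\H\)-module carried by each molecule \(M\) via the restricted \(W\)-graph structure and try to show that its character is irreducible, naturally indexed by the partition \(\lambda\) that labels \(M\) (i.e.\ the shape of the Beissinger-insertion tableau after deleting the boxes containing \(n+1,\dots,2n\)). A natural inductive approach is to peel off one Beissinger-insertion step and descend to a smaller symmetric group, using the recursive description of \(\mGamma\) and \(\nGamma\) coming from the perfect-model construction of Section~\ref{gel-sect}. Matching the sum of these irreducible molecule-characters against the Gelfand decomposition then certifies that every irreducible of \(\H\) appears exactly once and ties each molecule to a specific irreducible constituent.

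Next I would introduce an analog of Lusztig's \(a\)-function: a statistic \(a(M)\) depending only on the molecule's shape \(\lambda\) (for instance \(n(\lambda)=\sum_i(i-1)\lambda_i\) or a related partition invariant) such that any directed edge \(x\to y\) in \(\mGamma\) joining vertices in distinct molecules \(M_x\neq M_y\) satisfies \(a(M_x)>a(M_y)\). Strict monotonicity along inter-molecular edges forces every strongly connected component to lie inside a single molecule, and so cells coincide with molecules. The parallel statement for \(\nGamma\) should follow by the same argument; alternatively one might hope to deduce it from the \(\mGamma\) case through the tableau-transpose observation of Remark~\ref{eqnat-rmk}.

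The hard part will be the cross-molecular monotonicity. Because \(\mGamma\) and \(\nGamma\) are explicitly non-admissible in the sense of Stembridge--Nguyen, the standard techniques for admissible \(W\)-graphs are not available. A promising route is to realize the Gelfand \(W\)-graphs as subquotients, quotients, or morphic images of an admissible ``parent'' \(W\)-graph---for instance a twisted-involution Kazhdan--Lusztig graph---for which cells and molecules are already known to coincide, and then to transport the cell structure through the comparison map, with the Beissinger algorithms serving as the combinatorial bridge that identifies how admissible cells cover Gelfand molecules. Verifying compatibility of edge weights in such a transport, in the absence of admissibility bounds, is the essential technical obstacle and the reason this statement remains conjectural.
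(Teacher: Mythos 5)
This statement is a \emph{conjecture}, not a theorem of the paper: the authors explicitly state that it is open, that they have verified it by computer only for $n\leq 10$, and that it is equivalent to the assertion that all cell representations for $\mGamma$ and $\nGamma$ are irreducible. There is therefore no ``paper's own proof'' to compare your proposal against. Your proposal is also, by its own admission, not a proof --- you identify the cross-molecular monotonicity step as ``the essential technical obstacle and the reason this statement remains conjectural.'' So at minimum you should not present this as a proof; it is a research program.

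Beyond that, your first step rests on a false premise. You propose to ``study the $\H$-module carried by each molecule $M$ via the restricted $W$-graph structure and try to show that its character is irreducible.'' But molecules do \emph{not} inherit a $W$-graph structure, and hence do not carry an $\H$-module: the paper states this explicitly (``These subsets do not inherit a $W$-graph structure but are easier to classify than the cells''). Only cells support the restricted $W$-graph/$\H$-module structure, because the defining recursion \eqref{hy-eq} closes up over downward-closed unions of cells but not over molecules in general. So one cannot speak of a molecule's ``character'' and match it against the Gelfand decomposition until one already knows the molecule is a cell --- which is precisely what you are trying to prove. The correct formulation (as in \cite[Conj.~1.16]{MZ}) is about the \emph{cell} representations being irreducible, and proving that requires ruling out directed edges between molecules of the same shape, which is exactly the hard monotonicity step you leave open. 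Your suggested remedies --- a shape-dependent $a$-function, or transport from an admissible ``parent'' $W$-graph --- are sensible directions, but neither is carried out, and the non-admissibility of $\mGamma$ and $\nGamma$ (which the paper flags as the obstruction) means the usual Stembridge-type combinatorial bounds on edge weights are simply unavailable.
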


We have done computer calculations to verify this conjecture for $n\leq 10$.
By dimension considerations, this statement is equivalent 
 to the claim that the cell representations for $\mGamma$ and $\nGamma$ are all irreducible, which we stated earlier as \cite[Conj. 1.16]{MZ}.

The rest of this paper is organized as follows. Section~\ref{2-sect} contains some preliminaries on the Robinson--Schensted correspondence and Knuth equivalence, as well as our main results on Beissinger insertion. Section~\ref{3-sect} reviews the construction
of the $S_n$-graphs  $\mGamma$ and $\nGamma$ and then proves our results about the molecules in these graphs.
Appendix~\ref{app-sect}, finally, carries out the technical proof of Theorem~\ref{n-cb-thm}.

\section{Insertion algorithms}\label{2-sect}

Throughout, $n$ is a fixed positive integer, $S_n$ is the  group of permutations of 
$[n] := \{1,2,\dots,n\}$,
$\cI_n := \{ w \in S_n : w=w^{-1}\}$ is the set of involutions in $S_n$,
and  $\Ifpf_{n}$ is the subset of fixed-point-free elements of $\cI_{n}$ (which is empty if $n$ is odd).
Let $s_i$ denote the simple transposition $(i,i+1) \in S_n$.

\subsection{Schensted insertion}\label{sch-sect}

The \defn{(Young) diagram} of an integer partition $\lambda =(\lambda_1\geq \lambda_2 \geq \dots \geq \lambda_k>0)$
is the set of positions
$\D_\lambda := \{ (i,j) \in [k]\times \ZZ: 1 \leq j \leq \lambda_i\}.$ A \defn{tableau} of shape $\lambda$
is a map $T : \D_\lambda \to \ZZ$, which we envision as an assignment of numbers
to some set of positions in a matrix. 

A tableau is \defn{semistandard} if its rows are weakly increasing and its columns are strictly increasing.
A tableau is \defn{standard} if its rows and columns are strictly increasing and its entries
are the numbers $1,2,3,\dots,n$ for some $n\geq 0$ without any repetitions.
Most of the tableaux considered in this article
will be semistandard but with all distinct entries; we refer to such tableaux as \defn{partially standard}.

As already discussed in the introduction, the \defn{Robinson--Schensted (RS) correspondence} is a bijection
from permutations 
to pairs 
 of standard tableaux of the same shape, which can be described 
using the following   insertion process.

\begin{definition}[\defn{Schensted insertion}]
Suppose $T$ is a partially standard tableau  and $x$ is an integer.
Start by inserting $x$ into the first row of $T$ by finding the row's first entry $y$ greater than $x$ and replacing $y$ by $x$.
If there is no such entry $y$ then $x$ is placed at the end of the row, and otherwise one proceeds by
inserting $y$ into the next row by the same process. 
Continue in this way until a new box is added to the end of a row of $T$.
Denote the result by $T \fromRSK x$. 
\end{definition}

\begin{example*}
We have $\ytab{ 1 & 5 \\ 3 & 6 \\ 4} \fromRSK 2 = \ytab{1 & 2 \\ 3 & 5 \\  4 & 6}$.
\end{example*}

\begin{definition}[RS correspondence]
For a permutation $w=w_1w_2\cdots w_n\in S_n$ 
let
\[\PRSK(w):= \emptyset \fromRSK w_1 \fromRSK w_2 \fromRSK \cdots \fromRSK w_n\] and 
let $\QRSK(w)$ be the tableau of the same shape with $i$ in the box added by the $\fromRSK w_i$ step.
\end{definition}

\begin{example*}
One can check that $\PRSK(31425) = \ytab{1 &2 & 5 \\ 3 & 4}$ and $\QRSK(31425) = \ytab{1 & 3 & 5 \\ 2 & 4}$.
\end{example*}

It is well-known that if $w \in S_n$ then $\PRSK(w^{-1}) = \QRSK(w)$; see, e.g., 
\cite[Thm. 6.4]{EG}.

\begin{example*}
It holds that $\PRSK(24135) =  \ytab{1 & 3 & 5 \\ 2 & 4}$ and $\QRSK(24135) = \ytab{1 &2 & 5 \\ 3 & 4}$.
\end{example*}

The \defn{row reading word} of
 a tableau $T$ is the sequence $\row(T)$ given by reading the rows of $T$ from left to right, but starting with the last row. For example, 
$
\row\(\hs\ytab{1&2&5\\3&4}\hs\)=34125.
$
It is easy to see that $\PRSK(\row(T)) = T$.

Fix a standard tableau $T$  with $n$ boxes.
Given a permutation $w \in S_n$, let $w(T)$ be the 
tableau formed by applying $w$ to each entry of $T$.
For each integer  $1<i<n$,
the \defn{elementary dual equivalence operator} $D_i$ is the map acting on $T$ by
\be\label{D-def}
D_i(T) :=\begin{cases}
s_{i-1}(T) &\text{ if $i+1$ lies between $i$ and $i-1$ in }\row(T),\\
s_{i}(T) &\text{ if $i-1$ lies between $i$ and $i+1$ in }\row(T),\\
T &\text{ if $i$ lies between $i-1$ and $i+1$ in }\row(T).
\end{cases}
\ee
This definition follows  \cite{Assaf}, and is equivalent to the one given by Haiman in \cite{Haiman}. It is an instructive exercise to check that the operator $D_i$ is an involution and always produces another standard tableau \cite[\S2.3]{Assaf}.

Suppose $a<b<c$ are integers.
There are four permutations of these numbers that are not strictly increasing or strictly decreasing, namely, $acb$, $bac$, $bca$, and $cab$. A \defn{Knuth move} on these words exchanges the $a$ and $c$ letters. Thus $acb$ and $cab$ are connected by a Knuth move, as are $bca$ and $bac$.

Suppose $v,w \in S_n$ and $i$ is an integer with $1<i<n$.
We write $v \K{i} w$
and say that a \defn{Knuth move} exists between $v$ and $w$ 
 if either 
 \ben
 \item[(a)] $w$ is obtained from $v$ by performing a Knuth move on $v_{i-1}v_iv_{i+1}$, or 
\item[(b)] $w$ is equal to $v$ and the subword $v_{i-1}v_iv_{i+1}$ is in monotonic order.
\een
Similarly, we write  $v \dK{i} w$ and say that a \defn{dual Knuth move} exists between $v$ and $w$ if $v^{-1} \K{i} w^{-1}$.
Two permutations that are connected by a sequence of (dual) Knuth moves are called \defn{(dual) Knuth equivalent}. 
For example, we have 
 \[25431\K{2}52431\K{3}54231
\quand 43251\dK{4} 53241\dK{3} 54231.\]
These relations are connected to the RS correspondence by the following identities.

\begin{theorem}[{\cite{EG,Haiman}}] 
\label{A-thm}
Let $v,w \in S_n$ and $1<i<n$.
Then:
\ben
\item[(a)] One has $v\K{i}w$ if and only if $\PRSK(v) = \PRSK(w)$ and $\QRSK(v)=D_i(\QRSK(w))$.
\item[(b)] One has $v\dK{i}w$ if and only if $\PRSK(v)=D_i(\PRSK(w))$ and $\QRSK(v) = \QRSK(w)$.

\item [(c)] The permutations $v$ and $w$ are Knuth equivalent if and only if $\PRSK(v)=\PRSK(w)$,
and dual Knuth equivalent if and only if $\QRSK(v)=\QRSK(w)$.
\een
\end{theorem}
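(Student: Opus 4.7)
The plan is to prove part (a) from scratch by a direct analysis of Schensted insertion, deduce part (b) from part (a) via the symmetry $\PRSK(w^{-1}) = \QRSK(w)$, and then obtain part (c) by iterating (a) and (b) and exhibiting, for each permutation $w$, a chain of Knuth moves reducing $w$ to $\row(\PRSK(w))$.

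For part (a), the trivial case is where $v_{i-1}v_iv_{i+1}$ is monotonic, so $v=w$; here I would check that after inserting $v_1,\dots,v_{i-2}$ and then the three monotonic values, the new boxes at steps $i-1,i,i+1$ are arranged so that in $\row(\QRSK(v))$ the label $i$ lies between $i-1$ and $i+1$, which by \eqref{D-def} means $D_i$ fixes $\QRSK(v)$. The real content is the exchange case, say $v_{i-1}v_iv_{i+1}=acb$ and $w_{i-1}w_iw_{i+1}=cab$ with $a<b<c$ (the $bca \leftrightarrow bac$ case is handled similarly). Let $T$ be the common tableau obtained after inserting $v_1=w_1,\dots,v_{i-2}=w_{i-2}$. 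I would track the three successive insertions into $T$ in both words, using the standard bumping lemmas (e.g., if $x<y$ are inserted in order, the bumping path of $y$ lies strictly to the right of that of $x$). Because $a<c$, the insertions $T\fromRSK a \fromRSK c$ and $T\fromRSK c \fromRSK a$ have bumping paths that interact in a predictable way; inserting $b$ in between (for $acb$) or at the end (for $cab$) yields the same final tableau but with the three new boxes added in a different order, establishing $\PRSK(v)=\PRSK(w)$. A direct inspection of those three boxes then shows that their labels $i-1,i,i+1$ in $\QRSK(v)$ and $\QRSK(w)$ differ by exactly the swap prescribed by $D_i$ in \eqref{D-def}.

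Part (b) is immediate from (a): the identity $\PRSK(w^{-1})=\QRSK(w)$ and the definition $v\dK{i}w \iff v^{-1}\K{i}w^{-1}$ transport the equivalence in (a) to the desired statement for dual Knuth moves, after checking that the involution $w\mapsto w^{-1}$ intertwines the roles of $\PRSK$ and $\QRSK$. For part (c), one direction is obtained by iterating (a) and (b): a chain of (dual) Knuth moves preserves $\PRSK$ (respectively $\QRSK$). For the converse in the Knuth case, I would show that any $w\in S_n$ is Knuth-equivalent to $\row(\PRSK(w))$ by induction on $n$, using that Knuth moves can slide the largest letter to its correct final position without disturbing the insertion output; then two permutations with the same insertion tableau are both Knuth-equivalent to the same row reading word, hence to each other. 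The dual Knuth statement follows by applying this to inverses.

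The main obstacle is the case analysis in part (a): one must verify that the bumping paths created by the two orderings of a Knuth triple $\{a,b,c\}$ produce the same final shape and the same content, and that the three newly added boxes carry the labels $i-1, i, i+1$ in precisely the pattern compatible with $D_i$. The delicate sub-case is when $b$ is bumped during the insertion (rather than added to a row-end), since then the bumping path of $b$ can interact with the already-modified columns from the insertion of $a$ or $c$. Once this local three-step analysis is carried out and matched against the three branches of \eqref{D-def}, the rest of the proof is essentially formal.
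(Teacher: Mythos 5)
The paper does not prove this theorem --- it is explicitly cited to Edelman--Greene, Haiman, and the expository thesis of Post, with pointers to the precise statements in those sources --- so there is no in-paper argument to compare against, and any proof you write is by definition a different route. Your outline is essentially the classical argument from those references: analyze a single Schensted triple to get (a), transport to (b) via the symmetry $\PRSK(w^{-1})=\QRSK(w)$ and the definition of a dual Knuth move, and close (c) by showing every $w$ is Knuth-equivalent to $\row(\PRSK(w))$. The framing is sound, and your monotone-triple computation for the trivial case of (a) is correct (the three new cells created by consecutively inserting a monotone triple always land so that $i$ lies between $i-1$ and $i+1$ in the reading word of $\QRSK$). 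However, what you flag as ``the main obstacle'' is in fact the entire content of the theorem and is left as a sketch in two places. First, the bumping-path analysis showing $T\fromRSK a\fromRSK c\fromRSK b \;=\; T\fromRSK c\fromRSK a\fromRSK b$ together with the claim that the three new cells carry $\{i-1,i,i+1\}$ in exactly the pattern required by $D_i$ has several genuine sub-cases (the paths of $a$ and $c$ can share columns, can bump each other's already-relocated entries, can terminate at different lengths, and the path of $b$ can thread between them), and that case analysis is the bulk of the cited proofs. Second, the converse of (c) hinges on the lemma that $\row(T\fromRSK x)$ is Knuth-equivalent to $\row(T)\,x$, which is a separate nontrivial fact that you only gesture at with ``slide the largest letter.'' One small slip worth fixing: in the exchange case $acb\leftrightarrow cab$, the letter $b$ is inserted last in both words, so it is never ``in between''; the relevant comparison is between the two-step insertions $T\fromRSK a\fromRSK c$ and $T\fromRSK c\fromRSK a$ followed by a common final step. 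As a plan your proposal would succeed, but to be a proof the acknowledged case analysis must actually be carried out; this is exactly why the paper delegates the theorem to the references.
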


This well-known theorem is usually attributed to 
Edelman--Greene \cite{EG} or Haiman \cite{Haiman}. 
It takes a bit of reading to find  equivalent statements in those sources, however.
These can be found in one place in the expository reference \cite[\S4.1]{Post}.
Specifically, parts (a) and (b) are  \cite[Thm. 4.2 and Cor. 4.2.1]{Post}; see also \cite{Reif}.
Part (c) is equivalent to \cite[Thm. 6.6 and Cor. 6.15]{EG}.


\begin{remark*}
Let $T$ and $U$ be two standard tableaux of the same shape.
Then it is well-known that there exists a sequence of dual equivalence operators with $T = D_{i_1} D_{i_2}\dots D_{i_k}(U)$. 
For example:
\[
\ytab{1&2&5\\3&4}\xleftrightarrow{\ D_2,D_3\ }\ytab{1&3&5\\2&4}\xleftrightarrow{\ D_4\ }\ytab{1&3&4\\2&5}\xleftrightarrow{\ D_3\ }\ytab{1&3&4\\2&5}.
\]
This property can be deduced from Theorem~\ref{A-thm}.
 There are unique permutations $v$ and $w$ with $\PRSK(v) = T$ and $\QRSK(v) = \PRSK(w) =\QRSK(w) = U$.
 These permutations must be dual Knuth equivalent by part (c) of the theorem
 and so there exists a chain of dual Knuth moves
 \[v\dK{i_1}\cdots \dK{i_k} w.\] Then by part (b) of the theorem   
 $T = \PRSK(v) =  D_{i_1} D_{i_2}\dots D_{i_k}(\PRSK(w)) = D_{i_1} D_{i_2}\dots D_{i_k}(U).$
\end{remark*}

\subsection{Row Beissinger insertion}\label{rbs-sect}

Here and in the next section 
we consider two variants $T \fromRBS (a,b)$ and $T \fromCBS (a,b)$ of the   Schensted insertion algorithm $T \fromRSK a$.
These variants insert a pair of integers $(a,b)$ with $a\leq b$ into a partially tableau $T$.
The first operation is given below:

\begin{definition}[Row Beissinger insertion]
\label{rBS-def}
Let $(i,j)$ be the box of $T \fromRSK a$ that is not in $T$.
If $a<b$  then form $T \fromRBS (a,b)$ by adding $b$ to the end of \textbf{row} $i+1$ of $T \fromRSK a$.
If $a=b$ then form $T \fromRBS (a,b)$ by adding $b$ to the end of the first \textbf{row} of $T$.
\end{definition}

\begin{example*}
We have
$ \ytab{
 1& 2 & 3 
  \\4
}\fromRBS (5,5)
=
 \ytab{
 1& 2 & 3 & 5
  \\4
}
$
and
$
\ytab{
  1 & 4 &6
  \\3
} \fromRBS (2,5)
   = 
\ytab{
   1 & 2 &6 \\
   3  & 4\\
   5
}.$
\end{example*}

The operation $T\fromRBS(a,b)$ is identical to the one which Beissinger denotes as $T+(a,b)$ in \cite[Alg. 3.1]{Beissinger},
so we refer to it as \defn{row Beissinger insertion}. 
The motivation for this operation in \cite{Beissinger} is to describe the Robinson--Schensted correspondence restricted to involutions.
Since $\PRSK(w^{-1}) = \QRSK(w)$, we have $\PRSK(w)  =\QRSK(w)$ if and only if $w=w^{-1} \in \cI_n$.

\begin{definition}[Row Beissenger correspondence]
Given $z \in \cI_n$ let $(a_1,b_1)$,  \dots, $(a_q,b_q)$ 
be the list of pairs $(a,b) \in [n]\times[n]$ with $a \leq b = z(a)$, ordered with $b_1<\dots<b_q$,
and define 
\[
\PRBS(z) :=  \emptyset \fromRBS (a_1,b_1)\fromRBS (a_2,b_2) \fromRBS\cdots \fromRBS (a_q,b_q).
\]
\end{definition}

\begin{example*} We have
$
\PRBS(4231)=\emptyset \fromRBS (2,2)\fromRBS (3,3)\fromRBS (1,4)=\ytab{1 &  3 \\ 2 \\ 4}.
$
\end{example*}

If $a\leq b$ are arbitrary positive integers, then 
$T\fromRBS (a,b)$ may fail to be partially standard or even semistandard (this is easy to see when $a=b$).
Therefore, it is not obvious that $\PRBS(z) $ is standard.
This turns out to hold because of the particular order in
which the pairs $(a_i,b_i)$ are inserted. For example, we will only insert $(a_i,b_i)$ with $a_i=b_i$ if all  numbers in the previous tableau are smaller than $a_i$.
This sequencing allows the following to hold:

\begin{theorem}[{Beissinger \cite[Thm. 3.1]{Beissinger}}]
\label{beissinger-thm1}
If $z\in\I_n$ then $\PRBS(z) = \PRSK(z)=\QRSK(z)$.
\end{theorem}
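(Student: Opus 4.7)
The plan is to prove this by induction on $n$, tracking how the pair involving the maximum value $n$ is handled. The base case $n=0$ is vacuous. For the inductive step, let $z \in \I_n$ and consider the pair $(a_q, b_q)$ in its sorted pair list. Since $b_1 < b_2 < \cdots < b_q$ and every value of $[n]$ appears in exactly one pair, we must have $b_q = n$. There are two cases, according to whether $a_q = n$ or $a_q < n$.

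In the fixed-point case $a_q = n$, let $z' \in \I_{n-1}$ be the restriction of $z$ to $[n-1]$. Its pair list is $(a_1,b_1),\dots,(a_{q-1},b_{q-1})$, so $\PRBS(z) = \PRBS(z') \fromRBS (n,n)$, which by Definition~\ref{rBS-def} appends $n$ to the end of row 1 of $\PRBS(z')$. On the other hand, $\PRSK(z) = \PRSK(z') \fromRSK n$, and since $n$ strictly exceeds all prior entries, the Schensted insertion also simply appends $n$ to the end of row 1. Invoking the inductive hypothesis $\PRBS(z') = \PRSK(z')$ finishes this case.

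The substantive case is when $m := z(n) = a_q < n$, so that $(m,n)$ is a 2-cycle. Let $z^-$ denote the involution on $[n]\setminus\{m,n\}$ whose pair list is $(a_1,b_1),\dots,(a_{q-1},b_{q-1})$, and set $\tilde T := \PRBS(z^-)$. After relabeling $[n]\setminus\{m,n\}$ to $[n-2]$ we can apply induction to conclude that $\tilde T$ equals the corresponding $P$-tableau of $z^-$. By definition $\PRBS(z) = \tilde T \fromRBS (m,n)$, and by the standard recursion for RS we have $\PRSK(z) = \PRSK(z(1)\cdots z(n-1)) \fromRSK m$, so the goal becomes
\[
\tilde T \fromRBS (m,n) \;=\; U \fromRSK m, \qquad U := \PRSK\bigl(z(1)z(2)\cdots z(n-1)\bigr).
\]
The plan here is to describe $U$ explicitly in terms of $\tilde T$. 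The word $z(1)\cdots z(n-1)$ is obtained from the word of $z^-$ by splicing in the entry $n$ at position $m$; because $n$ is the maximum, one can track through the bumping paths and show that $U$ is obtained from $\tilde T$ by $\fromRSK$-inserting $n$ at step $m$ and then carrying out the remaining insertions of $z^-$ past position $m$, with each such later insertion possibly pushing the cell containing $n$ strictly deeper along the column of its current position. The key structural claim is that the final location of $n$ in $U$ is in column $j+1$, where $(i,j)$ is the box appended when inserting $m$ into $\tilde T$ via $\fromRSK$; equivalently, $n$ in $U$ sits at the end of the column one to the right of the column where $m$ would land in $\tilde T \fromRSK m$.

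Granting this location claim, the identity $U \fromRSK m = \tilde T \fromRBS (m,n)$ follows from a direct comparison of bumping paths: inserting $m$ into $U$ traces the same bumping chain as inserting $m$ into $\tilde T$ until the step where the chain would create the new box $(i,j)$, at which point the presence of $n$ at the end of column $j+1$ converts the terminal step into a bumping of $n$ down to row $i+1$, producing a new cell at the end of row $i+1$ exactly as prescribed by Definition~\ref{rBS-def}. The main obstacle, and where the technical content of the proof lies, is establishing the location claim for $n$ in $U$; this requires a careful induction on the number of insertions that separate position $m$ from position $n$ in the word of $z$, using the symmetry of $z$ to control how each intervening insertion interacts with the cell currently holding $n$. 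A clean alternative would be to appeal to Fomin's symmetric growth diagram for $z$, where the same identity falls out of the local growth rules along the diagonal.
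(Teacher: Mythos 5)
The paper does not give its own proof of this theorem; it simply cites Beissinger's article, so there is no in-house proof to compare against. Your inductive skeleton — peel off the pair $(m,n)$ with $n=b_q$ maximal, then compare $\tilde T \fromRBS (m,n)$ with $U\fromRSK m$ — is the right framework and matches the spirit of Beissinger's original argument. However, your key ``location claim'' is both stated incorrectly and left unproved. You assert that in $U$ the entry $n$ sits at the end of column $j+1$, where $(i,j)$ is the box added by $\tilde T \fromRSK m$; the correct location is $(i,j)$ itself. For $z=321$ one has $m=1$, $n=3$, $\tilde T = \ytab{2}$, and $\tilde T\fromRSK 1$ adds the box $(i,j)=(2,1)$, while $U = \PRSK(32) = \ytab{2 \\ 3}$ holds $n=3$ at $(2,1)$, not anywhere in column $2$. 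With $n$ placed in column $j+1$ your bumping argument also does not go through: the chain's column index in any row is weakly to the left of $j$, so the chain never touches column $j+1$.

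The deeper gap is that you explicitly leave the location claim unproved, deferring to an unspecified induction on intervening insertions or to Fomin's growth diagrams — but this claim is the entire content of the theorem, so the gap is fatal. A short route was available. The classical identity $\PRSK(w^{-1})=\QRSK(w)$ (cited from \cite{EG} in Section~\ref{sch-sect}) already gives $\PRSK(z)=\QRSK(z)$ for involutions, so only $\PRBS(z)=\PRSK(z)$ needs proof. Since $\PRSK(z)=U\fromRSK m$ and the new box is the one created at step $n$, that box must hold $n$ in $\QRSK(z)=\PRSK(z)$; hence the terminal bump when inserting $m$ into $U$ deposits $n$ there. Because $n$ is the maximum letter, its presence in $U$ cannot change the bumping chain for $m$ except at the cell where the chain in $\tilde T$ would terminate, namely $(i,j)$; so the only way the chain can deliver $n$ to the new box is if $n$ occupies $(i,j)$ in $U$. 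Once that is fixed, the bump sends $n$ to the end of row $i+1$, which reproduces Definition~\ref{rBS-def} exactly and closes the induction.
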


A row or column of a tableau is \defn{odd} if it has an odd number of boxes.

\begin{theorem}[\cite{Beissinger}]\label{prbs-thm}
The operation $\PRBS$ defines a bijection from 
$\cI_n$ to the set of standard Young tableaux with $n$ boxes. This map restricts to a bijection from 
the set of involutions in $S_n$ with $k$ fixed points to the set of standard Young tableaux
with $n$ boxes and $k$ odd \textbf{columns}.
\end{theorem}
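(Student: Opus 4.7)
The first claim is immediate from Theorem~\ref{beissinger-thm1}. Since $\PRBS(z)=\PRSK(z)=\QRSK(z)$ for every $z\in\cI_n$, and since the Robinson--Schensted correspondence $w\mapsto(\PRSK(w),\QRSK(w))$ is a bijection from $S_n$ onto pairs of equal-shape standard tableaux satisfying $\PRSK(w^{-1})=\QRSK(w)$, the condition $\PRSK(w)=\QRSK(w)$ characterizes the involutions. Consequently $\PRBS$ restricts to an injection from $\cI_n$ into the set of standard tableaux with $n$ boxes; surjectivity follows because any such tableau $T$ is realized as $\PRSK(w)$ for the unique $w\in S_n$ with Robinson--Schensted image $(T,T)$, and this $w$ must then satisfy $w=w^{-1}$.

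For the second claim, the quickest route is to invoke Sch\"utzenberger's classical theorem: for every $z\in\cI_n$, the number of fixed points of $z$ equals the number of odd-length columns of $\PRSK(z)$. Combined with Theorem~\ref{beissinger-thm1}, this yields the refinement directly. For a self-contained proof from the structure of $\PRBS$, one instead proceeds by induction on $k$, tracking $T_k:=\emptyset\fromRBS(a_1,b_1)\fromRBS\cdots\fromRBS(a_k,b_k)$. The key observation is that the ordering $b_1<b_2<\cdots<b_q$ ensures $b_k$ is strictly larger than every entry of $T_{k-1}$ at each step. If $a_k=b_k$, then $a_k$ is appended at the end of row~$1$ of $T_{k-1}$, and since row~$1$ is the longest row, this produces a new cell in a previously empty column of length one, adding exactly one odd column. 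If $a_k<b_k$, the Schensted insertion of $a_k$ adds a cell in some row~$i$ and column~$j$, and $b_k$ is then appended at position $(i+1,\,L'+1)$, where $L'$ is the length of row~$i+1$ in $T_{k-1}$; here the claim is that the total number of odd columns is unchanged.

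The crux of the inductive approach is verifying this last assertion. When $L'+1=j$, both new cells lie in column~$j$ and its length increases by two, so all column parities are preserved. When $L'+1<j$, the new cells lie in distinct columns: column~$L'+1$ has length exactly~$i$ in $T_{k-1}$, while column~$j$ has some length $c_j<i$, and one must argue that $c_j$ and $i$ have opposite parities so that the two simultaneous flips cancel. I expect this parity calculation---or, equivalently, Sch\"utzenberger's theorem itself---to be the main technical obstacle; it can be handled by a careful analysis of the Schensted bumping path that produces the position $(i,j)$, using the constraint that row $i$ of $T_{k-1}$ has length precisely $j-1$ while row $i+1$ has length $L'$, or else it can be bypassed entirely by invoking Sch\"utzenberger as a black box.
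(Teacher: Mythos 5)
Your overall approach matches the paper's exactly: invoke Theorem~\ref{beissinger-thm1} for the bijection, then track how each step $\fromRBS(a,b)$ affects the number of odd columns. The paper's proof is just the one-sentence assertion that $\fromRBS(a,b)$ preserves the odd-column count when $a<b$ and increases it by one when $a=b$, which you unpack correctly.

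The one point you flag as a potential obstacle is in fact immediate. When $L'+1<j$ you write that column~$j$ has ``some length $c_j<i$'' and that one must argue $c_j$ and $i$ have opposite parities; but the Schensted insertion of $a_k$ added the new box precisely at position $(i,j)$, which forces column~$j$ of $T_{k-1}$ to have length exactly $i-1$ (the box $(i-1,j)$ is present, the box $(i,j)$ is not). So $c_j=i-1$, and $i-1,i$ of course have opposite parities. Combined with your (correct) observation that column $L'+1$ has length exactly $i$ in $T_{k-1}$, the two new cells change the lengths of columns $j$ and $L'+1$ from $i-1,i$ to $i,i+1$ respectively, so exactly one odd column becomes even and one even column becomes odd, leaving the count unchanged. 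With that filled in, your inductive argument is complete and no appeal to Sch\"utzenberger is needed; it is the same elementary observation the paper compresses into a single sentence.
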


\begin{proof}
This follows from Theorem~\ref{beissinger-thm1} 
since the  operation $\fromRBS (a,b)$ preserves the number of odd columns when $a<b$ and 
increases the number of odd columns by one when $a=b$.
\end{proof}

In view of this result, it is natural to introduce a relation $\simRBS{i}$ on $\cI_n$ for each $1<i<n$,
defined by requiring that $y\simRBS{i} z$ if and only if $\PRBS(y) = D_i(\PRBS(z))$.
The following shows that $\simRBS{i}$ is the same as what is called an \defn{involutive transformation} in \cite[\S4.2]{Post}.

\begin{lemma}\label{simrbs-lem}
Let $y,z\in \cI_n$. Then $y\simRBS{i}z$ if and only if
 $y\K{i}w\dK{i}z$ for some $w \in S_n$.
\end{lemma}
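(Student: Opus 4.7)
The plan is to reduce the statement to Theorem~\ref{A-thm} by using Theorem~\ref{beissinger-thm1}, which identifies $\PRBS(y)$ with both $\PRSK(y)$ and $\QRSK(y)$ whenever $y$ is an involution. Unpacking the definition, $y\simRBS{i}z$ means $\PRBS(y)=D_i(\PRBS(z))$, and for involutions this is the same as the joint assertion $\PRSK(y)=D_i(\PRSK(z))$ and $\QRSK(y)=D_i(\QRSK(z))$ (which are the same equation duplicated, since $\PRSK=\QRSK$ on $\cI_n$).

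For the reverse implication, I would assume $y\K{i}w\dK{i}z$ and apply parts (a) and (b) of Theorem~\ref{A-thm} to chain the tableau identities: $\PRSK(y)=\PRSK(w)=D_i(\PRSK(z))$. Composing with Theorem~\ref{beissinger-thm1} yields $\PRBS(y)=D_i(\PRBS(z))$, so $y\simRBS{i}z$.

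For the forward implication, suppose $y\simRBS{i}z$, so $\PRSK(y)=D_i(\PRSK(z))$. Since $D_i$ preserves shape, $\PRSK(y)$ and $\QRSK(z)=\PRSK(z)$ have a common shape; hence by the bijectivity of the RS correspondence there is a unique $w\in S_n$ with $\PRSK(w)=\PRSK(y)$ and $\QRSK(w)=\QRSK(z)$. I then need to verify the two halves of the zigzag using Theorem~\ref{A-thm}. For $y\K{i}w$, the $P$-condition is immediate from the choice of $w$, and the $Q$-condition reads
\[
\QRSK(y)=\PRSK(y)=D_i(\PRSK(z))=D_i(\QRSK(z))=D_i(\QRSK(w)),
\]
where the first equality uses that $y$ is an involution. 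For $w\dK{i}z$, the $Q$-condition holds by construction, and the $P$-condition is the chain $\PRSK(w)=\PRSK(y)=D_i(\PRSK(z))$.

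The only subtle point, and the single step that could trip one up, is showing that the candidate $w$ actually exists, which requires knowing that $\PRSK(y)$ and $\QRSK(z)$ share a shape; this is immediate from the shape-preservation of $D_i$, so no serious obstacle remains. Everything else is a straightforward bookkeeping of the equivalences afforded by Theorems~\ref{A-thm} and~\ref{beissinger-thm1}.
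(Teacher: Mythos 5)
Your proof is correct and follows essentially the same route as the paper: apply parts (a) and (b) of Theorem~\ref{A-thm} to get the reverse implication directly, and for the forward implication construct $w$ as the unique permutation with $\PRSK(w)=\PRSK(y)$ and $\QRSK(w)=\QRSK(z)$ and then verify the two Knuth/dual-Knuth relations. You spell out the existence of $w$ (via shape preservation of $D_i$) a bit more carefully than the paper does, but the underlying argument is identical.
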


\begin{proof}
If  $y\K{i}w\dK{i}z$ then $\PRSK(y)=\PRSK(w)=D_i(\PRSK(z))$ by Theorem~\ref{A-thm} so  $y\simRBS{i}z$.
Conversely, if  $y\simRBS{i}z$ so that $\PRSK(y)=\QRSK(y)=D_i(\PRSK(z))=D_i(\QRSK(z))$,
then  $y\K{i}w\dK{i}z$ for the element $w\in S_n$ with
$\PRSK(w)=\PRSK(y)$ and $\QRSK(w)=\QRSK(z)$ by Proposition~\ref{A-thm}.
\end{proof}

If $y \in \I_n$ and $1<i<n$ then  $y \simRBS{i} z$ 
for a unique $z \in \I_n$, which has this characterization:

\begin{theorem}\label{direct-prop} If $y,z\in \cI_n$ have $y\simRBS{i}z$   and 
$A:=\{i-1,i,i+1\}$ then
\[
z = \begin{cases} y &\text{if $y(A) \neq A$ and $y(i)$ is between $y(i-1)$ and $y(i+1)$}
\\
 (i-1,i)y (i-1,i) &\text{if $y(A) \neq A$ and $y(i+1)$ is between $y(i-1)$ and $y(i)$}
\\
(i,i+1)y (i,i+1) &\text{if $y(A) \neq A$ and $y(i-1)$ is between $y(i)$ and $y(i+1)$ }
\\
 (i-1,i+1)y(i-1,i+1) &\text{if $y(A) = A$}.
\end{cases}
\]
\end{theorem}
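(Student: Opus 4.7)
The plan is to invoke Lemma~\ref{simrbs-lem} to rewrite the relation $y \simRBS{i} z$ as a two-step chain $y \K{i} w \dK{i} z$ for some intermediate permutation $w \in S_n$ (not necessarily an involution), and then to compute $w$ and $z$ explicitly by tracking the effect of one Knuth move followed by one dual Knuth move. The critical input is the identity $y^{-1} = y$, which implies that the positions of the values $i-1, i, i+1$ in $y$ are precisely $y(i-1), y(i), y(i+1)$. Thus the Knuth pattern of the triple $(p, q, r) := (y(i-1), y(i), y(i+1))$ governing the first move is directly linked to the dual Knuth pattern that determines the second move.

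For the first step, a case split on the relative order of $p, q, r$ gives the following: if $q$ lies strictly between $p$ and $r$, then $pqr$ is monotonic and $w = y$; if $r$ lies strictly between $p$ and $q$, then the Knuth move swaps entries at positions $i-1, i$ of $y$, so $w = y s_{i-1}$; and if $p$ lies strictly between $q$ and $r$, then it swaps positions $i, i+1$, giving $w = y s_i$. These three sub-cases exhaust all possibilities because $p, q, r$ are distinct. For the second step, one examines the triple $(w^{-1}(i-1), w^{-1}(i), w^{-1}(i+1))$ recording the positions of the values $i-1, i, i+1$ inside $w$. When $y(A) \neq A$, a direct computation shows that this triple carries the same relative pattern as $(p, q, r)$, so the dual Knuth move swaps the positions of the two extreme values among $\{i-1, i, i+1\}$. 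The net effect on $y$ is left-multiplication by the same simple transposition that appeared on the right in the first step, yielding $z = s_{i-1} y s_{i-1}$ in Case 2, $z = s_i y s_i$ in Case 3, and $z = y$ in Case 1.

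When $y(A) = A$, the restriction $y|_A$ is one of the four involutions of $A$, and an exhaustive tabulation over these four possibilities verifies that the composition $y \K{i} w \dK{i} z$ returns $z = (i-1, i+1)\, y\, (i-1, i+1)$ in every case; the formula simplifies correctly in particular when $y|_A$ is the identity or the long transposition, both of which are fixed by this conjugation. The main obstacle is the bookkeeping in the sub-cases where $\{p, q, r\}$ overlaps $A$ partially without equaling it: each overlap is forced by the involutive condition (for instance, $p = i$ forces $q = i-1$, and $p = i+1$ forces $r = i-1$), and one must confirm in each configuration that the triple $(w^{-1}(i-1), w^{-1}(i), w^{-1}(i+1))$ still exhibits the Knuth pattern predicted by $(p, q, r)$, so that the formulas of Cases 2 and 3 continue to hold. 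A clean organizational scheme is to establish the result first in the generic situation $y(A) \cap A = \emptyset$, where the positions indexed by $A$ and those carrying the values $i-1, i, i+1$ are disjoint and the argument is immediate from the Knuth/dual Knuth analysis, then handle the extremal case $y(A) = A$ by the four-way check, and finally verify the remaining partial-overlap configurations using the coincidences forced by $y^2 = 1$.
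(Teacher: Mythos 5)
Your proposal follows essentially the same route as the paper's proof: reduce via Lemma~\ref{simrbs-lem} to a chain $y \K{i} w \dK{i} z$, use the involutive identity $y^{-1}=y$ to observe that the Knuth pattern of $(y(i-1),y(i),y(i+1))$ simultaneously governs the dual Knuth move, and then track the effect of $s_{i-1}$ or $s_i$ on both sides while splitting off $y(A)=A$ for a finite check. The one spot where the paper is crisper is the partial-overlap analysis you flag as "the main obstacle": rather than enumerating overlap configurations, the paper notes that the relative order of $i-1,i,i+1$ in the one-line notation of $ys_{i-1}$ (resp.\ $ys_i$) can only differ from that of $y$ if $\{y(i-1),y(i)\}$ (resp.\ $\{y(i),y(i+1)\}$) equals the pair of extremes $\{a,c\}$ of $y(A)$ and lies inside $A$, which forces $y(A)=A$, so the partial-overlap case simply never interferes. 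Your scheme would work, but you should be aware that the claim "a direct computation shows that this triple carries the same relative pattern as $(p,q,r)$" is not unconditionally true — it genuinely fails when both $i-1$ and $i$ (resp.\ $i$ and $i+1$) lie in $y(A)$ — and your proof must show this situation is incompatible with $y(A)\neq A$ together with the assumed between-ness of $y(i+1)$ (resp.\ $y(i-1)$), exactly as the paper does.
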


\begin{remark*}
Besides the first case, we can also have $y=z$ in the fourth case if $y$ restricts
to either the identity permutation or reverse permutation of $\{i-1,i,i+1\}$.
\end{remark*}

\begin{proof}
The \defn{arc diagram} of $y\in \cI_n$ is the matching on $[n]$ whose edges give the cycles of $y$.
This is typically drawn so that the vertices corresponding to $1,2,3,\dots,n$ are arranged from left to right.
The theorem can be derived by inspecting \cite[Figure 4.11]{Post}, which 
lists the ways that the arc diagrams of $y$ and $z$ can differ if $y\simRBS{i} z \neq y$.
Translating \cite[Figure 4.11]{Post} into our formulation is not entirely straightforward, so we include a self-contained proof below.

First suppose $y(A)=A$ and define $z=(i-1,i+1)y(i-1,i+1)$. 
Then the sequence $y(i-1)y(i)y(i+1)$ is either 
\[(i-1)i(i+1), \quad (i+1)i(i-1), \quad i(i-1)(i+1), \quord (i-1)(i+1)i\]
and it is straightforward to check that 
we have
$y\K{i}y\dK{i}y=z$ in the first two cases,
$y\K{i}ys_i\dK{i}z$ in the third case, and
$y\K{i}ys_{i-1}\dK{i}z$ in the last case.
Thus, by Lemma~\ref{simrbs-lem}, we conclude that $y\simRBS{i}z$ as needed.

For the rest of this proof we assume   $y(A)\neq A$. Define $a<b<c$ to be the numbers with $\{a,b,c\} = \{y(i-1),y(i),y(i+1)\}=y(A)$.
If $y(i)$ is between $y(i-1)$ and $y(i+1)$, then the sequence $y(i-1)y(i)y(i+1)$ is either $abc$ or $cba$ so $y\K{i}y= y^{-1} \dK{i} y^{-1} =y$ and $y\simRBS{i} y$ as claimed.

Suppose instead that $y(i+1)$ is between $y(i-1)$ and $y(i)$.
Then $y(i-1)y(i)y(i+1)$ is either $cab$ or $acb$ so $y\K{i}ys_{i-1}$ and $y\dK{i} s_{i-1} y$.
The relative order of $i-1$, $i$, and $i+1$ in the one-line notation of $ys_{i-1}$
 can only differ from that of $y$ if $\{y(i-1),y(i)\} = \{a,c\}\subset \{i-1,i,i+1\}$,
 which would require us to have $y(A) = \{a<b<c\} = \{i-1< i<i+1\}=A$.
As we assume $y(A) \neq A$, 
the relation $y\dK{i} s_{i-1} y$ implies that $ys_{i-1} \dK{i} s_{i-1} y s_{i-1}$,
so $y\simRBS{i} s_{i-1}y s_{i-1}$ as claimed.

Finally, suppose 
 $y(i-1)$ is between $y(i)$ and $y(i+1)$. Then $y(i-1)y(i)y(i+1)$ is either $bca$ or $bac$ so $y\K{i}ys_{i}$
 and $y\dK{i} s_{i}y$. 
Now the relative order of $i-1$, $i$, and $i+1$ in the one-line notation of $ys_{i}$
 can only differ from that of $y$ if $\{y(i),y(i+1)\} = \{a,c\}\subset \{i-1,i,i+1\}$,
 which again would force us to have $y(A) = A$.
Therefore $y\dK{i} s_{i} y$ implies that $ys_{i} \dK{i} s_{i} y s_{i}$
so  $y\simRBS{i} s_{i}y s_{i}$.
\end{proof}

\subsection{Column Beissinger insertion}\label{cbs-sect}

 Again suppose $T$ is a partially standard tableau
and $a\leq b$ are integers.  
Changing ``row'' to ``column'' in Definition~\ref{rBS-def} gives the following insertion operation, which is the main topic of this section:

\begin{definition}[Column Beissinger insertion]
\label{cRS-def}
Let $(i,j)$ be the box of $T \fromRSK a$ that is not in $T$.
If $a<b$  
then form $T \fromCBS (a,b)$ by adding $b$ to the end of \textbf{column} $j+1$ of $T \fromRSK a$.
If $a=b$ then form $T \fromCBS (a,b)$   by adding $b$ to the end of the first \textbf{column} of $T$.
\end{definition}

\begin{example*}
We have
$ \ytab{
  1&2 & 3 
  \\4
}\fromCBS (5,5)
=
 \ytab{
  1&2 & 3 
  \\4
  \\ 5
}
$
and
$
\ytab{
  1 & 4 &6
  \\3
} \fromCBS (2,5)
    = 
\ytab{
   1 & 2  &6\\
   3  & 4 & 5 
}.$
\end{example*}

 By symmetry we refer to $\fromCBS$  
as
\defn{column Beissinger insertion}, although this operation is not considered in \cite{Beissinger}
and does not appear to have been studied previously.

\begin{definition}[Column Beissinger correspondence]
Given $z \in \cI_n$, let $(a_1,b_1)$, \dots, $(a_q,b_q)$ 
be the list of pairs $(a,b) \in [n]\times[n]$ with $a \leq b = z(a)$, ordered with $b_1<\dots<b_q$,
and define 
\[
\PCBS(z) :=  \emptyset \fromCBS (a_1,b_1) \fromCBS (a_2,b_2)\fromCBS\cdots \fromCBS (a_q,b_q).
\]
\end{definition}

\begin{example*}
We have $\PCBS(4231)=\emptyset \fromCBS (2,2)\fromCBS (3,3)\fromCBS (1,4)=\ytab{1 &  4 \\ 2 \\ 3}.$
\end{example*}

As with row Beissinger insertion, for an arbitrary pair $a\leq b$ the tableau $T \fromCBS (a,b)$ may fail to be partially standard.
Thus, the fact that $\PCBS(z)$ is always standard, which is part of Theorem~\ref{pcbs-thm} below, depends on the particular order in which the pairs $(a_i,b_i)$
are inserted in the preceding definition.

There does not seem to be any simple relationship between $\PRBS(z) $ and $\PCBS(z)$.
Nevertheless, we will see that the formal properties of the map $\PCBS$ closely parallel those of $\PRBS$.

One can perform \defn{inverse Schensted insertion} starting from any corner box in
a partially tableau $T$
 to obtain another partially tableau $U$ and an integer $x$
such that $T = U\fromRSK x$. Here $U$ and $x$ are uniquely determined by 
requiring that $T = U\fromRSK x$ and that the shape of $U$ be the shape of $T$ with the relevant corner box deleted.
The explicit algorithm starts by
removing the corner box, say with entry $c$ in row $k+1$, and inserting $c$ into row $k$. We replace the last entry $b<c$ in row $k$ by $c$ and then insert $b$ into row $k-1$ by the same procedure (replacing the last entry $a<b$ by $b$, then inserting $a$ into row $k-2$, and so on).
We continue in this way  to form $U$, and let $x$ be the entry replaced in the first row.

\begin{theorem}\label{pcbs-thm}
The operation $\PCBS$ defines a bijection from 
$\cI_n$ to the set of standard Young tableaux with $n$ boxes. This map restricts to a bijection from 
the set of involutions in $S_n$ with $k$ fixed points to the set of standard Young tableaux
with $n$ boxes and $k$ odd \textbf{rows.}
\end{theorem}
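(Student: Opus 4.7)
The plan is to prove this directly, in parallel to the proof of Theorem~\ref{prbs-thm} but without relying on Beissinger's identification $\PRBS(z)=\PRSK(z)$, which has no column analogue. The argument splits into three parts: (a) show $\PCBS(z)$ is always a standard tableau; (b) construct an inverse map via reverse Schensted insertion, giving bijectivity; and (c) track how each $\fromCBS$-step changes the number of odd rows.

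For (a), I would induct on $k$, the number of pairs inserted, using the crucial fact that $b_k$ strictly dominates every entry already present in the intermediate tableau (since $b_k>b_{k-1}\geq a_{k-1}$, etc.). When $a_k=b_k$, the new box is appended to the bottom of column~$1$, which is always possible because $b_k$ is the largest entry and the first vacant position in column~$1$ lies in a genuinely empty row (forced by the Young-diagram shape). When $a_k<b_k$, if Schensted insertion of $a_k$ creates a new box at $(i,j)$, then the pre-existing column $j+1$ has length $\ell\leq i-1$, so position $(\ell+1,j+1)$ is a legitimate outer corner of the intermediate shape; placing the maximum value $b_k$ there preserves both row- and column-monotonicity.

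For (b), given a standard tableau $T'$ with $N$ boxes and largest entry $N$, I would proceed as follows. If $N$ lies in column~$1$, remove it and record the fixed pair $(N,N)$. Otherwise $N$ lies in some column $j+1\geq 2$ and must be at the bottom of that column; remove it and run inverse Schensted insertion starting from the bottom box of column~$j$, which is a removable corner of the intermediate shape and coincides with the box added by the forward Schensted step, to recover an entry $a<N$ and record the pair $(a,N)$. Iterating yields pairs which, reindexed by increasing second coordinate as $(a_1,b_1),\ldots,(a_q,b_q)$ with $b_1<\cdots<b_q$, exhaust $[n]$ without repetition (since the distinct tableau entries do), and hence define a unique involution $z\in\cI_n$ with $\PCBS(z)=T'$. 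For (c), the fixed-point case creates a brand-new row of length~$1$, increasing the odd-row count by exactly one; in the case $a<b$, the two added boxes lie at $(i,j)$ and $(\ell+1,j+1)$ with $\ell+1\leq i$, and either $\ell+1=i$ (in which case a single row's length changes from $j-1$ to $j+1$, preserving its parity) or $\ell+1<i$ (in which case two distinct rows change length, from $j-1$ to $j$ and from $j$ to $j+1$, flipping parities oppositely). Hence the odd-row count is preserved in every non-fixed-point step, so the total number of odd rows in $\PCBS(z)$ equals the number of fixed-point insertions, i.e., the number of fixed points of $z$.

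The main obstacle I anticipate is the invertibility in (b): one must verify that the bottom box of column~$j$ really is the correct corner from which to run inverse Schensted insertion, i.e., that it is a removable corner of the intermediate shape and that it coincides with the box added by the forward Schensted step of $\fromCBS(a,b)$. This rests on the shape inequality $\ell+1\leq i$ established in (a) together with the fact that every column contains at most one removable corner, namely its bottom box. Once this point is settled, bijectivity follows formally and (c) reduces to the routine case analysis sketched above.
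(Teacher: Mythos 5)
Your proposal is correct and takes essentially the same approach as the paper's proof: construct the inverse algorithm by removing the largest entry and running inverse Schensted insertion from the bottom box of the adjacent (preceding) column, then track how each $\fromCBS$-step affects the odd-row count. Your part (a), which explicitly verifies that $\PCBS(z)$ is always a standard tableau via the outer-corner shape inequality $\ell+1\leq i$, supplies a well-definedness check that the paper leaves implicit, and your case analysis in (c) spells out the parity bookkeeping that the paper states in one line.
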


\begin{proof}
We show that $\PCBS$ is a bijection by constructing the inverse algorithm. 
Suppose $T$ is a partially standard tableau with $n$ boxes.
Find the largest entry $b$ in $T$. If this is in the first column, then let $a:=b$ and delete this box to form a smaller tableau $U$. 
Otherwise,  let $x$ be the entry in $T$ that is at the end of the column preceding $b$.
Delete the box of $T$ containing $b$ to form a tableau $\tilde T$. Then $x$ is an a corner box of $\tilde T$ so we can 
 do inverse Schensted insertion starting from $x$ to obtain a tableau $U$ and an integer $a$ such that $\tilde T \fromRSK a = U$. 
In either case we obtain a partially standard tableau $U$ and a pair of integers $a\leq b$ with $T = U \fromCBS (a,b)$.

If we apply this operation successively to a standard tableau $T$ with $n$ boxes,
then we obtain a sequence of pairs $(a_i, b_i)$ with $a_i \leq b_i$ 
and
$T = \emptyset \fromCBS (a_1,b_1) \fromCBS (a_2,b_2)\fromCBS\cdots \fromCBS (a_q,b_q)$.
By construction these pairs satisfy $b_i<b_j$ and $\{a_i,b_i\} \cap \{a_j,b_j\} = \varnothing$ for all $i<j$
while having $[n] = \{a_1,b_1,a_2,b_2,\dots,a_q,b_q\}$.
Hence there is a unique involution $y \in \cI_n$ whose disjoint (but possibly trivial) cycles are $(a_i,b_i)$ for $i \in [q]$
and this element has $\PCBS(y) = T$.
The map $\PCBS^{-1}(T) := y$ is the two-sided inverse of $\PCBS$.

The reason why $\PCBS$ turns  fixed points into odd rows, finally, is
because   $\fromCBS (a,b)$ preserves the number of odd rows when $a<b$ and 
increases the number of odd rows by one when $a=b$.
\end{proof}

Continuing our parallel stories, for each $1<i<n$ let $\simCBS{i}$ be the relation on $\cI_n$ 
with $y\simCBS{i} z$ if and only if $\PCBS(y) = D_i(\PCBS(z))$.
For any $y \in \I_n$ and $1<i<n$ there is a unique $z \in\I_n$ with $y \simCBS{i} z$.
This element has a slightly more complicated characterization than Theorem~\ref{direct-prop}. 

\begin{theorem}\label{n-cb-thm}
 Suppose $y,z\in \cI_n$ have $y\simCBS{i}z$ for some $1<i<n$. For $j \in \{i-1,i,i+1\}$ 
let 
\be\label{upsilon-def}
\ymark(j) := \begin{cases} 
y(j)&\text{if } y(j) \notin \{i-1,i,i+1\} \\
-j &\text{if }y(j)=j \\
j &\text{if } j \neq y(j) \in \{i-1,i,i+1\}.
\end{cases}
\ee
Then it holds that
\[
z = \begin{cases} y &\text{if $\ymark(i)$ is between $\ymark(i-1)$ and $\ymark(i+1)$}
\\
 (i-1,i)y(i-1,i) &\text{if $\ymark(i+1)$ is between $\ymark(i-1)$ and $\ymark(i)$}
\\
(i,i+1)y(i,i+1) &\text{if $\ymark(i-1)$ is between $\ymark(i)$ and $\ymark(i+1)$.}
\end{cases}
\]
\end{theorem}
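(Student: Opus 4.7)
The plan is to parallel the argument behind Theorem~\ref{direct-prop}, but the column setting requires more refined bookkeeping because column Beissinger insertion treats fixed points (appended to the first column) quite differently from transpositions (whose two values end up in adjacent columns). As a first step I would establish a column analog of Lemma~\ref{simrbs-lem}: characterize $y \simCBS{i} z$ by the existence of an intermediate permutation $w$ and some modified pair of Knuth / dual-Knuth–type relations that correspond to column Beissinger insertion. A clean way to do this is to understand directly how $D_i$ acts on the row reading word of $\PCBS(y)$, in terms of the positions of the boxes containing $i-1$, $i$, and $i+1$; the operator $D_i$ only depends on the cyclic order of these three positions, so one only needs to track whether the box containing $i$ is the leftmost, middle, or rightmost of the three (reading bottom-to-top, left-to-right).

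The bulk of the proof is then a case analysis on the restriction of $y$ to $A := \{i-1, i, i+1\}$: each of $y(i-1)$, $y(i)$, $y(i+1)$ is either a fixed point, swapped with another element of $A$, or matched with an element outside $A$. For each configuration I would peel off the cycles of $y$ involving values $>i+1$ using induction (these are inserted last and only modify the tableau in a way compatible with both $D_i$ and the prescribed transformation $y \mapsto z$), then directly compute the positions of the boxes containing $i-1, i, i+1$ in $\PCBS(y)$. The role of the function $\ymark$ defined by \eqref{upsilon-def} is exactly to collapse these subcases into a single uniform statement: assigning $-j$ to a fixed point ensures that boxes placed at the bottom of the first column sort "before" those produced by bumping, while assigning $j$ to an $A$-internal match reflects the fact that such a cycle contributes two adjacent boxes in a predictable column, so the effective comparison reduces to comparing the positional index $j$.

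The main obstacle will be the mixed cases, where $y|_A$ fixes or swaps some elements of $A$ while matching others outside $A$: for instance, $i$ a fixed point with $i-1$ and $i+1$ in cycles leaving $A$, or $\{i,i+1\}$ a cycle with $i-1$ matched outside. In these configurations, the three boxes of interest are created at different insertion steps, and the column-appending rule interleaves their placements with those of intervening cycles; one must carefully verify that conjugating by $s_{i-1}$ or $s_i$ on the involution side corresponds exactly to the swap produced by $D_i$ on the tableau side. This delicate case-by-case verification is presumably the reason the full argument is deferred to Appendix~\ref{app-sect}.
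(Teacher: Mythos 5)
Your high-level plan matches the paper's Appendix~\ref{app-sect} argument: work directly at the tableau level by tracking where $i-1$, $i$, $i+1$ land in $\PCBS(y)$, split into cases according to how $y$ acts on $\{i-1,i,i+1\}$, and dispose of later insertions by showing they commute with $D_i$ (the paper's Lemma~\ref{n-lem2} plays exactly the ``peel off the tail'' role you describe, though it is phrased as a commutation lemma rather than an induction).

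Two caveats. First, your opening suggestion to seek a ``column analog of Lemma~\ref{simrbs-lem}''--- characterizing $y\simCBS{i}z$ via a pair of modified Knuth/dual-Knuth relations through an intermediate permutation $w$ --- is a detour the paper does not take, and I would not expect it to work cleanly: column Beissinger insertion does not arise from a restriction of a bijection on all of $S_n$ the way $\PRBS=\PRSK$ does, so there is no obvious Knuth-theoretic characterization of $\simCBS{i}$. You correctly pivot away from this in the same paragraph, but that pivot is doing more work than you credit. Second, and more substantively, the technical engine of the actual proof is a pair of bumping-path lemmas (the paper's Lemma~\ref{n1-lem}, asserting persistence of relative row-reading order across later insertions, and Lemma~\ref{n2-lem}, comparing the bumping paths and final rows of consecutive cycle insertions depending on the order of the inserted values). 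Your proposal says only that one should ``directly compute the positions of the boxes,'' but without those comparison lemmas the roughly fifteen subcases (interleaved fixed points, $A$-internal swaps, and matches leaving $A$, with cycles inserted in various relative orders) cannot be resolved, because the positions of $i-1$, $i$, $i+1$ depend on insertions happening before and between them, not just after. The plan is sound, but the hard content --- exactly what the appendix supplies --- is still all ahead of you.
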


Our proof of this result is quite technical. We postpone the details
of the argument to Appendix~\ref{app-sect} in order to avoid sidetracking our present discussion.

Comparing $\PRBS$ and $ \PCBS$ suggests an interesting
operation $\PhiBS$ on involutions: for each $y \in \I_n$ there is a unique $z \in \I_n$ such that
$\PRBS(y) = \PCBS(z)^\top$, where $\top$ denotes the transpose, and we define $\PhiBS(y) := z$.
Since taking transposes turns odd rows into odd columns,
the map $\PhiBS$ is a permutation of $\I_n$ which preserves each element's number of fixed points,
or equivalently which preserves the $S_n$-conjugacy classes in $\I_n$.
In addition, $\PhiBS$ commutes with the natural inclusion $\I_n \hookrightarrow \I_{n+1}$
adding $n+1$ as a fixed point to each element of $\I_n$.
For example, if $n=4$ then
\[
\ba \PhiBS:\ \  &1\mapsto 1,\quad (1,2)\mapsto(1,2),\quad
(1,3) \mapsto (2,3) \mapsto(1,3),\quad  
(1,4) \mapsto (2,4) \mapsto (3,4) \mapsto(1,4),\quad
\\&(1, 2)(3, 4) \mapsto (1, 3)(2, 4) \mapsto (1, 4)(2, 3) \mapsto (1, 2)(3, 4).
\ea
 \]
For large $n$ this map is fairly mysterious.
Its longest cycles for $n=1,2,3,\dots,10$ have sizes $1$, $1$, $2$, $3$, $12$, $15$, $46$, $131$, $630$, $1814$
 and in general $\PhiBS$ is very close to a derangement:
 
 \begin{proposition}\label{only-fixed-prop}
The only fixed points of $\PhiBS: \cI_n \to \cI_n$ are $1$ and $s_1=(1,2)$.
 \end{proposition}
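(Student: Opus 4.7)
The plan is strong induction on $n$, analyzing the first step of the inverse algorithms $\PRBS^{-1}$ and $\PCBS^{-1}$ (as in the proof of Theorem~\ref{pcbs-thm} and its row analogue) applied respectively to $T := \PRBS(y)$ and $T^\top = \PCBS(y)^\top$ for a putative fixed point $y$. The base cases $n \leq 2$ follow from the direct computations giving $\PhiBS(1) = 1$ and $\PhiBS(s_1) = s_1$. For $n \geq 3$, let $n$ lie at position $(\rho, \kappa)$ in $T$. Both inverse algorithms must extract the same pair containing $n$, and they dichotomize based on whether $\rho = 1$.

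If $\rho = 1$, both algorithms extract $(n, n)$, the reduced tableaux satisfy
$U_R = T \setminus \{(1,\kappa)\} = (T^\top \setminus \{(\kappa,1)\})^\top = U_C^\top$,
and the reduced problem concerns a fixed point $y' \in \cI_{n-1}$ by the compatibility of $\PhiBS$ with the inclusion $\cI_{n-1} \hookrightarrow \cI_n$ (stated just before the proposition); the inductive hypothesis gives $y' \in \{1, s_1\}$ and hence $y \in \{1, s_1\}$. The task therefore reduces to ruling out $\rho \geq 2$. In that case both algorithms start inverse insertions from the same entry $x$ at the corner box $(\rho - 1, \lambda_{\rho-1})$ of $\tilde T := T \setminus \{(\rho, \kappa)\}$ and both extract a pair $(a, n)$ with $a < n$; however, $\PRBS^{-1}$ computes $a$ via inverse Schensted \emph{row} insertion on $\tilde T$ from $x$, whereas $\PCBS^{-1}$ computes it via inverse row insertion on $\tilde T^\top$, equivalently inverse Schensted \emph{column} insertion on $\tilde T$ from $x$.

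The key step is to establish the following lemma: for any partially standard tableau $\tilde T$ with at least two boxes and any corner box $(r, c)$ of $\tilde T$, inverse row and inverse column Schensted insertions from $(r, c)$ yield distinct output integers $a_R \neq a_C$. Applied to our $\tilde T$, which has $n - 1 \geq 2$ boxes for $n \geq 3$, this contradicts the fixed-point condition and rules out $\rho \geq 2$. The proof of the lemma invokes the Schensted bumping lemma and its reverse: the inverse row chain from $(r, c)$ terminates at some box $(1, c_R)$ with $c_R \geq c$, while the inverse column chain terminates at $(r_C, 1)$ with $r_C \geq r$. Since $\tilde T$ has more than one box, the corner $(r, c) \neq (1, 1)$, so either $c \geq 2$ (forcing $c_R \geq 2$) or $r \geq 2$ (forcing $r_C \geq 2$); in either case $(1, c_R) \neq (r_C, 1)$ as positions, and because $\tilde T$ has distinct entries the extracted values $a_R = \tilde T(1, c_R)$ and $a_C = \tilde T(r_C, 1)$ must differ. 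The main subtlety to pin down will be the monotonicity direction of the bumping chains for the column-insertion variant (dual to the familiar row statement) and the translation to its reverse, together with a careful check that the two extractions genuinely issue from the same box $x$ of $\tilde T$.
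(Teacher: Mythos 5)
Your argument is correct and takes a genuinely different route from the paper's. The published proof (due to Lewis) works forward: for $w\neq 1$ it picks the maximal index $j$ with $a_j<b_j$, tracks the position of the letter $a_j$ (or of $2$, when $a_j=1$ and $b_j>2$) in the two partial tableaux $T_j$ and $U_j$, and shows directly that these tableaux must differ — no induction, no inverse algorithm. Your proof instead runs backward, peeling off the largest entry $n$ from $T=\PRBS(y)$ and $T^\top=\PCBS(y)$, and rests on a general lemma about inverse Schensted insertion: from a common corner box $(r,c)\neq(1,1)$ of a partially standard tableau, inverse row insertion terminates at a box $(1,c_R)$ with $c_R\geq c$ while inverse column insertion terminates at $(r_C,1)$ with $r_C\geq r$, so the extracted values $\tilde T(1,c_R)\neq\tilde T(r_C,1)$; this forces $y(n)=n$ and sets up the induction via the compatibility of $\PhiBS$ with the inclusion $\I_{n-1}\hookrightarrow\I_n$. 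I have checked all the pieces: the two inverse algorithms really do start the bumping chain from the same box $(\rho-1,\lambda_{\rho-1})$ after deleting $n$; the monotonicity of the inverse bumping chains is the standard one; and the corner $(\rho-1,\lambda_{\rho-1})$ cannot equal $(1,1)$ when $n\geq 3$. The paper's argument is shorter and avoids induction; yours isolates a clean, reusable observation about the inverse bumping paths (that row and column uninsertion from the same corner of a tableau with $\geq 2$ boxes always disagree), which is arguably of independent interest. Two minor expository slips, neither affecting correctness: you write $T^\top=\PCBS(y)^\top$ where you mean $T^\top=\PCBS(y)$, and you cite the identities $\PhiBS(1)=1$, $\PhiBS(s_1)=s_1$ only as "base cases" when in fact they hold for all $n$ (and are needed to conclude that $1$ and $s_1$ actually are fixed points, not merely the only candidates).
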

 
 This result was a conjecture in an earlier version of this article.
 The following proof was shown to us by Joel Lewis.
 
 \begin{proof}
 We have $\PhiBS(1) = 1$ since
 $\PRBS(12\cdots n) = \PCBS(12\cdots n)^\top = \ytab{1&2&\cdots &n}
$, while $\PhiBS(s_1) = s_1$ since $\PRBS(213\cdots n) = \PCBS(213\cdots n)^\top = \ytab{1&3&\cdots &n \\ 2}.$
  To prove that $\PhiBS$ has no other fixed points, 
choose $w \in I_n$ and 
let $(a_1,b_1)$, $(a_2,b_2)$, \dots, $(a_q,b_q)$ 
be the integer pairs $(a,b)$ with $1\leq a \leq b = w(a)\leq n$, ordered such that $b_1<b_2<\dots<b_q$.
For each $i \in [q]$ let
\[ T_i:=  \emptyset \fromRBS (a_1,b_1)\fromRBS(a_2,b_2) \fromRBS\cdots \fromRBS (a_i,b_i)\] 
Also define $\fromCBSalt$ as in Remark~\ref{eqnat-rmk} and let
\[
 U_i:=  \emptyset \fromCBSalt (a_1,b_1)\fromCBSalt (a_2,b_2) \fromCBSalt\cdots \fromCBSalt (a_i,b_i).
\]
Then we have $\PRBS(w) = T_q$ and $\PCBS(w)^\top = U_q$.

Assume $w\neq 1$. Then there is a maximal
 $j \in [q]$  with $a_i<b_j$. 
The tableau $T_q$ is formed from $T_j$ by adding $b_{j+1},b_{j+2},\dots,b_q$ to the end of the first row, while $U_q$ is formed in the same way from $U_j$.
We therefore have $T_q=U_q$ if and only if $T_j = U_j$.

Further suppose $a_j > 1$.
Since $T_q$ and $U_q$ are standard, the number $1$ must be present in both $T_{j-1}$ and $U_{j-1}$. 
As $T_{j-1}$ and $U_{j-1}$ are partially standard, the number $1$ is necessarily in box $(1,1)$ of both tableaux.
But this means that $a_j$ will appear in the \textbf{first row but not the first column} of $T_j = T_{j-1} \fromRBS (a_j,b_j)$,
since $T_j$ is formed by row inserting $a_j$ into $T_{j-1}$ and then adding an extra box containing $b_j$.
On the other hand, $a_j$ will appear in the \textbf{first column but not the first row} of $U_j = U_{j-1} \fromCBSalt U_{j-1}$
since $U_j$ is formed by column inserting $a_j$ into $U_{j-1}$ and then adding $b_j$. Thus $T_j \neq U_j$ so also $T_q \neq U_q$.

Instead suppose $a_j=1$ but $b_j>2$. 
Then the number $2$ must appear in box $(1,1)$ of both  $T_{j-1}$ and $U_{j-1}$
since these tableaux are partially standard and do not contain $1$.
Therefore, when we form $T_j = T_{j-1} \fromRBS (1,b_j)$
our row insertion of $1$ will bump $2$ to box $(2,1)$,
but when we form 
$U_j = U_{j-1} \fromCBSalt (1,b_j)$
our column insertion of $1$ will bump $2$ to box $(1,2)$.
Thus we again have $T_j \neq U_j$ as these tableaux contain the number $2$ in different positions $(2,1)\neq (1,2)$. We conclude as before that $T_q \neq U_q$.

In the remaining case when $a_j=1$ and $b_j=2$,
 the maximality of $j$ implies that $w=s_1$.
Hence if $w\notin \{1,s_1\}$ then $\PRBS(w) = T_q$ is distinct from $\PCBS(w)^\top = U_q$, so $\PhiBS(w)\neq w$.
 \end{proof}

\section{Molecules in Gelfand $W$-graphs}\label{3-sect}

In this section we explain how the row and column Beissenger insertion algorithms 
are related to certain \defn{$W$-graphs} (for $W=S_n$) studied in \cite{MZ}.
The latter objects are derived from a pair of Iwahori-Hecke algebra modules 
described in Section~\ref{gel-sect}.

\subsection{Iwahori-Hecke algebras and $W$-graphs}\label{w-sect}

We briefly review some general background material from \cite[Chapter 7]{Humphreys}.
The \defn{Iwahori-Hecke algebra} $\H=\H(W)$ of an arbitrary Coxeter system $(W,S)$ with length function $\ell : W \to \NN$
is the $\ZZ[x,x^{-1}]$-algebra with basis $\{ H_w : w \in W\}$ satisfying 
\[ H_s H_w = \begin{cases} H_{sw} &\text{if }\ell(sw) > \ell(w) \\ 
H_{sw} + (x-x^{-1}) H_w &\text{if }\ell(sw) < \ell(w)\end{cases}
\quad\text{for }s \in S\text{ and } w\in W.\]
The unit of this algebra is $H_1=1$.
There is a unique ring involution of $\H$, written $h \mapsto \overline h$ and called the \defn{bar operator},
such that $\overline{x} = x^{-1}$ and $\overline{H_{s}} = H_{s}^{-1} = H_s -(x-x^{-1})$ for all $s \in S$.
More generally, an \defn{$\H$-compatible bar operator} for an $\H$-module $\cA$ is a $\ZZ$-linear map $\cA\to \cA$,
 also written $a \mapsto \overline{a}$, such that $\overline{ha} = \overline{h}\cdot\overline{a}$
for all $h \in \H$ and $a \in \cA$.

Following the conventions in \cite{Stembridge}, we define a \defn{$W$-graph} 
  to be a triple $\Gamma=(V,\omega,\tau)$  consisting of a set $V$ with maps
 $\omega : V\times V \to \LL$
and $ \tau : V \to \{\text{ subsets of $S$ }\}$
such that the free $\LL$-module with basis $\{ Y_v : v \in V \}$ has a left $\H$-module structure 
   in which
  \be\label{hy-eq}
    H_s Y_v = \begin{cases} x Y_v &\text{if }s \notin \tau(v) \\ 
 -x^{-1} Y_v +  \ds\sum_{ \substack{w \in V \\ s\notin \tau(w)}} \omega(v,w) Y_w &\text{if }s \in \tau(v)
  \end{cases}
\quad\text{for all $s \in S$ and $ v \in V$.}\ee
We view $\Gamma$ as a weighted digraph 
with edges $v \xrightarrow{\omega(v,w)} w$ for each $v,w \in V$ with $\omega(v,w)\neq 0$.

\begin{remark}\label{reduced-rmk}
The values of $\omega(v,w)$ when $\tau(v) \subseteq \tau(w)$ play no role in the formula \eqref{hy-eq}.
Thus, when considering the problem of classifying $W$-graphs,
it is natural to impose the further condition (called \defn{reducedness} in \cite{Stembridge}) that $\omega(v,w) = 0$ if $\tau(v) \subseteq \tau(w)$.
Although we adopted this convention in \cite{MZ}, we omit it here. This simplifies some formulas.
\end{remark}

\begin{example}\label{kl-ex}
The \defn{left and right Kazhdan--Lusztig $W$-graphs}
are described as follows.
The \defn{Kazhdan--Lusztig basis} of $\H $ is the unique set of elements $\{ \underline H_w : w \in W\}$
satisfying \be\label{kl-eq}
\underline H_w = \overline{\underline H_w} \in H_w + \sum_{\substack{y \in W \\ \ell(y)<\ell(w)}} x^{-1} \ZZ[x^{-1}]  H_y.\ee
The uniqueness of this set can be derived by the following simple argument.
Since for any $w \in W$ we have $\overline{H_w} \in H_w + \sum_{\ell(y) < \ell(w)} \ZZ[x,x^{-1}] H_y$,
the only element of $  x^{-1}\ZZ[x^{-1}]\spanning\{ H_y : y \in W \}$
that is bar invariant is zero. But if $C_w \in \H$ has
$C_w =\overline{C_w}  \in H_w + \sum_{\ell(y) < \ell(w)} x^{-1} \ZZ[x^{-1}] H_y$
then the difference $\underline H_w - C_w$ is such a bar invariant element, so we must have $\underline H_w = C_w$.

Let $h_{yw} \in \ZZ[x^{-1}]$ be the unique polynomials such that $\underline H_w = \sum_{y \in W} h_{yw} H_y$
and define $\mu_{yw}$ to be the coefficient of $x^{-1}$ in $h_{yw}$.  
It turns out that $h_{yw} = 0$ unless $y \leq w$ in the \defn{Bruhat order} on $W$ \cite[\S7.9]{Humphreys},
so it would be equivalent to define $\underline H_w $ as the unique element of $\H$ with 
 \[\underline H_w = \overline{\underline H_w} \in H_w + \sum_{\substack{y \in W \\ y<w}} x^{-1} \ZZ[x^{-1}]  H_y.\]
 This formulation is more common in the literature than \eqref{kl-eq}, but \eqref{kl-eq} will serve as a slightly better prototype for our definitions in the next section.
 
 Finally, set $ \omega_{\mathsf{KL}}(y,w) = \mu_{yw} + \mu_{wy}$ for $y, w \in W$
and define 
\[
\Asc_L(w) := \{ s \in S : \ell(sw) > \ell(w)\}
\quand
\Asc_R(w) := \{ s \in S : \ell(ws) > \ell(w)\}.
\]
The triples $(W, \omega_{\mathsf{KL}}, \Asc_L)$
and $(W,\omega_{\mathsf{KL}},\Asc_R)$ are both $W$-graphs,
whose associated $\H$-modules \eqref{hy-eq} are isomorphic to the left and right regular representations of $\H$
 \cite[Thm. 1.3]{KL}.
The edge weights of these $W$-graphs are actually nonnegative; in fact, one has $h_{yw} \in \NN[x^{-1}]$ \cite[Cor. 1.2]{EliasWilliamson}.
\end{example}

From this point on, we specialize to the case when
$\H = \H(S_n)$,
where  $S_n$ is viewed as a Coxeter group with simple generating set
$S = \{s_1,s_2,\dots,s_{n-1}\}$.
If we set $x=1$ then $\H$ becomes the group ring $\ZZ S_n$ and any $\H$-module 
becomes an $S_n$-representation.
We say that an $\H$-module $\cA$ is a \defn{Gelfand model} if the character of this specialization 
is the multiplicity-free sum of all irreducible characters of $S_n$.
This is equivalent to saying that $\cA$ is isomorphic to the direct sum of all isomorphism classes 
of irreducible $\H$-modules when the scalar ring $\ZZ[x,x^{-1}]$ is extended to the field $\QQ(x)$;
see the discussion in \cite[\S1.2]{MZ}.

\subsection{Gelfand models}\label{gel-sect}

We now review the construction of two Gelfand models for $\H=\H(S_n)$. The bases of these models 
are indexed by the images of two 
natural embeddings  $\cI_n \hookrightarrow \Ifpf_{2n}$ to be denoted $\mmap$ and $\nmap$.
Let $\onefpf$ be the permutation of $\ZZ$ sending $i \mapsto i -(-1)^i$.
Choose $w\in \cI_n$ and let $c_1<c_2<\cdots<c_q$ be the numbers $c \in [n]$ with $w(c)=c$.
Both $\mmap(w)$ and $\nmap(w)$ will be elements of $\Ifpf_{2n}$
sending \[
 i \mapsto w(i)\text{ for $i \in [n]\setminus\{c_1,c_2,\dots,c_q\}$}
 \quand 
 i \mapsto \onefpf(i)\text{ for $i \in [2n]\setminus[n+q]$.}\] 
  The only difference between these two permutations is that we define
\[
\mmap(w) : c_i \leftrightarrow n+i
\quand 
\nmap(w) : c_i \leftrightarrow n+q+1-i
\qquad\text{for all }i \in [q].
\]
 We refer to $\mmap$ as the \defn{ascending embedding}, since it turns each of $n+1,n+2,\dots,n+q-1$ into ascents,
 and to $\nmap$ as the \defn{descending embedding}. Both maps are injective.
 Finally let
\be
\cGm_n := \{ \mmap(w) : w\in\cI_n\}
\quand \cGn_n := \{ \nmap(w) : w\in\cI_n\}.\ee
The set $\cGm_n$ consists of the elements $z \in \Ifpf_{2n}$ with no \defn{visible descents} greater than $n$,
where an integer $i$ is a visible descent of $z$ if $z(i+1) < \min\{i,z(i)\}$ \cite[Prop. 2.9]{MP2022}.

\begin{example}If $n=4$ and $w=(1,3)$ then $\mmap(w) = (1,3)(2,5)(4,6)(7,8)$
 and $\nmap(w) = (1,3)(2,6)(4,5)(7,8)$. 
 Is it useful to draw involutions  in $S_n$ as matchings on $[n]$ with edges corresponding to $2$-cycles.
Our examples are given in terms of such pictures as
\[\ba
\mmap\ \ :\ \
 \begin{tikzpicture}[xscale=0.6,yscale=1,baseline=(a.base)]
  \tikzstyle{every node}=[draw=none,shape=circle,inner sep=1pt];
  \node at (0,0) (a) {$1$};
  \node at (1,0) (b) {$2$};
  \node at (2,0) (c) {$3$};
  \node at (3,0) (d) {$4$};
\draw[-]  (a) to [bend left] (c);
\end{tikzpicture}
\ \ \mapsto
\ \
 \begin{tikzpicture}[xscale=0.6,yscale=1,baseline=(a.base)]
  \tikzstyle{every node}=[draw=none,shape=circle,inner sep=1pt];
  \node at (0,0) (a) {$1$};
  \node at (1,0) (b) {$2$};
  \node at (2,0) (c) {$3$};
  \node at (3,0) (d) {$4$};
  \node at (4,0) (e) {$5$};
  \node at (5,0) (f) {$6$};
  \node at (6,0) (g) {$7$};
  \node at (7,0) (h) {$8$};
\draw[-]  (a) to [bend left] (c);
\draw[-,thick]  (b) to [bend left] (e);
\draw[-,thick]  (d) to [bend left] (f);
\draw[-,thick]  (g) to [bend left] (h);
\end{tikzpicture}\ ,
\\
\nmap\ \ :\ \
 \begin{tikzpicture}[xscale=0.6,yscale=1,baseline=(a.base)]
  \tikzstyle{every node}=[draw=none,shape=circle,inner sep=1pt];
  \node at (0,0) (a) {$1$};
  \node at (1,0) (b) {$2$};
  \node at (2,0) (c) {$3$};
  \node at (3,0) (d) {$4$};
\draw[-]  (a) to [bend left] (c);
\end{tikzpicture}
\ \ \mapsto
\ \
 \begin{tikzpicture}[xscale=0.6,yscale=1,baseline=(a.base)]
  \tikzstyle{every node}=[draw=none,shape=circle,inner sep=1pt];
  \node at (0,0) (a) {$1$};
  \node at (1,0) (b) {$2$};
  \node at (2,0) (c) {$3$};
  \node at (3,0) (d) {$4$};
  \node at (4,0) (e) {$5$};
  \node at (5,0) (f) {$6$};
  \node at (6,0) (g) {$7$};
  \node at (7,0) (h) {$8$};
\draw[-]  (a) to [bend left] (c);
\draw[-,thick]  (b) to [bend left] (f);
\draw[-,thick]  (d) to [bend left] (e);
\draw[-,thick]  (g) to [bend left] (h);
\end{tikzpicture}\ .
\ea
\]
\end{example}

For each fixed-point-free involution $z \in \Ifpf_{2n}$ define 
\be
\ba
\Des^=(z) &:= \{ i\in[n-1] : i+1=z(i)>z(i+1)=i\},
\\
\Asc^=(z) &:= \{ i \in [n-1] : z(i) > n \text{ and } z(i+1)>n\}.
\ea
\ee
We refer to elements of these sets as \defn{weak descents} and \defn{weak ascents}.
\begin{remark*}
An index $i \in [n-1]$ belongs to $ \Des^=(z)$ if and only if $z$ commutes with $s_i=(i,i+1)$.
Note that if the involution $z$ belongs to either $\cGm_n$ or $\cGn_n$, then $i \in [n-1]$ is contained in $\Asc^=(z)$
if and only if $zs_iz \in \{s_{n+1}, s_{n+2},\dots,s_{2n-1}\}$.
Finally, observe that if $i \in \Asc^=(z)$
then we have $z(i) < z(i+1)$ when $z \in \cGm_n$ but $z(i) > z(i+1)$ when $z \in \cGn_n$.
\end{remark*}
For  $z \in \Ifpf_{2n}$  we also define
\be
\ba
\Des^<(z) &:= \{ i \in [n-1] : z(i) > z(i+1)\}\backslash(\Asc^=(z) \sqcup  \Des^=(z)),\\
\Asc^<(z) &:= \{ i \in [n-1] : z(i) < z(i+1)\}\backslash \Asc^=(z).
\ea
\ee
The elements of these sets are \defn{strict descents} and \defn{strict ascents}.
Write $\ell : S_n \to \NN$ for the length function
with 
$\ell(w) = |\Inv(w)|$ where $\Inv(w):=\{ (i,j) \in [n]\times [n]: i<j\text{ and }w(i)>w(j)\}$.

\begin{proposition}\label{embed-des-prop}
If $z \in \I_n$ has $k$ fixed points then $\ell(\mmap(z)) + k(k-1) = \ell(\nmap(z))$ and 
\[
\ba \Des^=(\mmap(z)) &=\Des^=(\nmap(z)),
\\
\Asc^=(\mmap(z)) &=\Asc^=(\nmap(z)),
\ea
\qquad
\ba \Des^<(\mmap(z)) &=\Des^<(\nmap(z)),
\\
\Asc^<(\mmap(z)) &=\Asc^<(\nmap(z)).
\ea
\]
\end{proposition}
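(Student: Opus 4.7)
The plan is to observe that the two embeddings $\mmap(z)$ and $\nmap(z)$ agree away from the orbits of the fixed points of $z$ and differ only in the bijection between those orbits, so that a position-by-position case analysis reduces both claims to routine checks. Fix $z \in \cI_n$ with fixed points $c_1 < c_2 < \cdots < c_k$, and set $A := \{c_1,\dots,c_k\}$ and $B := \{n+1,\dots,n+k\}$. By construction, $\mmap(z)$ and $\nmap(z)$ agree with $z$ on $[n]\setminus A$, with $\onefpf$ on $[2n]\setminus[n+k]$, and both restrict to bijections $A \to B$ and $B \to A$; only the bijection $A\to B$ differs. In particular, for any $i \in [2n]$ the truth value of ``$\mmap(z)(i) > n$'' equals that of ``$\nmap(z)(i) > n$,'' and whenever the common value is at most $n$, one has $\mmap(z)(i) = \nmap(z)(i)$.

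For the descent/ascent equalities, I would verify the four sets separately. The weak sets are immediate: $i \in \Asc^=(\mmap(z))$ iff both $i$ and $i+1$ are fixed points of $z$, which is manifestly a property of $z$ alone, and $i \in \Des^=(\mmap(z))$ iff $z$ swaps $i$ and $i+1$, which is likewise independent of embedding. For the strict sets, I would case-split $i \in [n-1]$ based on whether $i$ and $i+1$ are fixed points of $z$. If neither is a fixed point, then $\mmap(z)(i) = \nmap(z)(i) = z(i)$ and similarly at $i+1$, so the comparison is identical in both embeddings. If exactly one of $i, i+1$ is a fixed point, then exactly one of the two values exceeds $n$ while the other is at most $n$, forcing the direction of the comparison to be the same in both embeddings. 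If both are fixed points, then $i \in \Asc^=$, which lies in neither $\Asc^<$ nor $\Des^<$.

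For the length formula, I plan to partition $\Inv(\mmap(z))$ and $\Inv(\nmap(z))$ according to how many of the two coordinates lie in $A \cup B$. Pairs with no coordinate in $A \cup B$ contribute equally since the embeddings agree there. For pairs with exactly one coordinate in $A \cup B$, although the individual values $\mmap(z)(i)$ and $\nmap(z)(i)$ may differ when $i \in A \cup B$, the image sets coincide: $\mmap(z)(A) = \nmap(z)(A) = B$ and $\mmap(z)(B) = \nmap(z)(B) = A$. Summing the inversion condition over the coordinate in $A \cup B$ then yields the same count for both embeddings, since that sum depends only on the image set and not on the particular bijection.

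Pairs with both coordinates in $A \cup B$ are computed directly. Since $\mmap(z)$ sends $c_1 < \cdots < c_k$ to $n+1 < \cdots < n+k$ in order, no pairs of the form $(c_i,c_j)$ or $(n+i,n+j)$ with $i<j$ invert, while all $k^2$ cross pairs $(c_i, n+j)$ invert because $\mmap(z)(c_i) = n+i > n \geq c_j = \mmap(z)(n+j)$. In contrast, $\nmap(z)$ reverses both orderings, so each of the $\binom{k}{2}$ pairs of the form $(c_i,c_j)$ and of the form $(n+i,n+j)$ now inverts, while the $k^2$ cross pairs still invert for the same reason. The total difference is $2\binom{k}{2} = k(k-1)$, yielding the stated identity. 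The main bookkeeping point — and the only place where care is required — is the ``exactly one coordinate in $A \cup B$'' contribution; this is the only step that uses the image-set equality in a nontrivial way, and is where one could most easily go astray.
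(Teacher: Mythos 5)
Your proof is correct, and its partition-and-count structure is actually more careful than the paper's own sketch. The paper claims that $\Inv(\nmap(z))$ is literally the disjoint union of $\Inv(\mmap(z))$ with the $2\binom{k}{2}$ pairs $(i,j)$, $i<j$, having both coordinates among the fixed points of $z$ or both in $\{n+1,\dots,n+k\}$; but the two inversion sets need not be nested. For example, with $n=4$ and $z=(2,4)$, so $c_1=1$, $c_2=3$, one gets $\mmap(z)=54621387$ and $\nmap(z)=64523187$; the pair $(4,5)$ is an inversion of $\mmap(z)$ but not of $\nmap(z)$, while $(4,6)$ is an inversion of $\nmap(z)$ but not of $\mmap(z)$. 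Your decomposition by the number of coordinates in $A\cup B$, together with the observation that for each fixed coordinate $r\notin A\cup B$ the relevant inversion count into $A\cup B$ depends only on $z(r)$ (or $\onefpf(r)$) and the common image sets $\mmap(z)(A)=\nmap(z)(A)=B$ and $\mmap(z)(B)=\nmap(z)(B)=A$, sidesteps the nesting claim and gives a correct count in the ``exactly one in $A\cup B$'' case. Your explicit enumeration of the case where both coordinates lie in $A\cup B$ (which is where the $k(k-1)$ discrepancy actually arises) and the routine verification of the four descent/ascent set equalities then complete the proof.
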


\begin{proof}
If $c_1<c_2<\dots<c_k$ are the fixed points of $z \in I_n$ in $[n]$, then
$\Inv(\nmap(z))$ is the disjoint union of $ \Inv(\mmap(z))$ with 
the set of pairs $(i,j) \in [2n]\times [2n]$ with $i < j$ and 
either $i,j \in \{c_1,c_2,\dots,c_k\}$ or $i,j \in \{n+1,n+2,\dots,n+k\}$,
so
$ \ell(\nmap(z)) = \ell(\mmap(z)) + 2\binom{k}{2}$.
Checking the listed equalities between the weak/strict descent/ascent sets is straightforward.
\end{proof}

The next two theorems summarize the type A case of  a few of the main results from \cite{MZ}.

 \begin{theorem}[{\cite[Thms. 1.7 and 1.8]{MZ}}]\label{cM-thm}
Let $\H=\H(S_n)$ and define $\cM$ to be the free $\LL$-module with basis $\{ M_z : z \in \cGm_n\}$.
There is a unique $\H$-module structure on $\cM$ in which
\[
H_s M_z = \begin{cases}
M_{szs}  &\text{if }i \in \Asc^<(z)\\
M_{szs} + (x-x^{-1}) M_z &\text{if }i \in \Des^<(z) \\
-x^{-1} M_z &  \text{if }i \in \Asc^=(z) \\
x M_z& \text{if }i \in \Des^=(z)
\end{cases}
\quad
\text{for $s=s_i \in \{s_1,s_2,\dots,s_{n-1}\}$.
}\]
This $\H$-module has the following additional properties:
\ben
\item[(a)]
$\cM$ is a Gelfand model for $\H$.
\item[(b)] $\cM$ has a unique $\H$-compatible bar operator with $\overline{M_z} =  M_z$ whenever $\Des^<(z) =\varnothing$.
\item[(c)] $\cM$ has a unique  basis $\{ \underline M_z : z \in \cGm_n\}$
with
$\ds \underline M_z = \overline{ \underline M_z} \in M_z + \sum_{\ell(y)<\ell(z)} x^{-1} \ZZ[x^{-1}] M_y$. 
\een
\end{theorem}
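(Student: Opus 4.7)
The plan is to split the statement into four parts and attack them in order: the existence of the $\H$-module structure, the Gelfand model property (a), the bar operator (b), and the Kazhdan--Lusztig-like basis (c). Since this theorem is attributed to \cite{MZ}, the proof in the present article would likely be a citation, but a direct proof proceeds as follows.

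First I would verify that the four-case formula for $H_{s_i}M_z$ genuinely defines an $\H$-module action, which amounts to checking the quadratic relation $(H_s - x)(H_s + x^{-1}) = 0$ and the braid/commutation relations. The quadratic relation is trivial in the two weak cases and reduces to a short $2\times 2$ calculation on $\{M_z, M_{s_i z s_i}\}$ in the two strict cases. The far commutation $H_{s_i}H_{s_j} = H_{s_j}H_{s_i}$ for $|i-j|\geq 2$ is immediate because the rule at $s_i$ only inspects the window $\{i,i+1\}$ of the one-line notation of $z$, and the windows for $s_i$ and $s_j$ are disjoint. The braid relation $H_{s_i}H_{s_{i+1}}H_{s_i} = H_{s_{i+1}}H_{s_i}H_{s_{i+1}}$ is the substantive check: one stratifies by the joint type of $(i-1,i)$, $(i,i+1)$, $(i+1,i+2)$ in $\{\Des^=, \Asc^=, \Des^<, \Asc^<\}$ for both sides and computes. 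I expect this case analysis to be the main obstacle, as each of the many configurations produces a small linear combination of the $M_{z'}$ for $z'$ in the orbit of $z$ under $\langle s_i, s_{i+1}\rangle$-conjugation, and both sides must agree.

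For (a), I would specialize $x = 1$, producing a $\ZZ S_n$-module, and verify that its character equals $\sum_{\lambda \vdash n} \chi^\lambda$. One clean route is to construct an explicit $\ZZ S_n$-equivariant isomorphism from $\cM|_{x=1}$ to a known Gelfand model for $S_n$ indexed by $\cI_n$ (such as Adin--Postnikov--Roichman's), using the bijection $\mmap : \cI_n \to \cGm_n$ to transfer the basis. Alternatively one can compute characters on conjugacy classes using the action formula and fixed-point counting in $\cGm_n$.

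For (b), uniqueness is immediate from the action formula: if $s_i \in \Des^<(z)$ then $H_{s_i} M_{s_i z s_i} = M_z + (x-x^{-1}) M_{s_i z s_i}$, so $\overline{M_z}$ is forced by $\overline{M_{s_i z s_i}}$ (which has strictly smaller $\ell$). Bar-compatibility at short elements thus propagates uniquely to all $z$. Existence requires consistency of this propagation across different choices of strict descent, which follows from the braid relation verification in step one. For (c), I would apply the standard Kazhdan--Lusztig argument: build $\underline M_z$ by induction on $\ell(z)$, subtracting an $\LL$-combination of previously constructed $\underline M_y$ from $M_z$ to cancel the non-bar-invariant part; the triangularity of this procedure gives both existence and uniqueness as in the argument recalled in Example~\ref{kl-ex}.
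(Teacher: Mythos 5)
The paper does not prove this theorem: it is stated with a citation to [MZ, Thms.~1.7 and 1.8] and then used as a black box. Your recognition of that fact, followed by a direct-proof sketch, is thus a genuinely different route from the one the paper takes. The outline is reasonable, but two details need correction.

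In part (b) your displayed relation is wrong. If $i \in \Des^<(z)$ then $s_izs_i$ is shorter and one checks $i \in \Asc^<(s_izs_i)$ (the strict cases go to strict cases under conjugation by $s_i$, since $z$ is fixed-point-free); the action formula therefore gives $H_{s_i} M_{s_izs_i} = M_z$ with no $(x-x^{-1})$ term. The extra $(x-x^{-1})M_{s_izs_i}$ you wrote belongs in $H_{s_i}M_z$, not in $H_{s_i}M_{s_izs_i}$. The uniqueness propagation still works via $\overline{M_z} = \overline{H_{s_i}}\,\overline{M_{s_izs_i}} = (H_{s_i}-(x-x^{-1}))\overline{M_{s_izs_i}}$, so the idea is sound, but the stated formula is incorrect.

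Your claim that far commutation $H_{s_i}H_{s_j} = H_{s_j}H_{s_i}$ for $|i-j|\geq 2$ is "immediate because the windows are disjoint" is also overstated. Since $z$ is an involution, conjugating by $s_j$ changes the values $z(i)$, $z(i+1)$ whenever one of them lies in $\{j,j+1\}$, so the $\Asc/\Des$ type of $i$ can differ between $z$ and $s_jzs_j$. The relation still holds — for example, in the configuration $\{z(i),z(i+1)\} = \{j,j+1\}$ one finds $s_izs_i = s_jzs_j$, which exactly cancels the apparent mismatch of the $(x-x^{-1})$ correction terms — but this needs to be checked rather than asserted. As you note yourself, the braid relation is the real work; a brute-force case analysis should succeed, though [MZ] organizes it more structurally. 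Parts (a) and (c) are correctly outlined.
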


Replacing $\cGm_n$ by $\cGn_n$ and 
 $x$ by $-x^{-1}$ changes Theorem~\ref{cM-thm} to the following:

\begin{theorem}[{\cite[Thms. 1.7 and 1.8]{MZ}}]\label{cN-thm}
Let $\H=\H(S_n)$ and define $\cN$ to be the free $\LL$-module with basis $\{ N_z : z \in \cGn_n\}$.  There is a unique $\H$-module structure on $\cN$ in which
\[
H_s N_z = \begin{cases}
N_{szs}  &\text{if }i \in \Asc^<(z)\\
N_{szs} + (x-x^{-1}) N_z &\text{if }i \in \Des^<(z) \\
x N_z &  \text{if }i \in \Asc^=(z) \\
-x^{-1} N_z& \text{if }i \in \Des^=(z)
\end{cases}
\quad
\text{for $s =s_i\in \{s_1,s_2,\dots,s_{n-1}\}$.
 }\]
This $\H$-module has the following additional properties:
\ben
\item[(a)]
$\cN$ is a Gelfand model for $\H$.
\item[(b)] $\cN$ has a unique $\H$-compatible bar operator with $\overline{N_z} =  N_z$ whenever $\Des^<(z) =\varnothing$.
\item[(c)] $\cN$ has a unique  basis $\{ \underline N_z : z \in \cGn_n\}$
with
$\ds \underline N_z = \overline{ \underline N_z} \in N_z + \sum_{\ell(y)<\ell(z)} x^{-1} \ZZ[x^{-1}] N_y$. 
\een
\end{theorem}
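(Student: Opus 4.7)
The plan is to deduce Theorem~\ref{cN-thm} from Theorem~\ref{cM-thm} via the $\ZZ$-algebra automorphism $\sigma : \H \to \H$ defined by $\sigma(x) = -x^{-1}$ and $\sigma(H_s) = H_s$ for all $s \in S$; this is well-defined since $\sigma$ fixes $x - x^{-1}$ and hence preserves the quadratic relation $H_s^2 = (x-x^{-1}) H_s + 1$. Form the twisted $\H$-module $\cM^\sigma$, with the same underlying abelian group as $\cM$ and action $h *_\sigma m := \sigma(h) \cdot m$. A direct computation shows that, relative to the scalar action $*_\sigma$, the $H_s$-action on $M_z$ in $\cM^\sigma$ is given by exactly the formula of Theorem~\ref{cN-thm}: the substitution $x \leftrightarrow -x^{-1}$ swaps the $\Asc^=$ and $\Des^=$ coefficients while leaving the $M_{szs}$ and $(x-x^{-1})$ terms invariant. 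Define an $\LL$-linear bijection $\Psi: \cM^\sigma \to \cN$ by $M_{\mmap(w)} \mapsto N_{\nmap(w)}$ for each $w \in \cI_n$. Combining Proposition~\ref{embed-des-prop} (which gives equality of the four ascent and descent sets under $\mmap$ and $\nmap$) with the identity $\nmap(s_i w s_i) = s_i \nmap(w) s_i$ for all strict ascents and descents $i$, one sees that $\Psi$ intertwines the $H_s$-actions, making $\cN$ a well-defined $\H$-module.

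The remaining assertions transfer through $\Psi$. For (a), $\cM^\sigma \otimes_\LL \QQ(x)$ is the twist of $\cM \otimes_\LL \QQ(x)$ by the $\QQ(x)$-algebra automorphism $\sigma$, hence is also a multiplicity-free sum of all irreducible $\H_{\QQ(x)}$-modules (the twist just permutes them), so $\cN$ is a Gelfand model. For (b), the bar operator on $\cM$ commutes with $\sigma$ on $\H$ (one checks $\overline{\sigma(x)} = \sigma(\overline{x})$ and $\overline{\sigma(H_s)} = \sigma(\overline{H_s})$), so it descends to an $\H$-compatible bar on $\cM^\sigma$ and via $\Psi$ to one on $\cN$ that fixes $N_z$ precisely when $\Des^<(z) = \varnothing$. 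For (c), the standard existence-and-uniqueness lemma for bar-invariant bases applies once one establishes the triangularity $\overline{N_z} - N_z \in \LL \cdot \operatorname{span}\{N_y : \ell(y) < \ell(z)\}$; this follows by induction on $\ell(z)$, the base case $\Des^<(z) = \varnothing$ being immediate and the inductive step using the $H_{s_i}$-action formula for $i \in \Des^<(z)$ to reduce to strictly shorter elements.

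The main obstacle is the combinatorial verification that $\nmap(s_i w s_i) = s_i \nmap(w) s_i$ in $S_{2n}$ for all strict ascents and descents $i$. The subtlety is that $\nmap$ assigns the fixed points $c_1 < c_2 < \cdots < c_q$ of $w$ to $n+q, n+q-1, \ldots, n+1$ in \emph{reverse} order, so whenever conjugation by $s_i$ alters the fixed-point set (which happens when exactly one of $i, i+1$ is fixed by $w$), the effect on this reversed matching must be tracked carefully. A case analysis splitting on whether neither, exactly one, or both of $i$ and $i+1$ are fixed points of $w$ reduces the identity to a direct verification in each case.
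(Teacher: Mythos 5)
Your proposal is correct but takes a genuinely different route from the paper. The paper's ``proof'' of Theorem~\ref{cN-thm} is essentially a citation: Remark~\ref{cN-rmk} states that \cite[Thms.\ 1.7 and 1.8]{MZ} already prove a $\cGm_n$-indexed version of $\cN$ with exactly the multiplication rule in Theorem~\ref{cN-thm}, and the passage to $\cGn_n$-indexing goes through the $\LL$-linear bijection $N_{\mmap(z)} \mapsto N_{\nmap(z)}$ justified by Proposition~\ref{embed-des-prop}. You instead internalize the hint in the preceding sentence of the paper (``Replacing $\cGm_n$ by $\cGn_n$ and $x$ by $-x^{-1}$'') and turn it into an actual derivation of Theorem~\ref{cN-thm} from Theorem~\ref{cM-thm} via the ring automorphism $\sigma(x) = -x^{-1}$, $\sigma(H_s) = H_s$. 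This is a more self-contained argument: given Theorem~\ref{cM-thm}, it constructs $\cN$ outright rather than quoting a second result from \cite{MZ}. The twist bookkeeping checks out---$\sigma$ fixes $x-x^{-1}$ and swaps the $\Asc^=$/$\Des^=$ coefficients, it commutes with the bar operator so (b) and (c) transfer, and the $\QQ(x)$-extension argument handles (a)---provided one is careful that $\Psi$ is only $\sigma$-semilinear relative to the ordinary $\LL$-structures, which is exactly right for a map out of the twisted module. Both your argument and the paper's quietly rely on the same combinatorial fact, namely that at a strict ascent or descent $i$ of $\mmap(w)$ one has $s_i\,\mmap(w)\,s_i = \mmap(s_i w s_i)$ and $s_i\,\nmap(w)\,s_i = \nmap(s_i w s_i)$; you correctly identify this as the nontrivial step and sketch the case split on whether $0$, $1$, or $2$ of $\{i,i+1\}$ are fixed by $w$ (the cases where $2$ are fixed, or where $(i,i+1)$ is a cycle, are excluded since they are weak ascents/descents). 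The paper leaves this implicit behind ``in view of Proposition~\ref{embed-des-prop},'' so your exposition, while not fully carried out, is if anything more explicit about what remains to be checked.
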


\begin{remark}\label{cN-rmk}
The cited results in \cite{MZ} describe an $\H$-module $\cN$ with the same multiplication rule but with $\cGm_n$ rather than $\cGn_n$ as a basis.
Theorem~\ref{cN-thm} still follows directly from \cite[Thms. 1.7 and 1.8]{MZ} 
in view of Proposition~\ref{embed-des-prop}.
Specifically, the module $\cN$ in \cite{MZ} is isomorphic to our version of $\cN$ via the $\ZZ[x,x^{-1}]$-linear map
sending $N_{\mmap(z)} \mapsto N_{\nmap(z)}$ for $z \in \I_n$. 
\end{remark}

The module $\cM$ for $\H=\H(S_n)$ 
was first studied by Adin, Postnikov, and Roichman in \cite{APR}.
The results in \cite{Marberg,MZ,Zhang} give more general constructions of $\cM$ and $\cN$ for classical Weyl groups and affine type A.
Despite the formal similarities between Theorem~\ref{cM-thm} and \ref{cN-thm},
there does not appear to be any simple relationship between
the ``canonical'' bases $\{ \underline M_z\} \subset \cM$ and $\{ \underline N_z\}\subset \cN$.

By mimicking Example~\ref{kl-ex}, one can turn the modules $\cM$ and $\cN$ into $W$-graphs for 
the symmetric group $W=S_n$.
Let $\m_{yz}, \n_{yz} \in \ZZ[x^{-1}]$ 
be the polynomials indexed by $y,z \in \cGm_n$ and $y,z \in \cGn_n$, respectively, such that
$ \underline M_z = \sum_{y \in \cGm_n} \m_{yz} M_y$ and 
$  \underline N_z = \sum_{y \in \cGn_n} \n_{yz} N_y.$
Write
$ \mu^\m_{yz}$
and
$\mu^\n_{yz} $
for the coefficients of $x^{-1}$ in $\m_{yz}$ and $\n_{yz}$.
For $z \in\cGm_n$ define
\be\label{ascm-eq}\ba
\mAsc(z) :=\{ s_i : i \in  \Asc^<(z) \sqcup \Asc^=(z)\}
&= \{ s_i : i \in[n-1]\text{ and }z(i)<z(i+1)\}
\\& = \{ s_i : i \in[n-1]\text{ and }\ell(z) < \ell(s_izs_i)\}.\ea\ee
For $z \in\cGn_n$ define
\be\label{ascn-eq}\ba \nAsc(z) &:= \{ s_i : i \in \Asc^<(z) \sqcup \Des^=(z)\}
\\&\ = \{ s_i : i \in[n-1]\text{ and }z(i)<z(i+1)\text{ or }z(i)=i+1\}
\\&\ = \{ s_i : i \in[n-1]\text{ and }\ell(z) \leq \ell(s_izs_i)\}.\ea
\ee
Then let $\mOmega : \cGm_n \times \cGm_n \to \ZZ$  and $\nOmega : \cGn_n \times \cGn_n \to \ZZ$ be the  maps with 
\be\label{omega-mn-eq}
\mOmega(y,z) := \mu^\m_{yz} + \mu^\m_{zy}
\quand 
\nOmega(y,z): =  \mu^\n_{yz} + \mu^\n_{zy}.
\ee
Unlike the Kazhdan--Lusztig case, these integer coefficients can be negative.

\begin{theorem}[\cite{MZ}] 
The triples $\mGamma := (\cGm_n,  \mOmega, \mAsc)$ and $\nGamma := (\cGm_n,  \nOmega, \nAsc)$ are $S_n$-graphs whose associated Iwahori-Hecke algebra modules are Gelfand models.
\end{theorem}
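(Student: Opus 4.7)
The strategy is to mimic the classical Kazhdan--Lusztig construction of Example~\ref{kl-ex}, with the canonical bases $\{\underline M_z\}$ and $\{\underline N_z\}$ from Theorems~\ref{cM-thm}(c) and \ref{cN-thm}(c) playing the role of the Kazhdan--Lusztig basis $\{\underline H_w\}$. Since these canonical bases are obtained from $\{M_z\}$ and $\{N_z\}$ by an upper-unitriangular change of basis over $\LL$, the underlying $\H$-modules are $\cM$ and $\cN$ themselves, and so the Gelfand model property is immediate from Theorems~\ref{cM-thm}(a) and \ref{cN-thm}(a). The entire task thus reduces to verifying the $W$-graph axiom \eqref{hy-eq}. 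I would treat $\mGamma$; the argument for $\nGamma$ follows the same pattern with the roles of the weak ascent and weak descent sets swapped throughout, in line with the sign differences between Theorems~\ref{cM-thm} and \ref{cN-thm} and the matching definition~\eqref{ascn-eq} of $\nAsc$.

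For fixed $z \in \cGm_n$ and $s = s_i$, the two statements to establish are $H_s\,\underline M_z = x\,\underline M_z$ when $s \notin \mAsc(z)$, and $H_s\,\underline M_z = -x^{-1}\,\underline M_z + \sum_{w} \mOmega(z,w)\,\underline M_w$ (sum over $w \in \cGm_n$ with $s \notin \mAsc(w)$) when $s \in \mAsc(z)$. The common tool in both cases is bar-invariance: since $\overline{H_s} = H_s - (x - x^{-1})$, the element $H_s + x^{-1}$ is bar-invariant, hence so is $(H_s + x^{-1})\underline M_z$ for every $z$. In the descent case I would show that $(H_s - x)\underline M_z = ((H_s + x^{-1}) - (x + x^{-1}))\underline M_z$ is bar-invariant and, after expanding in $\{M_y\}$ via the multiplication rules of Theorem~\ref{cM-thm} and the triangularity of $\underline M_z$, lies in the submodule $\sum_y x^{-1}\ZZ[x^{-1}]\,M_y$; the uniqueness argument already used to justify \eqref{kl-eq} then forces it to vanish. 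In the ascent case, the same principle applied to $H_s\,\underline M_z + x^{-1}\underline M_z$ expresses it uniquely as a bar-invariant combination of those $\underline M_w$ with $s \notin \mAsc(w)$, and comparing coefficients against the expansion in $\{M_y\}$ identifies the coefficient of $\underline M_w$ as $\mu^\m_{zw} + \mu^\m_{wz} = \mOmega(z,w)$, precisely paralleling the identity $\omega_{\mathsf{KL}}(y,w) = \mu_{yw} + \mu_{wy}$ of Example~\ref{kl-ex}.

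The main obstacle is the case analysis. Theorem~\ref{cM-thm} specifies $H_s\,M_y$ via four distinct branches indexed by whether $i$ lies in $\Des^<(y)$, $\Des^=(y)$, $\Asc^<(y)$, or $\Asc^=(y)$, and these branches must interact with the four corresponding possibilities for $z$ and the triangular support of $\underline M_z$ so that the leading terms telescope into the clean formulas above. This bookkeeping is close in spirit to that already required for Theorem~\ref{cM-thm}(c), so I expect the verification to proceed in close parallel with the standard Soergel--Kazhdan--Lusztig argument, with Proposition~\ref{embed-des-prop} available to transfer conclusions between $\mGamma$ and $\nGamma$ through the embeddings $\mmap$ and $\nmap$ where convenient.
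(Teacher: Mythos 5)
This theorem is quoted from \cite[Thm.~1.10]{MZ}; the present paper offers no self-contained proof, only cosmetic adaptations of conventions (the vertex set $\cGn_n$ for $\nGamma$, and the non-reduced edge weights of Remark~\ref{reduced-rmk}), so there is no in-paper argument to compare against. Your overall plan is the standard Soergel--Kazhdan--Lusztig template and is almost certainly what \cite{MZ} does; the Gelfand-model observation via the unitriangular change of basis is correct.

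The descent case, however, has a genuine gap. You assert that $(H_s - x)\underline M_z$ ``after expanding in $\{M_y\}$ via the multiplication rules of Theorem~\ref{cM-thm} and the triangularity of $\underline M_z$, lies in the submodule $\sum_y x^{-1}\ZZ[x^{-1}]\,M_y$.'' Triangularity and the multiplication rules do not give this. Writing $\underline M_z = M_z + \sum_{\ell(y)<\ell(z)} \m_{yz} M_y$ with $\m_{yz}\in x^{-1}\ZZ[x^{-1}]$ and applying $(H_s - x)$: when $i \in \Des^<(z)$ the leading term $(H_s-x)M_z = M_{szs} - x^{-1}M_z$ contributes $M_{szs}$ with coefficient $1 \notin x^{-1}\ZZ[x^{-1}]$, and for $y$ with $s\in\mAsc(y)$ the term $-x\,\m_{yz}\,M_y$ has constant coefficient $-\mu^\m_{yz}$, which is not a priori zero. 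Killing these constant terms requires precisely the recursive facts about the canonical basis: $\mu^\m_{szs,z} = 1$, and $\mu^\m_{yz}=0$ for every other $y$ with $s\in\mAsc(y)$ and $s\notin\mAsc(z)$. These are exactly what drop out of the inductive construction recorded in the proposition immediately after the theorem, namely $\underline M_{szs} = (H_s + x^{-1})\underline M_z - \sum_{\ell(y)<\ell(z),\, s\notin\mAsc(y)}\mu^\m_{yz}\underline M_y$. The cleaner route is therefore to work from that recursion directly: apply $(H_s - x)$, use the algebra identity $(H_s - x)(H_s + x^{-1}) = 0$, and induct on $\ell(z)$. You would also need a separate treatment of the weak descent subcase $i\in\Des^=(z)$, since there $szs = z$ and the $s$-reduction is unavailable---either reduce via a different element of $\Des^<(z)$, or observe that $\Des^<(z) = \varnothing$ forces $\underline M_z = M_z$, making the assertion immediate from Theorem~\ref{cM-thm}.
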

 
The definitions of $\mOmega$ and $\nOmega$ here are simpler than in \cite[Thm. 1.10]{MZ},
following the conventions in Remark~\ref{reduced-rmk}.
Also, the version of $\nGamma$ here differs from what is in \cite[Thm. 1.10]{MZ} in having $\cGn_n$
as its vertex set. The two formulations are equivalent via Remark~\ref{cN-rmk}.

It is not very clear from our discussion how to actually compute the integers in \eqref{omega-mn-eq}.
We mention some inductive formulas from \cite{MZ} that can be used for this purpose:

\begin{proposition}[{See \cite[Lems. 3.7, 3.15, and 3.27]{MZ}}]
Let $z \in \Ifpf_{2n}$, $i \in \Asc^<(z)$, and $s = s_i$. 
\ben
\item[(a)] If $z \in \cGm_n$ then $ \underline  M_{szs} 
=
(H_s + x^{-1}) \underline  M_z  
-
\sum_{\substack{\ell(y)<\ell(z), \hs s \notin \mAsc(y)}}
\mu^{\m}_{yz}   \underline  M_y.$

\item[(b)]  If $z \in \cGn_n$ then $  
 \underline  N_{szs} 
=
(H_s + x^{-1})\underline  N_z
-
\sum_{\substack{\ell(y)<\ell(z), \hs s \notin \nAsc(y)}}
\mu^{\n}_{yz}   \underline  N_y.$
\een
\end{proposition}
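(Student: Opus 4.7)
I will prove (a); part (b) follows by the parallel argument applied to $\cN$ using Theorem~\ref{cN-thm}. The approach is to verify the two conditions characterizing $\underline M_{szs}$ in Theorem~\ref{cM-thm}(c): the right-hand side of (a) must be bar-invariant, and must lie in $M_{szs} + \sum_{\ell(v) < \ell(szs)} x^{-1}\ZZ[x^{-1}] M_v$. Since $i \in \Asc^<(z)$ forces $\ell(z) < \ell(szs)$, uniqueness will then force the right-hand side to equal $\underline M_{szs}$.

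\textbf{Bar-invariance and expansion.} A direct calculation using $\overline{H_s} = H_s - (x - x^{-1})$ gives $\overline{H_s + x^{-1}} = H_s + x^{-1}$, so $(H_s+x^{-1})\underline M_z$ is bar-invariant, and each correction $\underline M_y$ is bar-invariant by definition. To analyze the form, I would expand $\underline M_z = M_z + \sum_{\ell(y)<\ell(z)} \m_{yz} M_y$ with $\m_{yz} \in x^{-1}\ZZ[x^{-1}]$, and use Theorem~\ref{cM-thm} to compute
\[
(H_s + x^{-1}) M_y = \begin{cases}
M_{sys} + x^{-1} M_y & \text{if } i \in \Asc^<(y), \\
M_{sys} + x M_y & \text{if } i \in \Des^<(y), \\
0 & \text{if } i \in \Asc^=(y), \\
(x + x^{-1}) M_y & \text{if } i \in \Des^=(y).
\end{cases}
\]
Since $i \in \Asc^<(z)$ and conjugation by $s$ is injective, only the $y = z$ summand contributes to $M_{szs}$, with coefficient $1$. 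For $y \ne z$ with $s \in \mAsc(y)$ (the $\Asc^<$ and $\Asc^=$ cases), every contribution to $(H_s+x^{-1})\underline M_z$ has coefficients in $x^{-1}\ZZ[x^{-1}]$ throughout. For $y$ with $s \notin \mAsc(y)$ (the $\Des^<$ and $\Des^=$ cases), the factor $x \cdot \m_{yz}$ promotes the leading $x^{-1}$-term of $\m_{yz}$ to an offending $x^0$ coefficient on $M_y$, equal precisely to $\mu^\m_{yz}$.

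\textbf{Correction step and main obstacle.} Subtracting $\mu^\m_{yz} \underline M_y$ for each such $y$ exactly cancels this $x^0$ coefficient, because $\underline M_y = M_y + \sum_{\ell(v)<\ell(y)} \m_{vy} M_v$ has coefficient $1$ on $M_y$ and coefficients in $x^{-1}\ZZ[x^{-1}]$ on all other $M_v$; the only side-effect on other coefficients is an addition of $-\mu^\m_{yz}\m_{vy} \in x^{-1}\ZZ[x^{-1}]$. So the corrected expression satisfies both canonical-form conditions, and by Theorem~\ref{cM-thm}(c) it equals $\underline M_{szs}$. The main obstacle is the coefficient bookkeeping in the middle step: when several $y$'s contribute to a single $M_v$ with $v \ne szs$ --- some directly (via the $xM_y$ or $(x+x^{-1})M_y$ terms in the Des cases when $y = v$) and some via transport (via the $M_{sys} = M_v$ terms in the $\Asc^<$ and $\Des^<$ cases, where only $\m_{yz}$ rather than $x\m_{yz}$ multiplies $M_{sys}$) --- one must confirm that only the direct Des-channel terms can produce nonzero $x^0$ coefficients. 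This reduces to the transported contribution $\m_{yz} M_{sys}$ having coefficient $\m_{yz} \in x^{-1}\ZZ[x^{-1}]$, which holds by hypothesis.
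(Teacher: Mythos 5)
The paper itself offers no proof of this proposition; it is stated with a citation to \cite{MZ} (Lemmas 3.7, 3.15, 3.27 there), so there is no in-paper argument to compare against. Your proposal is the standard Kazhdan--Lusztig-style uniqueness argument and, as far as I can tell, it is correct. You verify the two computations needed: $\overline{H_s + x^{-1}} = H_s + x^{-1}$ (so the product $(H_s + x^{-1})\underline M_z$ is bar-invariant by $\H$-compatibility), and the case-by-case expansion of $(H_s + x^{-1})M_y$ over the four descent/ascent types, which matches Theorem~\ref{cM-thm}. You correctly identify that only the $\Des^<$ and $\Des^=$ channels (i.e.\ precisely $s\notin\mAsc(y)$) produce an $x^0$ coefficient equal to $\mu^\m_{yz}$ on $M_y$, that the transported term $\m_{yz}M_{sys}$ stays in $x^{-1}\ZZ[x^{-1}]$, that only $y=z$ contributes to $M_{szs}$ with unit coefficient, and that subtracting $\mu^\m_{yz}\underline M_y$ removes the $x^0$ term while only adding further contributions in $x^{-1}\ZZ[x^{-1}]$. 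One point you could have made more explicit is why the length constraint is respected: for $y$ with $\ell(y)<\ell(z)$ and $i\in\Asc^<(y)$, one has $\ell(sys)=\ell(y)+2<\ell(z)+2=\ell(szs)$, and for $i\in\Des^<(y)$ the inequality is even stronger, so every vertex $M_v$ appearing outside $M_{szs}$ indeed satisfies $\ell(v)<\ell(szs)$. With that observed, Theorem~\ref{cM-thm}(c) (uniqueness of the canonical basis element) finishes the argument exactly as you indicate, and part (b) is the parallel calculation in $\cN$.
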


\subsection{Bidirected edges}

As explained in the introduction, it is a natural problem to classify the cells in a given $W$-graph,
where a \defn{cell} means a strongly connected component.
The cells in the left and right Kazhdan--Lusztig $W$-graphs are called the \defn{left and right cells} of $W$.

Two vertices in a $W$-graph $\Gamma=(V,\omega,\tau)$ form a 
\defn{bidirected edge} $v \leftrightarrow w$
if $\omega(v,w)\neq 0 \neq \omega(w,v)$.
The \defn{molecules} of $\Gamma$
are the connected components for the 
undirected graph on $V$ that retains only the bidirected edges.
These subsets do not inherit a $W$-graph structure but are easier to classify than  the cells.
As mentioned in the introduction, we expect that all cells $\mGamma$ and $\nGamma$ are actually molecules
 \cite[Conj. 1.16]{MZ}.
As partial progress towards this conjecture, we will classify the molecules in $\mGamma$ and $\nGamma$ in the next section.

Before this, we need a better understanding of the bidirected edges in $\mGamma$ and $\nGamma$.
 Fix an integer  $1<i<n$ and suppose $v,w \in S_n$ are distinct.
Let $<$ denote the \defn{Bruhat order} on any symmetric group. Below, we will 
often consider this partial order restricted to the set of fixed-point-free involutions $\Ifpf_{2n}$.
 Recall that
one has $w<ws_i$  if and only if $w(i)<w(i+1)$ and $v<w$ if and only if $v^{-1}<w^{-1}$ \cite[Chapter 2]{CCG}.
It follows for $z \in \Ifpf_{2n}$  that $z < s_i zs_i$ if and only if $z(i) < z(i+1)$,
which occurs if and only if $ \ell(s_izs_i)=\ell(z)+2$.

Using just elementary algebra one can show that
$v \leftrightarrow w$ is a bidirected edge in the left (respectively, right)
Kazhdan--Lusztig $S_n$-graph if and only if $v\dK{i}w$ (respectively, $v\K{i}w$) for some $1<i<n$ \cite[Lems. 6.4.1 and 6.4.2]{CCG}.
It is known that the left and right cells in $S_n$
are all molecules \cite[\S6.5]{CCG},
so
 Theorem~\ref{A-thm} implies that
the left (respectively, right) cells in $S_n$ are the subsets on which $\QRSK$ (respectively, $\PRSK$)
is constant \cite[Thm. 1.4]{KL}.

Observe that if $\ell(v) \leq \ell(w)$,
then $v \K{i}w$ (respectively, $v \dK{i}w$) if and only if 
$vs < v < vt = w < ws$ (respectively, $sv < v < tv = w < sw$) for 
 $s=s_{i-1}$ and $t=s_i$ or for $s=s_i$ and $t=s_{i-1}$, that is, for
some choice of $\{s,t\} = \{s_{i-1},s_i\}$.
There is a similar description of the bidirected edges in
 $\mGamma$ and $\nGamma$.
First let $\mleftrightarrow{i}$ be the relation on $\cGm_n $ that has $y \mleftrightarrow{i} z$
 if and only if 
\[ s ys \leq y < tyt = z <szs\quord
s zs \leq z < tzt = y <sys
\quad\text{for some $\{s,t\} =\{s_{i-1},s_i\}$.}\]
Next define $\nleftrightarrow{i}$ to be the relation on $\cGn_n$ that has $y \nleftrightarrow{i} z$
 if and only if 
\[ s ys < y < tyt = z \leq szs \quord s zs < z < tzt = y \leq sys
\quad\text{for some $\{s,t\} =\{s_{i-1},s_i\}$.}\]
We can only have $y \mleftrightarrow{i} z$ or $y \nleftrightarrow{i} z$ if $|\ell(y) - \ell(z)| = 2$.

 \begin{lemma}
 Let $y,z\in\cGm_n$ (respectively, $y,z\in\cGn_n$).
 Then 
$y\leftrightarrow z$ is a bidirected edge in $\mGamma$ (respectively $\nGamma$)
 if and only if $y \mleftrightarrow{i} z$ (respectively  $y \nleftrightarrow{i} z$) for some $1<i<n$.
 \end{lemma}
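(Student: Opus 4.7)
The plan is to reduce the bidirected-edge condition to a concrete statement about $\mu^\m$-coefficients. Since $\mOmega(y,z) = \mu^\m_{yz} + \mu^\m_{zy}$ is symmetric with support graded by length, the weight $\omega(y,z)$ is nonzero precisely when $\mu^\m_{yz} \ne 0$ for the ordering with $\ell(y) < \ell(z)$. The bidirected nature additionally requires that both vertices contribute to the module action \eqref{hy-eq} across the edge, which amounts to $\mAsc(y)$ and $\mAsc(z)$ being incomparable as subsets of $\{s_1, \ldots, s_{n-1}\}$, since a directed edge $v \to w$ carries structural weight in the action only when some $s \in \tau(v) \setminus \tau(w)$ is available. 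The goal is then to show that these two conditions together are equivalent to $y \mleftrightarrow{i} z$ for some $i$.

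For the implication $(\Leftarrow)$, assume $y \mleftrightarrow{i} z$ with WLOG $\ell(y) + 2 = \ell(z)$, and write $z = tyt$ with $\{s,t\} = \{s_{i-1}, s_i\}$, $t \in \mAsc(y) \setminus \mAsc(z)$, and $s \in \mAsc(z) \setminus \mAsc(y)$. Since $z \in \cGm_n$ rules out $t \in \Asc^=(y)$ (which would send $tyt$ outside $\cGm_n$), one has $t \in \Asc^<(y)$, and the inductive formula applies:
\[
\underline M_z = (H_t + x^{-1}) \underline M_y - \sum_{\substack{\ell(w) < \ell(y)\\ t \notin \mAsc(w)}} \mu^\m_{wy} \underline M_w.
\]
Extracting the $M_y$-coefficient via Theorem~\ref{cM-thm} yields $\m_{yz} = x^{-1}$: no $v$ with $\ell(v) < \ell(y)$ can satisfy $tvt = y$ (since that would force $\ell(v) = \ell(z) > \ell(y)$), so the $H_t$-action on $\underline M_y$ contributes nothing to the $M_y$-component, while the correction sum vanishes at $M_y$ by length comparison. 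Hence $\mu^\m_{yz} = 1$, and the existence of $s \in \mAsc(z) \setminus \mAsc(y)$ and $t \in \mAsc(y) \setminus \mAsc(z)$ renders the ascent sets incomparable.

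For the converse $(\Rightarrow)$, assume $\mu^\m_{yz} \ne 0$ with $\ell(y) < \ell(z)$ and $\mAsc(y), \mAsc(z)$ incomparable. I would proceed by induction on $\ell(z)$: pick $s \in \mAsc(y) \setminus \mAsc(z)$, so $s \in \Des^<(z) \cup \Des^=(z)$. In the generic case $s \in \Des^<(z)$, set $z' := szs$ and invoke the inductive formula to express $\underline M_z$ in terms of $\underline M_{z'}$ and lower-length corrections. Tracking the $M_y$-coefficient through Theorem~\ref{cM-thm}'s four $H_s$-action cases reduces the problem to either a smaller instance (handled by induction) or a direct length-2 configuration $z = tyt$ for a reflection $t$ adjacent to $s$ in the Coxeter graph. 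The incomparability hypothesis then identifies the pair $\{s, t\} = \{s_{i-1}, s_i\}$ realizing $y \mleftrightarrow{i} z$.

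The principal technical obstacle is the edge case $s \in \Des^=(z)$ in the converse direction, for which the inductive formula does not apply directly. One must either locate an alternative element of $\mAsc(y) \setminus \mAsc(z)$ lying in $\Des^<(z)$, or exploit bar-invariance of $\underline M_z$ together with the quadratic and braid relations among $H_{s_{i-1}}$ and $H_{s_i}$ to constrain $\mu^\m_{yz}$ into the required length-2 configuration. The argument for $\nGamma$ proceeds in strict parallel using Theorem~\ref{cN-thm} in place of Theorem~\ref{cM-thm}; the swap between $\leq$ and $<$ in the definition of $\nleftrightarrow{i}$ (as compared to $\mleftrightarrow{i}$) reflects precisely that $\nAsc$ promotes the weak descents $\Des^=$ to the ascent set, moving the commuting case $sys = y$ from the strict-descent side to the ascent side of the relevant Bruhat comparisons.
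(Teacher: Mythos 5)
Your approach is genuinely different from the paper's, and it has a real gap. The paper's proof of this lemma is essentially a citation: given the formula \eqref{ascm-eq}, it invokes \cite[Cor.\ 3.14 and Lem.\ 3.27]{MZ} for the characterization of bidirected edges in $\mGamma$ as pairs satisfying $s_iys_i \leq y < s_jys_j = z < s_izs_i$ (or symmetrically) for some $i,j \in [n-1]$, and then adds a short length argument showing that if $s_i s_j = s_j s_i$ this chain of inequalities is contradictory, which forces $|i-j| = 1$ and hence $\{s_i,s_j\} = \{s_{i-1},s_i\}$. The $\nGamma$ case is parallel. All of the nontrivial content lives in the cited results. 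You are instead attempting to re-derive the characterization from scratch via the canonical-basis recursion, which is essentially re-proving the substance of \cite{MZ}.

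The converse direction of your argument is incomplete, as you yourself flag: the case $s \in \Des^=(z)$ is not handled, and neither of your proposed remedies is carried out. Even in the generic case $s \in \Des^<(z)$ your sketch is underdeveloped. The key thing that needs proving is that $\mu^\m_{yz} \neq 0$ together with the incomparability of $\mAsc(y)$ and $\mAsc(z)$ forces $\ell(z) - \ell(y) = 2$ and $z = tyt$ for a simple reflection $t$ adjacent to $s$; your claim that ``tracking the $M_y$-coefficient through the recursion reduces the problem to either a smaller instance or a direct length-2 configuration'' is asserted, not established, and it is exactly this kind of bookkeeping that \cite{MZ} supplies. The forward direction is closer to correct --- computing $\m_{yz} = x^{-1}$ via the inductive formula is a reasonable plan --- but the parenthetical ``$z \in \cGm_n$ rules out $t \in \Asc^=(y)$'' is not self-evident and deserves an argument of its own: you would need a small lemma on how conjugation across weak ascents interacts with membership in $\cGm_n$.

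A minor additional point: your opening reduction of ``bidirected edge'' to ``$\mu^\m_{yz} \neq 0$ and $\mAsc(y), \mAsc(z)$ incomparable'' is implicitly using the reduced convention of Remark~\ref{reduced-rmk}. Under the unreduced convention adopted in this paper $\omega$ is symmetric, so $\omega(y,z) \neq 0 \neq \omega(z,y)$ is literally just $\omega(y,z) \neq 0$; the incomparability condition enters because the cell/molecule structure of a $W$-graph is insensitive to the unreduced weights and is equivalently computed after reduction. The gloss is correct in spirit, but should be stated, not slipped in.
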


 \begin{proof}
 We first characterize the bidirected edges in $\mGamma$. Fix $y,z\in\cGm_n$.
Given the formula \eqref{ascm-eq}, the results in our previous paper \cite[Cor. 3.14 and Lem. 3.27]{MZ} assert that $y \leftrightarrow z$ is a bidirected edge in $\mGamma$ if and only if for some $i,j \in [n-1]$
either $s_iys_i \leq y<s_jys_j = z < s_izs_i$ or
$s_izs_i \leq z<s_jzs_j = y < s_iys_i$.
The last two properties can only hold if $|i-j|=1$ so that $s_i$ and $s_j$ do not commute: 
for example, if $s_is_j=s_js_i$ and $s_iys_i \leq y<s_jys_j = z < s_izs_i$
then 
 \[\ell(z) < \ell(s_i zs_i) = \ell(s_i s_j y s_j s_i) = \ell(s_j s_i y s_i s_j)\leq \ell(s_iy s_i) + 2 \leq  \ell(y) + 2 = \ell(z)\]
which is impossible, and similarly for the other case.
Thus $y \leftrightarrow z$ is a bidirected edge in $\mGamma$ precisely when $y \mleftrightarrow{i} z$  for some $1<i<n$.

The argument to handle the bidirected edges in $\nGamma$ is similar. Fix $y,z\in\cGn_n$.
Then it follows from \eqref{ascn-eq} and \cite[Cor. 3.17 and Lem. 3.27]{MZ}   that $y \leftrightarrow z$ is a bidirected edge in $\nGamma$ if and only if for some $i,j \in [n-1]$
either $s_iys_i < y<s_jys_j = z \leq s_izs_i$ or
$s_izs_i < z<s_jzs_j = y \leq s_iys_i$.
The last two properties can again only hold if $|i-j|=1$ so that $s_i$ and $s_j$ do not commute: 
for example, if $s_is_j=s_js_i$ and $s_iys_i < y<s_jys_j = z \leq s_izs_i$
then 
 \[\ell(z) \leq \ell(s_i zs_i) = \ell(s_i s_j y s_j s_i) = \ell(s_j s_i y s_i s_j)\leq \ell(s_iy s_i) + 2 =  \ell(y) < \ell(z)\]
which is impossible, and similarly for the other case.
Thus $y \leftrightarrow z$ is a bidirected edge in $\nGamma$ precisely when $y \nleftrightarrow{i} z$  for some $1<i<n$.
 \end{proof}
 
 \subsection{Gelfand molecules}\label{mleft-sect}
 
As noted above, the molecules of the left and right Kazhdan--Lusztig graphs for $S_n$
(which are the same as the left and right cells)
 are the subsets on which $\QRSK$ and $\PRSK$ are respectively constant. 
The molecules in $\mGamma$ and $\nGamma$
 have a similar description as the fibers of slightly modified versions
 of the maps $\PRBS=\PRSK$ and $\PCBS$ from Sections~\ref{rbs-sect} and \ref{cbs-sect}.
 
If $T$ is a tableau and $\cX$ is a set, then let $T|_\cX$ be 
the tableau formed by omitting all entries of $T$ not in $\cX$.
Recall the definitions of $\cGm_n\subset \Ifpf_{2n}$ and $\cGn_n\subset \Ifpf_{2n}$
from Section~\ref{gel-sect}.

\begin{definition} 
For $y \in \cGm_n$ and $z \in \cGn_n$ define
$
\PMRSK(y) := \PRBS(y)\big|_{[n]} 
$
and
$
\PNRSK(z) := \PCBS(z)\big|_{[n]}
$.
\end{definition}

\begin{example}
We have  
\[
\ba
 \PMRSK(\mmap(2134))= \PMRSK(21563487)&=\ytab{1 &  3 & 4 \\ 2},
\\
 \PMRSK(\mmap(3214))=\PMRSK(35162487)&=\ytab{1 &  2 & 4 \\ 3},
\\
 \PMRSK(\mmap(4231))=\PMRSK(45612387)&=\ytab{1 &  2 & 3 \\ 4},
\ea
\qquad
\ba
 \PNRSK(\nmap(2134))= \PNRSK(21654387)&=\ytab{1 &  2 & 3 \\ 4},
\\
 \PNRSK(\nmap(3214))=\PNRSK(36154287)&=\ytab{1 &  2 & 4 \\ 3},
\\
 \PNRSK(\nmap(4231))=\PNRSK(46513287)&=\ytab{1 &  2 \\ 3 \\ 4}.
\ea\]
\end{example}

Let $T$ be a standard tableau with $n$ boxes and $k$ odd columns. We form a standard tableau $\mIota(T)$ with $2n$ boxes and no odd columns 
from $T$ by the following procedure.
First place the numbers $n+1$, $n+2$, \dots, $n+k$ at the end of the odd columns of $T$ going left to right;
then add the numbers $n+k+1$, $n+k+3$, \dots, $2n-1$ to the first row; and finally add $n+k+2$, $n+k+4$, \dots, $2n$ to the second row to form $\mIota(T)$.
For example, if 
\[ T= \ytab{ 1 & 2 & 3 & 4 \\ 5 & 7  \\ 6 }
\quad\text{then}
\quad
\mIota(T) =  \ytab{ 1 & 2 & 3 & 4 & 11 & 13 \\ 5 & 7 & 9 & 10 & 12 & 14  \\ 6  \\ 8}.
\]
Call an integer $i$ a \defn{transfer point} of an element $z \in \Ifpf_{2n}$
if $i \in [n]$ and $z(i) \in [2n]\setminus[n]$.

\begin{lemma}\label{pm-lem}
If $z \in \cGm_n$ then $\mIota(\PMRSK(z)) = \PRBS(z)$.
\end{lemma}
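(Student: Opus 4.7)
My plan is to decompose the computation of $\PRBS(z) = \PRBS(\mmap(w))$ into three stages, based on the pairs $(a,b)$ of $z$ ordered by second coordinate: (a) the pairs $(a,b)$ with $a < b \le n$ coming from the $2$-cycles of $w$; (b) the pairs $(c_i, n+i)$ for $i = 1, \ldots, q$, where $c_1 < \cdots < c_q$ enumerate the fixed points of $w$; and (c) the pairs $(n+q+2j-1, n+q+2j)$ for $j \ge 1$. Writing $T_0 := T_w$ for the result of stage (a), $T_i$ for the tableau after the first $i$ insertions of stage (b), and $U_i := T_i|_{[n]}$, one has $U_q = \PMRSK(z)$ by definition. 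Stage (c) is immediate: each $n+q+2j-1$ exceeds every entry currently present, so Schensted insertion appends it to the end of row $1$ and then $n+q+2j$ is appended to the end of row $2$, matching the interleaving part of $\mIota$.

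Every tableau arising during stages (a) and (b) has all even column lengths, since each $\fromRBS(a,b)$ with $a<b$ preserves the number of odd columns by Theorem~\ref{prbs-thm} and we start from $\varnothing$. Such shapes satisfy $\lambda_{2k-1} = \lambda_{2k}$ for every $k$, so Schensted insertion necessarily terminates in an odd row $r_i$ (the column receiving the new box had even length $r_i - 1$), and the Beissinger appending then places $n+i$ in row $r_i + 1$ at column $L_{r_i+1} + 1 = L_{r_i} + 1 = s_i$, directly below the new Schensted box.

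I argue by induction on $i$ that $U_i = U_{i-1} \fromRSK c_i$; that the new-box columns satisfy $s_1 < s_2 < \cdots < s_i$, using the classical row bumping lemma applied to the successive Schensted insertions $c_{i-1} < c_i$ in $U_{i-2}$ and $U_{i-1}$; and that $s_1, \ldots, s_i$ are precisely the odd columns of $U_i$, since $U_i$ arises from the all-even-column tableau $T_i$ by deleting the bottom box of each column $s_j$ with $j \le i$. Combined with the analysis of stage (c), this gives $\mIota(\PMRSK(z)) = \PRBS(z)$.

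The hard part is showing that Schensted insertion of $c_i$ in $T_{i-1}$ bumps no high entry, so that its effect on low entries matches Schensted insertion in $U_{i-1}$. Suppose for contradiction that the bumping path first bumps a high entry at row $r$. Since the low entries of each row precede its high entries, this forces the current bumped value $v$ to exceed every low entry of row $r$, and the bumped high entry sits at column $L_r^{\text{low}} + 1$, where $L_r^{\text{low}}$ counts the low entries of row $r$. Running Schensted insertion of $c_i$ in $U_{i-1}$ instead, the paths agree through rows $1, \ldots, r-1$, and in row $r$ of $U_{i-1}$—which ends at column $L_r^{\text{low}}$—the value $v$ exceeds everything, so the hypothetical path terminates at $(r, L_r^{\text{low}} + 1)$. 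The row bumping lemma then forces $L_r^{\text{low}} + 1 > s_{i-1}$. But the bumped high entry is some $n+j$ with $j < i$, which by the inductive hypothesis sits at column $s_j$, giving $s_j = L_r^{\text{low}} + 1 > s_{i-1}$, contradicting $s_j \le s_{i-1}$.
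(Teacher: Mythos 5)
Your proof is correct and takes essentially the same approach as the paper: the same three-stage decomposition and the same reliance on the classical row bumping lemma and the all-even-columns invariant. The only variation is in the ``no-interaction'' step, where the paper argues directly that each earlier entry $n+j$ sits in a row strictly below the bumping path of $c_i$ (using the weakly decreasing rows $r_1 \geq r_2 \geq \cdots$), whereas you reach the same conclusion by contradiction through the strictly increasing columns $s_1 < s_2 < \cdots$; both are facets of the same bumping-path monotonicity.
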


\begin{proof}
Fix $z \in \cGm_n$ and suppose
$(a_i,b_i)\in[n]\times [n]$ for $i \in [p]$ are the pairs with $a_i < b_i = z(a_i)$, ordered such that
$b_1<b_2<\dots<b_p$. Define $U := \emptyset \fromMRSK (a_1,b_1)\fromMRSK (a_2,b_2) \fromMRSK\cdots \fromMRSK (a_p,b_p)$.
This tableau is partially standard with all even columns, since every insertion $\fromMRSK (a,b)$ with $a<b$ preserves the number of odd columns,
which begins as zero.

Next let $c_1<c_2<\dots<c_k$ denote the transfer points of $z$, so that $z(c_i) = n + i$ for $i \in [k]$.
The \defn{bumping path} of $x$ inserted into $U$ 
is the sequence of positions in $U$ whose entries are changed to form $U \fromRSK x$, together with the new box that is added 
to the tableau. If $x<y$ then the bumping path of
$y$ inserted into $U\fromRSK x$ is strictly to the right of the bumping path of $x$ inserted into $U$.

It follows that the boxes added by successively Schensted inserting $c_1$, $c_2$, \dots, $c_k$ into $U$ occur in a strictly increasing sequence of columns and a weakly decreasing sequence of rows.
Since $U$ starts out with all even columns, each of these $k$ boxes 
creates a new odd column.
Moreover, the result of Schensted inserting $c_i$ 
has no dependence on any of the rows after the box added by Schensted inserting $c_{i-1}$.
The tableau
$T := U \fromRSK c_1 \fromRSK c_2 \fromRSK \dots \fromRSK c_k$
is therefore standard with $k$ odd columns,
and 
placing the numbers $n+1$, $n+2$, \dots, $n+k$ at the end of  these columns going left to right
must give the same result as
\[
U \fromMRSK (c_1,n+1)\fromMRSK (c_2,n+2) \fromMRSK \dots \fromMRSK (c_k,n+k).
\]
To turn this tableau into $\PRBS(z)$ we insert $\fromMRSK(a,a+1)$ for $a=n+k+1,n+k+3,\dots,2n-1$,
but this just adds the numbers $n+k+1,n+k+3,\dots,2n-1$ to the first row and $n+k+2,n+k+4,\dots,2n$ to the second, as each value of $a$ is larger than all other entries in the tableau. From this description, we see that $\PMRSK(z) = \PRBS(z)|_{[n]}= T$ and $\mIota(T) = \PRBS(z)$ as needed.
\end{proof}

\begin{theorem}\label{pmrsk-thm}
The operation $\PMRSK$ defines a bijection from the set of elements of $\cGm_n$ 
with $k$ transfer points
to the set of standard tableaux with $n$ boxes and $k$ odd \textbf{columns}.
\end{theorem}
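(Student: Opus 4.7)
The plan is to deduce this bijection directly from Lemma~\ref{pm-lem} combined with Beissinger's classical theorem (Theorem~\ref{prbs-thm}), using a cardinality argument in place of constructing an explicit inverse.

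First I would check that $\mIota$ itself is injective on its natural domain: given any standard tableau $U$ with $2n$ boxes in the image of $\mIota$, the preimage $T$ is recovered by deleting every entry greater than $n$. This observation immediately yields injectivity of $\PMRSK$ on $\cGm_n$, for if $y,z\in\cGm_n$ satisfy $\PMRSK(y)=\PMRSK(z)$, then by Lemma~\ref{pm-lem}
\[
\PRBS(y)=\mIota(\PMRSK(y))=\mIota(\PMRSK(z))=\PRBS(z),
\]
and Theorem~\ref{prbs-thm} forces $y=z$.

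Next I would verify that $\PMRSK$ lands in the correct target. Suppose $z\in\cGm_n$ has $k$ transfer points. The description of the insertion order given in the proof of Lemma~\ref{pm-lem} shows that the entries $n+1,\ldots,n+k$ in $\PRBS(z)$ occupy exactly the cells appended at the bottom of the odd columns of $\PMRSK(z)=\PRBS(z)\big|_{[n]}$, and that the remaining entries $n+k+1,\ldots,2n$ occupy only the first two rows (completing all columns to even length). In particular, $\PMRSK(z)$ is a standard tableau with $n$ boxes and precisely $k$ odd columns.

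Finally, I would close the argument by matching cardinalities. The embedding $\mmap:\cI_n\hookrightarrow\cGm_n$ is a bijection onto its image (by definition of $\cGm_n$), and sends an involution with fixed points $c_1<\cdots<c_k$ to an element whose transfer points are exactly $c_1,\ldots,c_k$ (since $\mmap(w)(c_i)=n+i>n$, while $\mmap(w)(i)=w(i)\in[n]$ for non-fixed-point $i$). Hence the number of $z\in\cGm_n$ with $k$ transfer points equals the number of involutions in $\cI_n$ with $k$ fixed points, which by Theorem~\ref{prbs-thm} equals the number of standard tableaux with $n$ boxes and $k$ odd columns. An injection between finite sets of equal cardinality is a bijection, completing the proof.

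The only mild obstacle is verifying that $\PMRSK$ actually respects the transfer-point/odd-column counting (step two above); every other step is either formal or cited. That verification is already implicit in the proof of Lemma~\ref{pm-lem}, so I would simply invoke that analysis rather than redoing it.
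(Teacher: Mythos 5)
Your proposal is correct and follows essentially the same route as the paper: both deduce injectivity of $\PMRSK$ from Lemma~\ref{pm-lem} (the relation $\PRBS = \mIota\circ\PMRSK$ together with Beissinger's bijection), both observe that the transfer-point count becomes the odd-column count by tracking the three-stage insertion order, and both finish by a finite-set cardinality argument. The only cosmetic difference is that you run the counting at the $k$-refined level while the paper uses the global size $|\cI_n|$ and then appeals to the $k$-compatibility; either form is fine.
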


\begin{proof}
The number of odd columns in $\PMRSK(z)$ for $z \in \cGm_n$ is the number of 
columns in $\PRBS(z)$ with an odd number of entries in $[n]$.
The  operation $\fromRBS (a,b)$ preserves this number
when $n < a \leq b$ and increases it by one when $a \leq n < b$. 
Since we form $\PRBS(z)$ for $z \in \cGm_n$ by first inserting a sequence of cycles $(a,b)$ with $a < b \leq n$
(resulting in a tableau with all even columns and all entries $\leq n$), 
then inserting the cycles $(c_i,n+i)$ where $c_1<\dots<c_k$ are the transfer points $z$,
and finally by inserting a sequence of cycles $(a,b)$ with $n+k < a<b$,
we see that the number of odd columns in $\PMRSK(z)$ is the number of transfer points in $z$.
 Lemma~\ref{pm-lem} shows that $\PMRSK$ is an injective map from $\cGm_n$ to the set of standard tableaux with $n$ boxes (with left inverse $\PRBS^{-1} \circ \mIota$) and therefore a bijection as the domain and codomain both have size $|I_n|$.
\end{proof}

Let $\mLambda(z)$
  be the partition shape  of $\PMRSK(z)$  for $z \in \cGm_n$.
  
 \begin{theorem}\label{mleft-thm}
 Suppose $y,z\in \cGm_n$ are distinct and $1<i<n$.
 Then 
$y \mleftrightarrow{i} z$ if and only if $\PMRSK(y) = D_i(\PMRSK(z))$,
so the molecules in $\mGamma$ are the subsets of $\cGm_n$ on which $\mLambda$ 
is constant.
 \end{theorem}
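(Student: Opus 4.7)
The plan is to bridge $y\mleftrightarrow{i}z$ and $\PMRSK(y)=D_i(\PMRSK(z))$ through the intermediate relation $y\simRBS{i}z$ defined in Section~\ref{rbs-sect}, which I will apply in the ambient group $\Ifpf_{2n}$. First I would observe that for $1<i<n$, the dual equivalence operator $D_i$ commutes with $\mIota$ (and hence with the restriction $T\mapsto T|_{[n]}$). Indeed, $D_i$ swaps two entries from $\{i-1,i,i+1\}\subset[n]$ according to the relative order of $i-1,i,i+1$ in $\row(T)$; the labels $n+1,n+2,\dots,2n$ inserted by $\mIota$ are all larger than each of these, so the relative order of $i-1,i,i+1$ in the reading word is unchanged, while $D_i$ preserves the shape and hence the positions added by $\mIota$. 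Combined with Lemma~\ref{pm-lem}, this yields, for $y,z\in\cGm_n$,
\[
\PMRSK(y)=D_i(\PMRSK(z)) \iff \PRBS(y)=D_i(\PRBS(z)) \iff y\simRBS{i}z.
\]
As a free byproduct, if $y\in\cGm_n$ and $y\simRBS{i}z$, then $\PRBS(z)=\mIota(D_i(\PMRSK(y)))$ lies in the image of $\mIota$, so the bijectivity of $\PRBS$ and the injectivity of $\mIota$ force $z\in\cGm_n$ automatically.

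The heart of the argument is to show that for distinct $y,z\in\cGm_n$ and $1<i<n$, the relations $y\mleftrightarrow{i}z$ and $y\simRBS{i}z$ coincide. Because $y$ is fixed-point-free and $A=\{i-1,i,i+1\}$ has odd cardinality, an involution cannot permute $A$ without a fixed point inside $A$; thus $y(A)\neq A$. Consequently Theorem~\ref{direct-prop} forces $z\in\{y,\,s_{i-1}ys_{i-1},\,s_iys_i\}$, and since $z\neq y$ we have $z=tyt$ for some $t\in\{s_{i-1},s_i\}$. Both directions of the desired equivalence then reduce to a case analysis on the triple $(y(i-1),y(i),y(i+1))$: one translates the combinatorial condition that $y(i{+}1)$ (respectively, $y(i{-}1)$) lies between the other two values into the three Bruhat inequalities $sys\leq y<tyt=z<szs$ for the appropriate choice of $\{s,t\}=\{s_{i-1},s_i\}$, with the two disjuncts in the definition of $\mleftrightarrow{i}$ accounted for by swapping the roles of $y$ and $z$. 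I expect the main obstacle to be the bookkeeping in this case analysis, especially the delicate sub-cases where one of $y(i-1),y(i),y(i+1)$ belongs to $\{i-1,i,i+1\}$, so that the formulas for $z(i-1),z(i),z(i+1)$ pick up small corrections from $s_i$ or $s_{i-1}$ and must be computed carefully.

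Once the equivalence $y\mleftrightarrow{i}z\iff\PMRSK(y)=D_i(\PMRSK(z))$ is in hand, the statement about molecules is immediate. The molecules in $\mGamma$ are the connected components of the graph on $\cGm_n$ with edges $\mleftrightarrow{i}$ for $1<i<n$, which via the bijection $\PMRSK$ (Theorem~\ref{pmrsk-thm}) correspond to the $D_i$-connected components of standard Young tableaux with $n$ boxes. By the remark after Theorem~\ref{A-thm}, two such tableaux are connected by a sequence of $D_i$'s if and only if they have the same shape, so the molecules are precisely the fibers of $\mLambda$.
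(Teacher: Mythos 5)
Your proposal follows essentially the same route as the paper's proof: reduce $\PMRSK(y)=D_i(\PMRSK(z))$ to $y\simRBS{i}z$ via Lemma~\ref{pm-lem} and the compatibility of $D_i$ with $\mIota$, use the observation that a fixed-point-free involution cannot preserve the odd-cardinality set $\{i-1,i,i+1\}$ to rule out the degenerate case of Theorem~\ref{direct-prop}, match the resulting ``betweenness'' conditions to the Bruhat inequalities defining $\mleftrightarrow{i}$, and then deduce the molecule classification from Theorem~\ref{pmrsk-thm} together with dual-equivalence connectivity of same-shape standard tableaux. The only difference is that the paper actually carries out the order-vs.-Bruhat case analysis (including the delicate subcases such as $y(i-1)=i+1$, $y(i+1)=i$), which you correctly identify as the remaining bookkeeping but do not execute.
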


\begin{proof}
 First suppose $y \mleftrightarrow{i} z$. 
 Without loss of generality we may assume 
that $s ys \leq y < tyt = z <szs$ for some choice of $\{s,t\} =\{s_{i-1},s_i\}$. 

If $s=s_{i-1}$ and $t=s_i$, then it follows that $y(i-1)>y(i)<y(i+1)$ and $s_i(y(i-1))<s_i(y(i+1))$. 
The second inequality implies that $y(i-1) < y(i+1)$, since 
the fact that $y$ is an involution means we cannot have $y(i-1)=i+1$ and $y(i+1)=i$. 
Thus $y(i-1)$ is between $y(i)$ and $y(i+1)$, so
 by Theorem~\ref{direct-prop} we have $y\simRBS{i}z$ and $\PRBS(y) = D_i(\PRBS(z))$.
 Therefore 
\[\PMRSK(y)=\PRBS(y)|_{[n]} = D_i(\PRBS(z))|_{[n]} = D_i(\PRBS(z)|_{[n]})= D_i(\PMRSK(z))\]
by the definition of $D_i$ and the fact that $1<i<n$.

Alternatively, if $s=s_i$ and $t=s_{i-1}$, then  $y(i+1)<y(i)>y(i-1)$ and $s_{i-1}(y(i-1))<s_{i-1}(y(i+1))$. 
The second inequality implies that $y(i-1) < y(i+1)$ since we cannot have $y(i-1)=i$ and $y(i+1)=i-1$,
so $y(i+1)$ is between $y(i)$ and $y(i+1)$.
Then again by Theorem~\ref{direct-prop} we have $y\simRBS{i}z$ and $\PRBS(y) = D_i(\PRBS(z))$,
and it follows   as above that
 $\PMRSK(y)=D_i(\PMRSK(z))$.
We conclude that if  $y \mleftrightarrow{i} z$ then  $\PMRSK(y)=D_i(\PMRSK(z))$.

For the converse statement, suppose $\PMRSK(y)=D_i(\PMRSK(z))$. 
 Then we have
\[\PRBS(y)=\mIota(\PMRSK(y)) = \mIota(D_i(\PMRSK(z))) = D_i(\mIota(\PMRSK(z))) = D_i(\PRBS(z)),\] 
using Lemma~\ref{pm-lem} for the first and last equalities, and the definitions of $D_i$ and $\mIota$ for
the third equality.
The fixed-point-free involution $y\in\cGm_n\subset \Ifpf_{2n}$ cannot preserve the set $ \{i-1,i,i+1\}$.
 Since we also assume  $y\neq z$,    Theorem~\ref{direct-prop}   implies that either
\begin{itemize}
\item  $z=s_{i-1}ys_{i-1}$ and $y(i+1)$ is between $y(i-1)$ and $y(i)$, or
 \item $z=s_{i}ys_{i}$ and  $y(i-1)$ is between $y(i)$ and $y(i+1)$.
 \end{itemize} 
In the first case one has $s_iys_i \leq y < s_{i-1}ys_{i-1} = z <s_izs_i$ if $y(i-1) < y(i)$ or
$s_izs_i \leq z < s_{i-1}zs_{i-1} = y <s_iys_i$ if $y(i) < y(i-1)$.
Likewise,
in the second case one has $s_{i-1}ys_{i-1}\leq y < s_iys_i = z<s_{i-1}zs_{i-1}$ if $y(i) < y(i+1)$
or $s_{i-1}zs_{i-1}\leq z < s_izs_i = y<s_{i-1}ys_{i-1}$ if  $y(i+1) < y(i)$.
Either way  we have   $y \mleftrightarrow{i} z$ as desired.
\end{proof}

Now suppose $T$ is a standard tableau with $n$ boxes and $k$ odd rows.
By a slightly different procedure, we can form a standard tableau $\nIota(T)$ with $2n$ boxes and no odd rows from $T$.
First place the numbers $n+1$, $n+2$, \dots, $n+k$ at the end of the odd rows of $T$ going top to bottom;
then add   $n+k+1$, $n+k+2$, \dots, $2n$ to the first row and define $\nIota(T)$ to be the result.
If
\[ T= \ytab{ 1 & 2 &3 \\ 4 & 5 \\ 6 \\ 7}
\quad\text{then}
\quad
\nIota(T) =  \ytab{ 1 & 2 &3 & 8 & 11 & 12 & 13 & 14 \\ 4 & 5 \\ 6  & 9 \\ 7 & 10},
\]
for example.
We have analogues of Lemma~\ref{pm-lem} and Theorems~\ref{pmrsk-thm} and \ref{mleft-thm}:
\begin{lemma}\label{pn-lem}
If $z \in \cGn_n$ then $\nIota(\PNRSK(z)) = \PCBS(z)$.
\end{lemma}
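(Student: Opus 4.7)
The plan is to mirror the proof of Lemma~\ref{pm-lem} with appropriate adjustments for the column setting, in particular reversing the order in which the transfer pairs are processed.

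Fix $z \in \cGn_n$. Let $(a_i,b_i) \in [n]\times[n]$ for $i \in [p]$ be the pairs with $a_i < b_i = z(a_i)$, ordered so that $b_1 < b_2 < \cdots < b_p$, and set
\[U := \emptyset \fromNRSK (a_1,b_1) \fromNRSK (a_2,b_2) \fromNRSK \cdots \fromNRSK (a_p,b_p).\]
Since each step $\fromCBS(a,b)$ with $a<b$ preserves the number of odd rows, $U$ is partially standard with entries in $[n] \setminus \{c_1,\dots,c_k\}$ and all even-length rows.

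Next let $c_1 < c_2 < \dots < c_k$ be the transfer points of $z$. Because $z=\nmap(w)$, we have $z(c_i) = n+k+1-i$, so the transfer pairs ordered by increasing $b$-value become $(c_k,n+1), (c_{k-1},n+2),\dots,(c_1,n+k)$. The crucial observation, dual to the row case, is that consecutive Schensted insertions of values in \emph{decreasing} order produce new boxes in strictly \emph{increasing} rows and weakly decreasing columns (the symmetric counterpart to the bumping-path lemma used in the proof of Lemma~\ref{pm-lem}). Thus if $(r_j,j_j)$ denotes the new box produced by Schensted-inserting $c_j$, successive insertions of $c_k,c_{k-1},\dots,c_1$ give $r_k<r_{k-1}<\cdots<r_1$ with $j_k\ge j_{k-1}\ge \cdots\ge j_1$, and since $U$ has all even rows, each such insertion creates a new odd row.

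The main technical point — the analogue of the independence claim in the proof of Lemma~\ref{pm-lem} — is to verify that the auxiliary addition of $n+i$ at the end of column $j_{k+1-i}+1$, carried out immediately after the $i$-th Schensted insertion, (a) places $n+i$ precisely at position $(r_{k+1-i},\,j_{k+1-i}+1)$, i.e., at the end of the newly created odd row, and (b) does not interfere with any subsequent Schensted insertion of $c_{k-i},\dots,c_1$. For (a), one checks using the even-row property of $U$ together with induction on $i$ that column $j_{k+1-i}+1$ has length exactly $r_{k+1-i}-1$ at the moment $n+i$ is appended. For (b), one observes that the added value $n+i$ exceeds every transfer point and that subsequent bumping paths lie at rows strictly below $r_{k+1-i}$, so they cannot reach the position where $n+i$ was placed. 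This step is essentially the main obstacle and will require a careful induction over $i$, tracking the shape evolution just as in the row case.

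Granting this, write $T := \PNRSK(z) = \PCBS(z)|_{[n]}$. The argument above shows that $T = U \fromRSK c_k \fromRSK c_{k-1}\fromRSK \cdots \fromRSK c_1$ is standard with exactly $k$ odd rows at heights $r_k<r_{k-1}<\cdots<r_1$, and that inserting the transfer pairs in order of increasing $b$-value places $n+1,n+2,\dots,n+k$ at the ends of these odd rows from top to bottom — which is precisely the first phase of $\nIota$. The final insertions $(n+k+1,n+k+2),\,(n+k+3,n+k+4),\dots,(2n-1,2n)$ have first coordinate larger than every entry currently in the tableau, so each Schensted step simply appends to row~$1$ and the accompanying $b$ then goes to the end of the next column in row~$1$; together these append $n+k+1,n+k+2,\dots,2n$ to row~$1$, matching the second phase of $\nIota$. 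Hence $\nIota(\PNRSK(z)) = \PCBS(z)$, completing the argument.
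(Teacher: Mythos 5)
Your proposal is correct and takes essentially the same approach as the paper's proof: same decomposition into $U$ plus the transfer-point insertions plus the tail insertions, same observation that the boxes created by inserting the transfer points in decreasing order occur in strictly increasing rows and weakly decreasing columns, and the same independence argument to conclude that the auxiliary $n+i$ additions coincide with placing $n+1,\dots,n+k$ at the ends of the new odd rows. The only differences are cosmetic (you index the transfer points in increasing rather than decreasing order, so the roles of $c_i$ and $c_{k+1-i}$ are swapped), and the level of detail you flag as "the main obstacle" is not expanded on any further in the paper either.
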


\begin{proof}
Our argument is similar to the proof of Lemma~\ref{pm-lem}.
Let $(a_i,b_i)$ for $i \in [p]$ be the cycles of $z \in \cGn_n$ with $a_i < b_i = z(a_i) \leq n$
and $b_1<b_2<\dots<b_p$, and define $U := \emptyset \fromNRSK (a_1,b_1)\fromNRSK (a_2,b_2) \fromNRSK\cdots \fromNRSK (a_p,b_p)$.
This tableau is partially standard with all even rows, since every insertion $\fromNRSK (a,b)$ with $a<b$ preserves the number of odd rows,
which begins as zero.

Next let $c_1>c_2>\dots>c_k$ denote the transfer points of $z$, so that $z(c_i) = n + i$ for $i \in [k]$.
If $y<x$ then the bumping path of
$y$ inserted into $U\fromRSK x$ is weakly to the left of the bumping path of $x$ inserted into $U$.
This implies that the boxes added by successively Schensted inserting $c_1$, $c_2$, \dots, $c_k$ into $U$ 
must occur in a strictly increasing sequence of rows and a weakly decreasing sequence of columns.
Since $U$ starts out with all even rows, each of these $k$ boxes creates a new odd row,
and the result of Schensted inserting $c_i$ has no dependence on any of the columns after
the box added by Schensted inserting $c_{i-1}$.
It follows that 
$T := U \fromRSK c_1 \fromRSK c_2 \fromRSK \dots \fromRSK c_k$
is standard with $k$ odd rows,
and that
placing the numbers $n+1$, $n+2$, \dots, $n+k$ at the end of  these rows going top to bottom
must give the same result as
$
U \fromNRSK (c_1,n+1)\fromNRSK (c_2,n+2) \fromNRSK \dots \fromNRSK (c_k,n+k).
$
To turn this tableau into $\PCBS(z)$ we insert $\fromNRSK(a,a+1)$ for $a=n+k+1,n+k+3,\dots,2n-1$,
but this just
adds the numbers $n+k+1,n+k+2,\dots,2n$ to the first row.
From these observations, we see that $\PNRSK(z) = \PCBS(z)|_{[n]}= T$ and $\nIota(T) = \PCBS(z)$ as needed.
\end{proof}

\begin{theorem}\label{pnrsk-thm}
The operation $\PNRSK$ defines a bijection from the set of elements of $\cGn_n$ 
with $k$ transfer points
to the set of standard tableaux with $n$ boxes and $k$ odd \textbf{rows}.
\end{theorem}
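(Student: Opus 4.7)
The plan is to mirror the argument used for Theorem~\ref{pmrsk-thm}, replacing row Beissinger insertion with column Beissinger insertion and tracking odd rows instead of odd columns. The two ingredients already in hand are Theorem~\ref{pcbs-thm} (which shows $\PCBS$ is a bijection from $\cI_n$ to standard tableaux with $n$ boxes) and Lemma~\ref{pn-lem} (which shows $\nIota \circ \PNRSK = \PCBS$ restricted to $\cGn_n$).

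First I would establish the counting statement: for any partially standard tableau $T$ and any pair $a \leq b$, the operation $T \fromCBS (a,b)$ preserves the number of rows that have an odd number of entries in $[n]$ when $n < a \leq b$, and increases this count by exactly one when $a \leq n < b$. (When $a \leq b \leq n$ it preserves the parity of every row.) This is immediate from the two cases in Definition~\ref{cRS-def}: for $a < b$ a single box is added in column $j+1$, affecting the parity of exactly one row; and for $a = b = n+k$ something analogous happens at the end of the first column. One just checks these cases directly.

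Next I would analyze how $\PCBS(z)$ is built for $z \in \cGn_n$. By the definition of the descending embedding $\nmap$ and the resulting structure of elements of $\cGn_n$, the integer pairs $(a_i,b_i)$ inserted in the definition of $\PCBS$ partition into three consecutive segments: first the cycles $(a,b)$ with $a < b \leq n$, then the cycles $(c_i, n+i)$ where $c_1 > c_2 > \dots > c_k$ are the transfer points of $z$ (ordered correctly so that the second coordinates $n+1 < n+2 < \dots < n+k$ increase), and finally the cycles $(n+k+2i-1, n+k+2i)$ for $i=1,2,\dots,n-k$. By the counting statement, after the first segment there are $0$ odd rows (in the $[n]$-entry sense), the second segment adds exactly one odd row per insertion, and the third segment adds none. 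So $\PCBS(z)$ restricted to $[n]$, which is $\PNRSK(z)$, has exactly $k$ odd rows.

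For injectivity and surjectivity, Lemma~\ref{pn-lem} gives $\PCBS = \nIota \circ \PNRSK$ on $\cGn_n$. Since $\PCBS$ is a bijection on $\cI_n$ by Theorem~\ref{pcbs-thm} and $\nIota$ is clearly injective on its domain (standard tableaux with $n$ boxes), it follows that $\PNRSK$ is injective on $\cGn_n$, with left inverse $\PCBS^{-1} \circ \nIota$. A dimension count then finishes the proof: the set of elements of $\cGn_n$ with $k$ transfer points is in bijection with the set of involutions in $\cI_n$ with $k$ fixed points (via the embedding $\nmap$), and Theorem~\ref{pcbs-thm} tells us this set has the same cardinality as the set of standard tableaux with $n$ boxes and $k$ odd rows. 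The main (minor) obstacle is simply keeping the bookkeeping of the three segments straight and verifying that the descending ordering of transfer points $c_1 > c_2 > \dots > c_k$ is exactly what is needed so that the second-coordinate ordering in the definition of $\PCBS$ matches the insertion order $n+1, n+2, \dots, n+k$; beyond that the argument is a direct parallel to the proof of Theorem~\ref{pmrsk-thm}.
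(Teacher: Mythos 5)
Your proof follows essentially the same route as the paper: the same observation about how $\fromCBS(a,b)$ affects the number of rows with an odd number of entries in $[n]$, the same use of Lemma~\ref{pn-lem} to obtain injectivity with left inverse $\PCBS^{-1}\circ\nIota$, and the same cardinality count to finish. (The only blemish is a minor index slip: the third segment of cycles is $(n+k+2i-1,n+k+2i)$ for $i=1,\dots,(n-k)/2$, not $i=1,\dots,n-k$.)
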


\begin{proof}
The number of odd rows in $\PNRSK(z)$ for $z \in \cGn_n$ is the number of 
rows in $\PCBS(z)$ with an odd number of entries in $[n]$.
The  operation $\fromCBS (a,b)$ preserves this number
when $n < a \leq b$ and increases it by one when $a \leq n < b$. 
As in the proof of Theorem~\ref{pmrsk-thm},
the definition of $\PCBS(z)$ combined with this observation  makes it clear that 
 the number of odd columns in $\PNRSK(z)$ is the number of transfer points in $z$.
Finally, Lemma~\ref{pn-lem} shows that $\PNRSK$ is an injective map (with left inverse $\PCBS^{-1} \circ \nIota$)
from $\cGn_n$ to the set of standard tableaux with $n$ boxes,
and therefore a bijection 
since both of these sets have size $|I_n|$.
\end{proof}

 Let $\nLambda(z)$
  be the partition shape  of $\PNRSK(z)$  for $z \in \cGn_n$.
 
 \begin{theorem}\label{nleft-thm}
 Suppose $y,z\in \cGn_n$ are distinct and $1<i<n$.
 Then 
$y \nleftrightarrow{i} z$ if and only if $\PNRSK(y) = D_i(\PNRSK(z))$,
so the molecules in $\nGamma$ are the subsets of $\cGn_n$ on which $\nLambda$ 
is constant.
 \end{theorem}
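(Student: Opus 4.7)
The plan is to mirror the proof of Theorem~\ref{mleft-thm} just given, substituting Theorem~\ref{n-cb-thm} for Theorem~\ref{direct-prop} and Lemma~\ref{pn-lem} for Lemma~\ref{pm-lem}. First I would verify that $D_i$ commutes with $\nIota$ on standard tableaux with $n$ boxes whenever $1<i<n$. Indeed, $\nIota$ only adjoins boxes whose entries exceed $n$, so the cells containing $i-1,i,i+1$ are the same in $T$ and in $\nIota(T)$, and their relative order in the row reading word is unchanged. Since $D_i$ preserves both the shape and the set of odd rows of $T$, the map $\nIota$ inserts $n+1,n+2,\dots,2n$ into the same cells of $D_i(T)$ as of $T$. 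Combining this with Lemma~\ref{pn-lem} reduces the theorem to the claim that, for distinct $y,z\in\cGn_n$, one has $y\nleftrightarrow{i}z$ if and only if $y\simCBS{i}z$, i.e., $\PCBS(y) = D_i(\PCBS(z))$.

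For the forward implication, I would assume without loss of generality that $sys<y<tyt=z\leq szs$ for some $\{s,t\}=\{s_{i-1},s_i\}$. In each of the two subcases (according to whether $s=s_{i-1}$ or $s=s_i$), the strict inequalities $sys<y<tyt$ translate into explicit inequalities among $y(i-1),y(i),y(i+1)$, while the weak inequality $z\leq szs$ either gives a strict inequality among the corresponding values of $z$ or forces a degenerate swap of the form $\{z(i-1),z(i)\}=\{i-1,i\}$ or $\{z(i),z(i+1)\}=\{i,i+1\}$. Passing to the marked values $\ymark(i-1),\ymark(i),\ymark(i+1)$ from \eqref{upsilon-def}, these conditions should match exactly the hypotheses of case~2 or case~3 of Theorem~\ref{n-cb-thm}: the subcase $s=s_i,t=s_{i-1}$ corresponds to $z=s_{i-1}ys_{i-1}$ with $\ymark(i+1)$ between $\ymark(i-1)$ and $\ymark(i)$, and the subcase $s=s_{i-1},t=s_i$ corresponds to $z=s_iys_i$ with $\ymark(i-1)$ between $\ymark(i)$ and $\ymark(i+1)$. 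Theorem~\ref{n-cb-thm} then yields $y\simCBS{i}z$. Conversely, if $\PNRSK(y)=D_i(\PNRSK(z))$ then the commutation with $\nIota$ combined with Lemma~\ref{pn-lem} gives $y\simCBS{i}z$; since $y\neq z$, Theorem~\ref{n-cb-thm} places $(y,z)$ into one of the two nontrivial cases above, and reversing the translation recovers the Bruhat inequalities for $y\nleftrightarrow{i}z$. The final statement about molecules then follows because any two standard tableaux of the same shape are connected by a chain of $D_i$ operators, as in the remark after Theorem~\ref{A-thm}.

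The main obstacle is the bookkeeping needed to translate between the Bruhat inequalities defining $\nleftrightarrow{i}$ and the $\ymark$-conditions of Theorem~\ref{n-cb-thm} in the degenerate situations where $y(j)\in\{i-1,i,i+1\}$ for some $j\in\{i-1,i,i+1\}$. In such situations the fixed-point-free property of $y\in\cGn_n$ imposes a rigid paired structure (for example $y(i-1)=i+1$ forces $y(i+1)=i-1$), and the marked value $\ymark(j)=j$ must be used in place of $y(j)$. The weak/strict asymmetry of $\nleftrightarrow{i}$---the weak inequality being $z\leq szs$ in contrast with $sys\leq y$ for $\mleftrightarrow{i}$---corresponds precisely to the different shape of the hypotheses in Theorem~\ref{n-cb-thm} compared with Theorem~\ref{direct-prop}, so one must verify each degenerate subconfiguration directly to confirm the equivalence.
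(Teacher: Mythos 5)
Your proposal is correct and follows essentially the same route as the paper's proof: reduce via Lemma~\ref{pn-lem} and the commutation of $D_i$ with $\nIota$ to the relation $\simCBS{i}$, then translate between the Bruhat conditions defining $\nleftrightarrow{i}$ and the $\ymark$-conditions of Theorem~\ref{n-cb-thm}, with separate handling of the degenerate configurations where some $y(j)\in\{i-1,i,i+1\}$. The case correspondence you set up ($s=s_i,t=s_{i-1}$ with $z=s_{i-1}ys_{i-1}$ and $\ymark(i+1)$ in the middle; $s=s_{i-1},t=s_i$ with $z=s_iys_i$ and $\ymark(i-1)$ in the middle) matches the paper exactly, and the remaining bookkeeping you flag as the "main obstacle" is precisely what the paper carries out.
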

 
 \begin{proof}
 Define $\ymark(j)$ for $j \in \{i-1,i,i+1\}$ as in \eqref{upsilon-def}.
  First suppose $y \nleftrightarrow{i} z$. 
 Without loss of generality we may assume 
that $s ys < y < tyt = z \leq szs$ for some choice of $\{s,t\} =\{s_{i-1},s_i\}$. 
 
  First assume $s=s_{i-1}$ and $t=s_i$, so that $i \neq y(i-1) > y(i)<y(i+1)$.
  If $z=s_{i-1}zs_{i-1}$ then 
    $(i-1,i+1)$ must be a cycle of $y$ and we must have $y(i) < i-1$,
  which means that  $\ymark(i-1)=i-1$ is between $\ymark(i)=y(i)$ and $\ymark(i+1)=i+1$.
  If $z<s_{i-1}zs_{i-1}$ then we must have $y(i-1)<y(i+1)$; since $y \in \cGn_n\subset \Ifpf_{2n}$ has no fixed points,
  this means that    $\{y(i)<y(i-1)<y(i+1)\} $ is disjoint from $ \{i-1,i,i+1\}$,
  so $\ymark(i-1)=y(i-1)$ is again between $\ymark(i)=y(i)$ and $\ymark(i+1)=y(i+1)$.
 In both cases,  Theorem~\ref{n-cb-thm} implies that $y\simCBS{i}z$ so
 \[ 
 \PNRSK(y) = 
 \PCBS(y)|_{[n]} = D_i(\PCBS(z))|_{[n]}
 = D_i(\PCBS(z)|_{[n]}) = D_i(\PNRSK(z))
 \] 
 by the definition of $D_i$ and the fact that $1<i<n$.
  
Next suppose $s=s_i$ and $t=s_{i-1}$, so that $i \neq y(i+1) < y(i) >y(i-1)$. 
What needs to be checked follows by a symmetric argument.
  If $z=s_izs_i$ then
  $(i-1,i+1)$ must be a cycle of $y$ and we must have $y(i) >i+1$,
  which means that  $\ymark(i+1)=i+1$ is between $\ymark(i-1)=i-1$ and $\ymark(i)=y(i)$.
  If $z<s_izs_i$ then we must have $y(i-1)<y(i+1)$; since $y \in \cGn_n\subset \Ifpf_{2n}$ has no fixed points,
  this means that    $\{y(i-1)<y(i+1)<y(i)\} $ is disjoint from $ \{i-1,i,i+1\}$,
  so $\ymark(i+1)=y(i+1)$ is again between $\ymark(i-1)=y(i-1)$ and $\ymark(i)=y(i)$.
Thus, we deduce by Theorem~\ref{n-cb-thm}   that $y\simCBS{i}z$ and
  as above that $\PNRSK(y) = D_i(\PNRSK(z))$.

For the converse statement, suppose $\PNRSK(y)=D_i(\PNRSK(z))$. 
 Then we have
\[\PCBS(y)=\nIota(\PNRSK(y)) = \nIota(D_i(\PNRSK(z))) = D_i(\nIota(\PNRSK(z))) = D_i(\PCBS(z)),\] 
using Lemma~\ref{pn-lem} for the first and last equalities, and the definitions of $D_i$ and $\nIota$ for
the third equality.
 Since we assume  $y\neq z$,    Theorem~\ref{n-cb-thm}   implies that either
\begin{itemize}
\item[(a)]  $z=s_{i-1}ys_{i-1}$ and $\ymark(i+1)$ is between $\ymark(i-1)$ and $\ymark(i)$, or
 \item[(b)] $z=s_{i}ys_{i}$ and  $\ymark(i-1)$ is between $\ymark(i)$ and $\ymark(i+1)$.
 \end{itemize} 
If $\ymark(j) = y(j)$ for all $j \in \{i-1,i,i+1\}$, then it is straightforward to deduce as in the proof of Theorem~\ref{mleft-thm} that 
$y\nleftrightarrow{i} z$.
If this does not occur,
then in case (a) either
\begin{itemize}
\item $(i-1,i+1)$ is a cycle of $y$ and $i+1 < y(i)$, so $s_iys_i < y < s_{i-1}ys_{i-1} = z =s_izs_i$; or
\item $(i,i+1)$ is a cycle of $y$ and $i+1<y(i-1)$, so $s_izs_i < z < s_{i-1}zs_{i-1} = y =s_iys_i$.
\end{itemize}
Similarly, if $\ymark(j) \neq y(j)$ for some $j \in \{i-1,i,i+1\}$ then in case (b) either
\begin{itemize}
\item $(i-1,i+1)$ is a cycle of $y$ and $ y(i)<i-1$, so $s_{i-1}ys_{i-1} < y < s_{i}ys_{i} = z =s_{i-1}zs_{i-1}$; or
\item $(i-1,i)$ is a cycle of $y$ and $y(i+1)<i-1$, so $s_{i-1}zs_{i-1} < z < s_{i}zs_{i} = y =s_{i-1}ys_{i-1}$.
\end{itemize}
In every case we have   $y \nleftrightarrow{i} z$ as needed.
 \end{proof}

\appendix
\section{Proof of Theorem~\ref{n-cb-thm}}\label{app-sect}

This section contains the proof of Theorem~\ref{n-cb-thm}. 
Unfortunately, the only way we know to prove this result is by a very technical case analysis.
Before commencing this, we need some preliminary notation and a few lemmas.

The \defn{bumping path} resulting from Schensted inserting a number $a$ into a tableau $T$ 
is the sequence of positions $(1,b_1),(2,b_2),\dots,(k, b_k)$ of the entries in $T$ that are changed to form $T \fromRSK a$, together with the new box that is added 
to the tableau.
Let $\BPath_{T\from a}$ denote this sequence. 
Let $b_{T\from a}(j) := b_j$ be the column of the $j$th position in the bumping path,
let $\f_{T\from a} := k$ denote the length of the path (which is also the index of the path's ``final row''), and let $\b_{T\from a}(j)$
be the value inserted into row $j$, so that $\b_{T\from a}(1) = a$.
Observe that 
 \[ b_{T\from a}(1)\ge\cdots\ge b_{T\from a}(k)
 \quand
 \b_{T\from a}(1) <\cdots  < \b_{T\from a}(k).\]
For example, if $a=2$ and 
$ T=\ytab{1&3&9\\4&5&6\\7&8}$
so that
$ 
T \fromRSK a  =\ytab{1&2&9\\3&5&6\\4&8 \\ 7}
$
then we have 
$b_{T\from a}(1) = 2$ and $b_{T\from a}(j) = 1$ for $2\leq j\leq \f_{T\from a}=4$, while 
 \[\b_{T\from a}(1)=2<\b_{T\from a}(2)=3<\b_{T\from a}(3)=4< \b_{T\from a}(4)=7.\]

Recall that a \defn{partially standard tableau} is a semistandard tableau with distinct positive entries.
If $i$ and $j$ appear in a tableau $T$ that has all distinct entries, then we write 
$i\prec_T j$ to indicate that $i$ precedes $j$ in the row reading word of $T$. 
If $i-1$, $i$, and $i+1$ are all entries in a partially standard tableau $T$ then we can evaluate $D_i(T)$ by the formula \eqref{D-def},
and it holds as usual that $D_i(D_i(T)) = T$.

Suppose $v$ and $w$ are sequences of distinct positive integers and $v$ contains $i-1$, $i$, and $i+1$ as letters.
We write $v \dK{i} w$ to mean that either 
\begin{itemize}
\item[(1)] $w=v$ when $i$ is between $i-1$ and $i+1$ in $v$, or
\item[(2)] $w$ is obtained from $v$ by swapping $i$ and $i+1$ if $i-1$ is between these numbers in $v$, or
\item[(3)] $w$ is obtained from $v$ by swapping $i-1$ and $i$ if $i+1$ is between these numbers in $v$.
\end{itemize}
When we evaluate $\PRSK(v)$ and $\PRSK(w)$ by the usual Schensted insertion definition,
it follows from Theorem~\ref{A-thm} that $\PRSK(v) = D_i(\PRSK(w))$ if and only if $v \dK{i} w$.

For the rest of this section,
fix $y\in I_n$ and suppose
 $b_1<b_2<\dots<b_k$ are the distinct numbers in $[n]$ with $a_i := y(b_i) \leq b_i$ so that $y=(a_1,b_1)(a_2,b_2)\cdots(a_k,b_k)$.
For $i \in [k]$ let 
\[T_i :=\emptyset \fromCBS (a_1,b_1) \fromCBS (a_2,b_2) \fromCBS\cdots \fromCBS (a_i,b_i)\quand T_0 := \emptyset.\]
We refer to $T_0,T_1,T_2,\dots,T_k$ as the \defn{partial tableaux} for $y$.

\begin{lemma}\label{n1-lem}
Choose indices $1\leq i < j \leq k$.
Suppose $p<q$ are entries in $T_i$ and there are no entries $r$ in $T_j$ with $p<r<q$. Then we have $p\prec_{T_i}q$ if and only if $p\prec_{T_j}q$.
\end{lemma}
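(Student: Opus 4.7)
The plan is to prove this by induction on $j - i$, with the trivial base case $j = i$. For the inductive step, it suffices to prove the single-step version: if $T' = T \fromCBS(a, b)$ with $a \leq b$, and $p < q$ are entries of $T$ such that no entry of $T'$ lies strictly between them, then $p \prec_T q$ iff $p \prec_{T'} q$. This reduces the general lemma because entries of the partial tableaux $T_i \subseteq T_{i+1} \subseteq \cdots \subseteq T_j$ form an increasing chain, so the hypothesis on $T_j$ passes to each intermediate $T_m$.

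The case $a = b$ is immediate: $T'$ is formed from $T$ by appending $a$ to the bottom of the first column, which does not alter the positions of any pre-existing entry of $T$, so the row reading order of $p$ and $q$ is preserved. For the case $a < b$, set $T^* := T \fromRSK a$ so that $T'$ is $T^*$ with $b$ added at the end of a single column. Adding the fresh letter $b$ merely inserts a new symbol into $\row(T^*)$ without moving any other symbol, and in particular leaves the relative order of $p$ and $q$ untouched. Hence it suffices to compare $\row(T)$ with $\row(T^*)$.

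The key ingredient is the classical identity $\PRSK(\row(T) \cdot a) = T \fromRSK a = T^*$, which combined with Theorem~\ref{A-thm}(c) shows that the words $\row(T) \cdot a$ and $\row(T^*)$ are Knuth equivalent. Since the order of $p$ and $q$ in $\row(T) \cdot a$ agrees with their order in $\row(T)$, the remaining task is to verify that Knuth moves preserve this order. But an elementary Knuth move acts on a triple $a' < b' < c'$ by swapping the outer letters $a'$ and $c'$, leaving the middle letter $b'$ in place; this can reverse the relative order of $p$ and $q$ only if $\{p, q\} = \{a', c'\}$, in which case $b'$ is a letter of the word that lies strictly between $p$ and $q$. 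The letters of both $\row(T) \cdot a$ and $\row(T^*)$ are entries of $T'$ (being entries of $T$ together with $a$), so by hypothesis no such $b'$ exists, and no Knuth move can flip $p$ and $q$.

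I do not expect any serious obstacle here; the argument is essentially a clean application of the Knuth equivalence formalism reviewed in Section~\ref{sch-sect}. The only points requiring care are confirming that (i) the final attachment of $b$ really behaves as insertion of a new letter into the reading word, which is immediate from Definition~\ref{cRS-def}, and (ii) the set containment $\{\text{letters of } \row(T) \cdot a,\ \text{letters of } \row(T^*)\} \subseteq \text{entries of } T'$, which is obvious. No detailed case analysis on the positions of $a$, $b$, $p$, $q$ is needed.
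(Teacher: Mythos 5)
Your proof is correct and takes essentially the same approach as the paper: reduce to a single column Beissinger insertion step and split on the cases $a=b$ and $a<b$. The only difference is that for $a<b$ you actually derive, via Knuth equivalence of $\row(T)\cdot a$ and $\row(T^*)$ together with the observation that a Knuth move can only reverse $p$ and $q$ if some letter lies strictly between them, the fact that $\fromRSK a$ preserves the relative reading-word order of $p$ and $q$ --- a fact the paper simply cites as well-known.
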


\begin{proof}
We may assume that $j=i+1$, and after standardizing that $q=p+1$.
If $a_j=b_j$
then the column Beissinger insertion operation $\fromCBS(a_j,b_j)$ clearly does not change the relative order 
of $p$ and $q$ in the row reading word of $T_i$. 
On the other hand, if $a_j<b_j$ then it is well-known that the operation $\fromRSK a_j$ also does not change this order,
nor does adding $b_j$ to the end of a column. 
Therefore $p \prec_{T_i} q$ if and only if  $p\prec_{T_j} q$.
\end{proof}

The following lemma compares two bumping paths.
Here, when we say that one path is strictly (respectively, weakly) to the left of the other path,
we mean that in each row where both paths have positions, the unique position in the first path is 
strictly (respectively, weakly) to the left of the unique position in the other path.

\begin{lemma}\label{n2-lem}
Choose an index $1\leq i < k$.
Suppose $a_i<b_i$ and $a_{i+1}<b_{i+1}$. If $a_i<a_{i+1}$ then the bumping path 
$\BPath_{T_{i-1}\from a_i}$ is strictly to the left of $\BPath_{T_{i}\from a_{i+1}}$,
and if $a_i>a_{i+1}$ then the bumping path $\BPath_{T_{i-1}\from a_i}$ is weakly to the left of $\BPath_{T_{i}\from a_{i+1}}$. 
Moreover, we have $\f_{T_{i-1}\from a_i}<\f_{T_{i}\from a_{i+1}}$ when $a_{i+1}<a_i$ and 
$\f_{T_{i-1}\from a_i}\ge\f_{T_{i}\from a_{i+1}}$ when $a_i<b_i<a_{i+1}<b_{i+1}$.
\end{lemma}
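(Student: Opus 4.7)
The plan is to reduce the claim to the classical result on two consecutive Schensted insertions. Let $S:=T_{i-1}\fromRSK a_i$, so that $T_i$ is obtained from $S$ by appending a single new box containing $b_i$ at position $(r^*,j+1)$, where $j:=b_{T_{i-1}\from a_i}(\f_{T_{i-1}\from a_i})$ and $r^*$ is the first empty row of column $j+1$ in $S$. Since the pairs $(a_l,b_l)$ are inserted in the order $b_1<b_2<\dots<b_k$, every entry of $S$ lies in $\{a_1,b_1,\dots,a_{i-1},b_{i-1},a_i\}$, so $b_i$ is strictly larger than every other entry of $T_i$.

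I then invoke the classical two-successive-insertions lemma applied to $T_{i-1}$ with inputs $a_i$ and $a_{i+1}$: the bumping path $\BPath_{S\from a_{i+1}}$ is strictly right of $\BPath_{T_{i-1}\from a_i}$ with $\f_{S\from a_{i+1}}\leq\f_{T_{i-1}\from a_i}$ when $a_i<a_{i+1}$, and weakly left with $\f_{S\from a_{i+1}}>\f_{T_{i-1}\from a_i}$ when $a_i>a_{i+1}$. Because $T_i$ and $S$ differ only at the single cell $(r^*,j+1)$, and that cell is empty in $S$, the paths $\BPath_{T_i\from a_{i+1}}$ and $\BPath_{S\from a_{i+1}}$ must coincide unless the latter terminates at exactly $(r^*,j+1)$.

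When $a_i>a_{i+1}$, the weakly-left bound combined with the fact that the column coordinates of a bumping path are weakly decreasing from top to bottom forces $\BPath_{S\from a_{i+1}}$ to stay in columns $\leq j$ throughout, so it never enters column $j+1$; thus $\BPath_{T_i\from a_{i+1}}=\BPath_{S\from a_{i+1}}$ and both the weakly-left and the $\f_{T_{i-1}\from a_i}<\f_{T_i\from a_{i+1}}$ conclusions follow directly. When $a_i<a_{i+1}$, either the same argument works, or else $\BPath_{S\from a_{i+1}}$ ends at $(r^*,j+1)$; in that subcase the value being inserted into row $r^*$ in $T_i$ is an entry of $S$ and hence strictly less than $b_i$, causing it to bump $b_i$ at $(r^*,j+1)$, after which $b_i$ is placed at the end of row $r^*+1$, prolonging the path by one row. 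To preserve the strictly-right conclusion across this extra row, I use that the cell $(r^*+1,b_{T_{i-1}\from a_i}(r^*+1))$ lies in $S$ whenever $r^*+1\leq\f_{T_{i-1}\from a_i}$, whence the length of row $r^*+1$ of $S$ is at least $b_{T_{i-1}\from a_i}(r^*+1)$, and so the column at which $b_i$ lands strictly exceeds $b_{T_{i-1}\from a_i}(r^*+1)$. Finally, for the ``moreover'' clause under $a_i<b_i<a_{i+1}<b_{i+1}$, the hypothesis $a_{i+1}>b_i$ makes $a_{i+1}$ strictly larger than every entry of $T_i$, so $\BPath_{T_i\from a_{i+1}}$ consists of a single row and $\f_{T_i\from a_{i+1}}=1\leq\f_{T_{i-1}\from a_i}$.

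The main obstacle will be the subcase in Case 1 where $\BPath_{S\from a_{i+1}}$ ends at $(r^*,j+1)$: the strictly-right assertion in the newly added row $r^*+1$ is not a direct consequence of the classical two-insertion lemma and requires the separate shape-based bookkeeping sketched above in order to verify that the column where $b_i$ finally lands remains to the right of $b_{T_{i-1}\from a_i}(r^*+1)$.
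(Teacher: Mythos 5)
Your overall strategy -- factoring $T_i$ as $S := T_{i-1}\fromRSK a_i$ with a single extra box at $(r^*,j+1)$, invoking the classical two-insertion lemma on $S$, and then analysing when the extra box can perturb $\BPath_{S\from a_{i+1}}$ -- is a sound and genuinely more explicit route than the paper's, which just cites the ``usual bumping path property'' without acknowledging that $T_i\neq S$. However, two of the intermediate claims you use to conclude the paths coincide are incorrect as stated.

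First, in the case $a_i>a_{i+1}$, you assert that the weakly-left bound plus weak monotonicity of columns ``forces $\BPath_{S\from a_{i+1}}$ to stay in columns $\leq j$ throughout.'' This is false: the weakly-left bound only gives, in each row $r\leq \f_{T_{i-1}\from a_i}=:f$, a comparison with the $r$-th column $j_r$ of $\BPath_{T_{i-1}\from a_i}$, and those satisfy $j_r\geq j$, not $\leq j$. For example, take $T_1=\ytab{1&4}$, $(a_2,b_2)=(3,5)$, $T_2=\ytab{1&3\\4&5}$ (so $j=1$, $f=2$), and $a_3=2$: then $\BPath_{S\from 2}=\{(1,2),(2,1),(3,1)\}$ already visits column $2=j+1$ in row $1$. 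The conclusion you want still holds, but for a simpler reason: $(r^*,j+1)$ is empty in $S$, so it can only appear in $\BPath_{S\from a_{i+1}}$ as the terminal cell; since $r^*\leq f$ (because $(f,j)$ is a corner of $S$, so column $j{+}1$ has length $<f$) and the terminal cell lies in a row strictly greater than $f$, that terminal cell is never $(r^*,j+1)$. Second, in the case $a_i<a_{i+1}$ where $\BPath_{S\from a_{i+1}}$ does end at $(r^*,j+1)$, you claim ``the value being inserted into row $r^*$ in $T_i$ is an entry of $S$ and hence strictly less than $b_i$.'' This fails when $r^*=1$ (the value is $a_{i+1}$ itself, not an entry of $S$), and when additionally $a_{i+1}>b_i$ (i.e.\ $a_i<b_i<a_{i+1}<b_{i+1}$), the value is not less than $b_i$: $a_{i+1}$ slides past $b_i$ rather than bumping it, the path stays in row $1$, and the ``prolonged by one row'' analysis does not occur. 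Your final paragraph does treat $a_{i+1}>b_i$ for the $\f$-comparison, but leaves the strictly-left claim unaddressed in that subcase (it is still easy: $a_{i+1}$ lands at the end of row $1$ of $T_i$, in a column strictly larger than the length of row $1$ of $S$, which in turn is $\geq j_1$).

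One further remark: you correctly invoke the classical result as giving $\BPath_{S\from a_{i+1}}$ weakly to the left of $\BPath_{T_{i-1}\from a_i}$ when $a_{i+1}<a_i$, yet you then say ``the weakly-left conclusion follows directly'' as though this matches the lemma statement, which has $\BPath_{T_{i-1}\from a_i}$ weakly to the left of $\BPath_{T_i\from a_{i+1}}$. Those are opposite directions; the lemma statement's direction appears to be reversed relative to both the classical result and the way the paper later applies the lemma (e.g.\ in the proof of Lemma~\ref{prop1-lem}). Your proof actually establishes the corrected direction, but you should flag the mismatch rather than silently eliding it.
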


However, one can have $\f_{T_{i-1}\from a_i}<\f_{T_{i}\from a_{i+1}}$ when $a_i<a_{i+1}<b_i<b_{i+1}$.

\begin{proof}
The first claim follows from the usual bumping path property for Schensted insertion mentioned earlier.
For the second claim, observe that if $a_{i+1}<a_i$ then   $\b_{T_i\from a_{i+1}}(\f_{T\from a_i})<\b_{T_{i-1}\from a_i}(\f_{T_{i-1}\from a_i})$, so $\f_{T_{i-1}\from a_i}<\f_{T_i\from a_{i+1}}$. If $a_i<b_i<a_{i+1}<b_{i+1}$ then we have
$\f_{T_i \from a_{i+1}} = \f_{U \from a_{i+1}}$ for $U := T_{i-1} \fromRSK a_i \neq T_i = T_{i-1} \fromCBS(a_i,b_i)$,
so $\f_{T_{i-1}\from a_i}\ge\f_{T_{i}\from a_{i+1}}$ again holds by the usual Schensted bumping path properties.
\end{proof}

\begin{lemma}\label{n-lem2}
Let $T$ be a partially standard tableau containing $i-1$, $i$, and $i+1$.
If $a\leq b$ are such that $T\fromCBS (a,b)$ is also partially standard, then $D_i(T)  \fromCBS (a,b) = D_i(T \fromCBS(a,b)).$
\end{lemma}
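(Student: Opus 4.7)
The plan is to reduce Lemma~\ref{n-lem2} to the classical commutation fact that for any partially standard tableau $U$ containing $i-1, i, i+1$ and any integer $x \notin \{i-1, i, i+1\}$ with $U \fromRSK x$ partially standard, one has
\[
D_i(U) \fromRSK x \;=\; D_i(U \fromRSK x).
\]
Granted this, the lemma unpacks directly from the two-case structure of Definition~\ref{cRS-def}. In the case $a<b$, the operation $T \fromCBS (a,b)$ consists of performing $T \fromRSK a$ and then dropping $b$ at the end of column $c+1$, where $(r,c)$ is the new box of $T \fromRSK a$; since $D_i$ preserves shape, the position $(r,c)$ and the target slot for $b$ agree between $T$ and $D_i(T)$, and since $b \notin \{i-1, i, i+1\}$ the placement of $b$ at a fixed position trivially commutes with $D_i$ (whose action reads only the relative positions of $i-1, i, i+1$ in the row reading word). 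The case $a=b$ is handled in exactly the same way, using only the ``drop $a$ at the end of the first column'' step.

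The reduced claim itself should follow from Theorem~\ref{A-thm}(b). A case-by-case comparison of the definition \eqref{D-def} of $D_i$ with the word-level cases (1)--(3) of dual Knuth moves shows that $\row(U) \dK{i} \row(D_i(U))$ in all three branches of $D_i$. A dual Knuth move on a word only inspects the relative positions of the letters $i-1, i, i+1$, and those positions are preserved by appending any letter $x \notin \{i-1, i, i+1\}$, so $\row(U) \cdot x \dK{i} \row(D_i(U)) \cdot x$. Since $\PRSK(\row(U) \cdot x) = U \fromRSK x$ and likewise with $D_i(U)$ in place of $U$, Theorem~\ref{A-thm}(b) (possibly after standardizing the reading words, since these are sequences of distinct positive integers rather than permutations of $[n]$) delivers $U \fromRSK x$ and $D_i(U) \fromRSK x$ as $D_i$-related tableaux, which is the claim after applying $D_i$ to both sides and using $D_i^2 = \id$.

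The only bookkeeping is that $a, b \notin \{i-1, i, i+1\}$, which is forced by $T \fromCBS (a,b)$ being partially standard (the inserted values cannot coincide with any entry of $T$, and $T$ already contains $i-1, i, i+1$), and that the intermediate tableau $T \fromRSK a$ remains partially standard and still contains the three letters $i-1, i, i+1$. I do not expect a genuine obstacle here: the argument packages standard facts about Knuth and dual Knuth equivalence around the observation that $\fromCBS(a,b)$ factors through a Schensted insertion followed by a cosmetic box placement that does not interact with any of $i-1, i, i+1$. If anything counts as ``the hard part,'' it is writing out carefully that appending $x$ preserves each of the three dual Knuth branches, but this is routine once the correspondence between \eqref{D-def} and cases (1)--(3) is observed.
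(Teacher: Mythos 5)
Your proposal is correct and follows essentially the same route as the paper. The paper's proof is: for $a<b$, use $\row(D_i(T))\,a \dK{i} \row(T)\,a$ and Theorem~\ref{A-thm} to conclude $D_i(T)\fromRSK a = D_i(T\fromRSK a)$, then observe that appending $b$ to the end of column $j+1$ commutes with $D_i$ since $b\notin\{i-1,i,i+1\}$; your isolation of the "classical commutation fact'' $D_i(U)\fromRSK x = D_i(U\fromRSK x)$ is just this first step made explicit, and your standardization caveat is handled in the paper by its appendix convention extending $\dK{i}$ to arbitrary words of distinct positive integers.
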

\begin{proof}
The desired property is clear if $a=b$ so assume $a<b$. 
Since $\row(D_i(T)) a \dK{i} \row(T) a$, it follows
from Theorem~\ref{A-thm} 
 that $D_i(T) \fromRSK a = \PRSK(\row(D_i(T)) a) = D_i(\PRSK(\row(T)a)) = D_i(T \fromRSK a)$.
Suppose the box of this tableau that is not in $T$ is in column $j$.
Then  adding $b$ to the end of column $j+1$ in $D_i(T) \fromRSK a$
gives
$D_i(T)  \fromCBS (a,b)$ by definition.
However, adding $b$ to the end of column $j+1$ in $D_i(T \fromRSK a)$
also apparently gives $D_i(T\fromCBS(a,b))$ since $b \notin \{i-1,i,i+1\}$, so we have $D_i(T)  \fromCBS (a,b) = D_i(T \fromCBS(a,b))$.
\end{proof}

For the rest of this section we fix $1<i<n$, and we define $\ymark(i-1)$, $\ymark(i)$, and $\ymark(i+1)$
by
\[
\ymark(j) := \begin{cases} 
y(j)&\text{if } y(j) \notin \{i-1,i,i+1\} \\
-j &\text{if }y(j)=j \\
j &\text{if } j \neq y(j) \in \{i-1,i,i+1\}
\end{cases}
\]
 as in \eqref{upsilon-def}. 
We divide the proof of Theorem~\ref{n-cb-thm} into three propositions, following this lemma:

\begin{lemma}\label{prop1-lem}
Suppose $a,b\in\{i-1,i,i+1\}$ have $|b-a|=1$ and $\ymark(a)<\ymark(b)$. Then $a\prec_{\PCBS(y)}b$.
\end{lemma}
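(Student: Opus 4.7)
The plan is to exploit the fact that $|b-a|=1$ leaves no room for an intermediate value. Specifically, because no integer lies strictly between $a$ and $b$, Lemma~\ref{n1-lem} applies trivially to the pair $(p,q)=(a,b)$: whenever both $a$ and $b$ appear in some partial tableau $T_j$, their relative order in the row reading word is the same in every later $T_{j'}$ with $j'\geq j$, and in particular in $T_k=\PCBS(y)$. So it suffices to exhibit $a\prec_{T_j}b$ for the first $T_j$ in which both entries appear.

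Then I would case-split according to how $y$ acts on $\{i-1,i,i+1\}$, since this determines which cycles contain $a$ and $b$, in what order their cycles are inserted (by comparing their larger endpoints), and which of the three defining clauses of $\ymark$ applies. The first case is when $y(a)=b$, so that $a$ and $b$ lie in the single cycle $(a,b)$; here $\ymark(a)=a<b=\ymark(b)$ automatically, and at the moment this cycle is inserted the partial tableau contains only entries in $[a-1]$, so Schensted-inserting $a$ appends it to the end of row $1$ at some column $j_0$, after which $b$ is appended to the empty column $j_0+1$ of row $1$. Hence $a$ sits immediately to the left of $b$ in row $1$.

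The remaining case is that $a$ and $b$ lie in distinct cycles. Enumerating the possibilities compatible with $\ymark(a)<\ymark(b)$, one is left with the subcases: (i) $y(a)=a$ and $y(b)$ is either the third element of $\{i-1,i,i+1\}$ or an outside index; (ii) $y(a)$ equals the third element of $\{i-1,i,i+1\}$ and $y(b)$ is an outside index with $a<y(b)$; (iii) $y(a)$ is an outside index, $y(b)$ equals the third element of $\{i-1,i,i+1\}$, and $y(a)<b$; (iv) both $y(a)$ and $y(b)$ are outside indices with $y(a)<y(b)$. In each subcase, I would identify which cycle is inserted first, locate the resulting entry (either appended to column $1$ as a fixed point or placed via Schensted insertion plus an appended partner), and then compare the bumping path of the subsequent insertion with Lemma~\ref{n2-lem} to deduce that the second of $a,b$ to be placed ends up either in a strictly smaller-indexed row than the first or in the same row strictly to its right, in either case giving $a\prec b$.

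The main obstacle is subcase (iv), where both cycles involve Schensted insertions with nontrivial bumping paths. Here one must rule out the possibility that $b$'s Schensted-inserted partner bumps $a$ into a position that would invert the order, which requires tracking the column profile of the partial tableau carefully and combining the two comparisons furnished by Lemma~\ref{n2-lem} depending on the sign of $y(a)-y(b)$ versus $a-b$. The other technical pitfall is the boundary situation of subcase (i) in which the fixed-point insertion of $a$ happens into an empty tableau, so that $a$ starts at $(1,1)$ rather than in a row of index $\geq 2$; one must then verify that $b$'s subsequent placement is in column $\geq 2$ of row $1$, which follows because the tableau is nonempty (containing $a$) by the time the cycle of $b$ is inserted.
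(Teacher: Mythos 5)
The plan follows the same skeleton as the paper's proof: apply Lemma~\ref{n1-lem} to reduce to showing $a\prec_{T_j}b$ in the first partial tableau $T_j$ containing both letters, then case-split according to how $y$ acts on $\{i-1,i,i+1\}$, and use Lemma~\ref{n2-lem} to compare bumping paths. Your handling of the single-cycle case $y(a)=b$ matches the paper, and your observation that $|b-a|=1$ makes Lemma~\ref{n1-lem} apply vacuously is exactly the right starting point.

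However, your enumeration of the ``$a$ and $b$ in distinct cycles'' situation has a genuine gap: none of subcases (i)--(iv) allows $y(a)=a$ \emph{and} $y(b)=b$. Subcase (i) requires $y(b)$ to be the third element or outside, hence $y(b)\neq b$, and (ii)--(iv) require $y(a)\neq a$. But the hypothesis $\ymark(a)<\ymark(b)$ is perfectly compatible with both $a$ and $b$ being fixed points: then $\ymark(a)=-a<-b=\ymark(b)$ forces $b<a=b+1$. This scenario actually arises in applications of the lemma (for instance in case (IId) of the remark after Proposition~\ref{prop1}, where $i-1,i,i+1$ are all fixed points of $y$), so it cannot be discarded. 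The paper treats it as its very first case: since $b<a$, the cycle $(b,b)$ is inserted before $(a,a)$, and then $a$ is appended to the end of the first column, making $a$ the first letter of $\row(T_j)$; hence $a\prec_{T_j}b$. The fix is short, but its omission means your enumeration is not exhaustive as claimed. Beyond this, your sketch of subcase (iv) is appropriately flagged as the hardest part but remains at the level of intent; the paper's corresponding argument requires a nontrivial further case split on the relative order of $a$, $b$, $y(a)$, $y(b)$ and a careful bumping-path comparison (including treating whether or not inserting $y(b)$ bumps the box containing $a$), which your outline gestures at but does not pin down.
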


\begin{proof}
Let $j$ be the index of the first partial tableau $T_j$ for $y$ that contains both $a$ and $b$.
We write $a\prec b$ to mean that $a \prec_{T_j} b$.
Lemma~\ref{n1-lem} implies that $a\prec_{\PCBS(y)}b$ whenever $a\prec b$,
so it is enough to show that $a \prec b$.

If $a$ and $b$ are both fixed points of $y$, then we must have $b=-\ymark(b)<-\ymark(a)=a$, so $b$ is inserted before $a$ when forming $\PCBS(y) $.
In this case, the partial tableau $T_j$ is formed by adding $a$ to the end of the first column of $T_{j-1}$,
so we have $a\prec b$ as claimed.

Suppose only one of $a$ or $b$ is a fixed point of $y$. Then we must have $y(a)=a$ and $y(b) \neq b$ since $\ymark(a) < \ymark(b)$.
If the cycle of $b$ is inserted before $a$ when forming $\PCBS(y) $,  then it follows as in the previous paragraph that $a\prec b$.
If instead $a$  is inserted first, then we have two sub-cases:
\begin{itemize}
\item[(A)] Assume $y(b)<b$. Then $y(b)<a<b=a+1$ since $|a-b|=1$ and $a$ is inserted first. 
Therefore $T_{j-1}$ is formed from $T_{j-2}$ by adding $a$ to the end of the first column,
and we have $T_j = T_{j-1} \fromCBS (y(b),b)$.
Since $a$ is the only entry in its row of $T_{j-1}$, 
it follows that $a$ remains the last entry in the first column of $T_j $,
so we  have  $a\prec b$.

\item[(B)] Assume $b<y(b)$. Then $a$ appears in the first column of $T_{j-1}$
and
 $T_j$ is formed from $T_{j-1} \fromRSK b$ by adding an extra box containing $y(b)$.
This means that $a$ appears in the first column of $T_j$ while $b$ appears in the first row, so we again have  $a\prec b$.

\end{itemize}

Suppose neither $a$ nor $b$ is a fixed point of $y$.
If $y(a) = b$ then $a=\ymark(a)<\ymark(b)= b$ and 
all entries in $T_{j-1}$ are less than $a$.
In this case, the tableau $T_j = T_{j-1} \fromCBS(a,b)$ is formed from  $T_{j-1}$ by adding $a$ and then $b$ to the end of the first row,
so clearly $a\prec b$.

Assume $y(a) \neq b$ and let   $a':=y(a)$ and $b':=y(b)$. 
If $a<b=a+1$ then we must have $a'<b'$ since $\ymark(a) < \ymark(b)$,
so either $a' < a < b < b'$ or $a < b < a'  < b'$ or $a'<b'<a<b$. The first two possibilities 
will put $b$ in the first row of $T_j$ as in case (B) above, and then  $a\prec b$ necessarily holds.

Assume $a'<b'<a<b=a+1$ so that $a$ is the largest entry in $T_{j-1}$.
If Schensted inserting $b'$ into $T_{j-1}$ bumps the corner box containing $a$,
then $a$ will appear  in $T_{j-1} \fromRSK b'$ at the end of the next row  in some column $C$,
and  $T_j$ will be formed from $T_{j-1}\fromRSK b'$ by adding $b$ to the end of column $C+1$, so $a\prec b$. 

If inserting $b'$ into $T_{j-1}$ does not bump the box containing $a$,
then $T_j$ is formed from $T_{j-2} \fromRSK a' \fromRSK b'$ 
by adding $a$ and $b$ to the end of columns $p+1$ and $q+1$, respectively, where $p$ is the column of the box added
when inserting $a'$ into $T_{j-2}$ and $q$ is the column of the box added when inserting $b'$ into $T_{j-2} \fromRSK a'$. 
We have $p<q$ since the bumping path of $a'$ is strictly to the left of the bumping path of $b'$,
so again $a \prec b$

On the other hand, if $b<a=b+1$, then since $\ymark(a)<\ymark(b)$
we must have $a'<b<a<b'$ or $b<a<a'<b'$ or $a'<b'<b<a$. 
 The first two possibilities 
will put $b$ in the first row of $T_j = T_{j-1} \fromCBS (b,b')$.
In these cases $a=b+1$ cannot also be in the first row of $T_j$,
since if $a$ were in the first row of $T_{j-1}$ then $a$, as the smallest number greater than $b$, would be bumped into the next row when $b$ is inserted
to form $T_j$.
Therefore $a\prec b$ holds as needed.

Finally assume $a'<b'<b<a=b+1$. Then, by Lemma~\ref{n2-lem}, the bumping path that results from
Schensted inserting $a'$ into $T_{j-1}$ is weakly to the left of the bumping path that results from 
Schensted inserting $b'$ into $T_{j-2}$, and the former path also ends in a later row.
Therefore $a$ appears in $T_j$ at the end of a column weakly to the left of the column
containing $b$, so we again have $a\prec b$.
\end{proof}

\begin{proposition}\label{prop1}
Assume $\ymark(i)$ is between $\ymark(i-1)$ and $\ymark(i+1)$. Then $\PCBS(y)=D_i(\PCBS(y))$.
\end{proposition}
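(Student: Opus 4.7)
The plan is to reduce the proposition directly to Lemma~\ref{prop1-lem}. Recall that by the definition of $D_i$ in \eqref{D-def}, we have $D_i(\PCBS(y)) = \PCBS(y)$ precisely when $i$ lies between $i-1$ and $i+1$ in the row reading word $\row(\PCBS(y))$. So the whole task reduces to proving that $i$ is between $i-1$ and $i+1$ in this reading word, which I will establish by a short case split on the two possible orderings of $\ymark(i-1)$, $\ymark(i)$, $\ymark(i+1)$ compatible with the hypothesis.

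First I will note that the hypothesis that $\ymark(i)$ is between $\ymark(i-1)$ and $\ymark(i+1)$ (together with the fact that these three values are pairwise distinct by construction of $\ymark$) means one of exactly two possibilities holds: either $\ymark(i-1)<\ymark(i)<\ymark(i+1)$, or $\ymark(i+1)<\ymark(i)<\ymark(i-1)$. In the first case, Lemma~\ref{prop1-lem} applied with $(a,b)=(i-1,i)$ gives $i-1 \prec_{\PCBS(y)} i$, and applied with $(a,b)=(i,i+1)$ gives $i\prec_{\PCBS(y)} i+1$. Thus $i-1\prec_{\PCBS(y)} i \prec_{\PCBS(y)} i+1$, so $i$ lies between $i-1$ and $i+1$ in $\row(\PCBS(y))$. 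In the second case, the same lemma applied with $(a,b)=(i,i+1)$ (with the roles of ``smaller'' and ``larger'' swapped) gives $i+1\prec_{\PCBS(y)} i$ and $i\prec_{\PCBS(y)} i-1$, so once again $i$ is between $i-1$ and $i+1$ in $\row(\PCBS(y))$.

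In either case the middle clause of \eqref{D-def} applies and $D_i(\PCBS(y))=\PCBS(y)$, which is what we wanted. The only real obstacle in this argument is Lemma~\ref{prop1-lem} itself, which does the heavy lifting via a long case analysis on the cycle structure of $y$ restricted to $\{i-1,i,i+1\}$ and the position at which these values are inserted during the column Beissinger process; but since that lemma is already established, the proof of the proposition is essentially a one-line deduction.
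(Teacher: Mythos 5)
Your proof is correct and follows essentially the same route as the paper's: the paper likewise splits into the two cases $\ymark(i-1)<\ymark(i)<\ymark(i+1)$ and $\ymark(i+1)<\ymark(i)<\ymark(i-1)$, applies Lemma~\ref{prop1-lem} twice to obtain a chain $i-1\prec i\prec i+1$ or $i+1\prec i\prec i-1$ in $\row(\PCBS(y))$, and concludes from the middle clause of~\eqref{D-def}. Your added remark that the three values $\ymark(i-1),\ymark(i),\ymark(i+1)$ are pairwise distinct is a correct (and worth-noting) consequence of~\eqref{upsilon-def} and the injectivity of $y$, though the paper leaves it implicit.
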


\begin{proof}
If $ \ymark(i-1)<\ymark(i)<\ymark(i+1)$ or $ \ymark(i+1)<\ymark(i)<\ymark(i-1)$,
then Lemma~\ref{prop1-lem} implies that 
$i-1\prec_{\PCBS(y)}i\prec_{\PCBS(y)}i+1$ or $i+1\prec_{\PCBS(y)}i\prec_{\PCBS(y)}i-1$,
so $\PCBS(y)=D_i(\PCBS(y))$.
\end{proof}

\begin{figure}[h]
\begin{center}

\tikzset{%
  dot/.style={circle, draw, fill=black, inner sep=0pt, minimum width=4pt},
}
\[
\barr{c}
 \text{Case Ia:}
\\
\barr{|c|c|c|}
\hline
\begin{tikzpicture}
\node at (0,0.400) {};
\node[dot,label=below:$a$] (a) at (0,0) {};
\node[dot,label=below:$b$] (b) at (1,0) {};
\node[dot,label=below:$i-1$] (i-1) at (2,0) {};
\node[dot,label=below:$i$] (i) at (2.5,0) {};
\node[dot,label=below:$i+1$] (i+1) at (3,0) {};
\draw[loosely dotted, line width =1pt](a)--(i-1);
\draw (0,0) to [bend left] (2.5,0);
\draw (1,0) to [bend left] (3,0);
\end{tikzpicture}
&
\begin{tikzpicture}
\node at (1,0.400) {};
\node[dot,label=below:$a$] (a') at (1,0) {};
\node[dot,label=below:$i-1$] (i-1') at (2,0) {};
\node[dot,label=below:$i$] (i') at (2.5,0) {};
\node[dot,label=below:$i+1$] (i+1') at (3,0) {};
\node[dot,label=below:$c$] (c') at (4,0) {};
\draw[loosely dotted, line width =1pt](a')--(i-1');
\draw[loosely dotted, line width =1pt](i+1')--(c');
\draw (1,0) to [bend left] (2.5,0);
\draw (3,0) to [bend left] (4,0);
\end{tikzpicture}
&
\begin{tikzpicture}
\node at (1,0.400) {};
\node[dot,label=below:$i-1$] (i-1'') at (1,0) {};
\node[dot,label=below:$i$] (i'') at (1.5,0) {};
\node[dot,label=below:$i+1$] (i+1'') at (2,0) {};
\node[dot,label=below:$b$] (b'') at (3,0) {};
\node[dot,label=below:$c$] (c'') at (4,0) {};
\draw[loosely dotted, line width =1pt](i+1'')--(c'');
\draw (1.5,0) to [bend left] (3,0);
\draw (2,0) to [bend left] (4,0);
\end{tikzpicture}
\\ \hline 
\earr
\\ \\
 \text{Case Ib:}
\\
\tikzset{%
  dot/.style={circle, draw, fill=black, inner sep=0pt, minimum width=4pt},
}
\barr{|c|}
\hline
\begin{tikzpicture}
\node at (0,0.400) {};
\node[dot,label=below:$i-1$] (i-1) at (0,0) {};
\node[dot,label=below:$i$] (i) at (0.5,0) {};
\node[dot,label=below:$i+1$] (i+1) at (1,0) {};
\draw (0.5,0) to [bend left] (1,0);
\end{tikzpicture}
\\ \hline
\earr
\\ \\
 \text{Case Ic:}
\\
\tikzset{%
  dot/.style={circle, draw, fill=black, inner sep=0pt, minimum width=4pt},
}
\barr{|c|c|}
\hline
\begin{tikzpicture}
\node at (0,0.400) {};
\node[dot,label=below:$a$] (a) at (0,0) {};
\node[dot,label=below:$b$] (b) at (1,0) {};
\node[dot,label=below:$c$] (c) at (2,0) {};
\node[dot,label=below:$i-1$] (i-1) at (3,0) {};
\node[dot,label=below:$i$] (i) at (3.5,0) {};
\node[dot,label=below:$i+1$] (i+1) at (4,0) {};
\draw[loosely dotted, line width =1pt](a)--(i-1);
\draw (a) to [bend left] (i-1);
\draw (b) to [bend left] (i);
\draw (c) to [bend left] (i+1);
\end{tikzpicture}
& 
\begin{tikzpicture}
\node at (1,0.400) {};
\node[dot,label=below:$a$] (a') at (1,0) {};
\node[dot,label=below:$b$] (b') at (2,0) {};
\node[dot,label=below:$i-1$] (i-1') at (3,0) {};
\node[dot,label=below:$i$] (i') at (3.5,0) {};
\node[dot,label=below:$i+1$] (i+1') at (4,0) {};
\node[dot,label=below:$c$] (c') at (5,0) {};
\draw[loosely dotted, line width =1pt](a')--(i-1');
\draw[loosely dotted, line width =1pt](i+1')--(c');
\draw (a') to [bend left] (i-1');
\draw (b') to [bend left] (i');
\draw (i+1') to [bend left] (c');
\end{tikzpicture}
\\\hline
\begin{tikzpicture}
\node at (0,0.400) {};
\node[dot,label=below:$a$] (a) at (0,0) {};
\node[dot,label=below:$i-1$] (i-1) at (1,0) {};
\node[dot,label=below:$i$] (i) at (1.5,0) {};
\node[dot,label=below:$i+1$] (i+1) at (2,0) {};
\node[dot,label=below:$b$] (b) at (3,0) {};
\node[dot,label=below:$c$] (c) at (4,0) {};
\draw[loosely dotted, line width =1pt](a)--(i-1);
\draw[loosely dotted, line width =1pt](i+1)--(c);
\draw (a) to [bend left] (i-1);
\draw (i) to [bend left] (b);
\draw (i+1) to [bend left] (c);
\end{tikzpicture}
&
\begin{tikzpicture}
\node at (1,0.400) {};
\node[dot,label=below:$i-1$] (i-1') at (1,0) {};
\node[dot,label=below:$i$] (i') at (1.5,0) {};
\node[dot,label=below:$i+1$] (i+1') at (2,0) {};
\node[dot,label=below:$a$] (a') at (3,0) {};
\node[dot,label=below:$b$] (b') at (4,0) {};
\node[dot,label=below:$c$] (c') at (5,0) {};
\draw[loosely dotted, line width =1pt](i+1')--(c');
\draw (i-1') to [bend left] (a');
\draw (i') to [bend left] (b');
\draw (i+1') to [bend left] (c');
\end{tikzpicture}
\\\hline
\earr
\\ \\
 \text{Case Id:}
\\
\tikzset{%
  dot/.style={circle, draw, fill=black, inner sep=0pt, minimum width=4pt},
}
\barr{|c|}
\hline
\begin{tikzpicture}
\node at (0,0.400) {};
\node[dot,label=below:$i-1$] (i-1) at (0,0) {};
\node[dot,label=below:$i$] (i) at (0.5,0) {};
\node[dot,label=below:$i+1$] (i+1) at (1,0) {};
\node[dot,label=below:$c$] (c) at (2,0) {};
\draw[loosely dotted, line width =1pt](i+1)--(c);
\draw (0,0) to [bend left] (0.5,0);
\draw (1,0) to [bend left] (2,0);
\end{tikzpicture}
\\ \hline
\earr
\earr
\]
\end{center}
\caption{Possibilities for $y$ when $ \ymark(i-1)<\ymark(i)<\ymark(i+1)$.}\label{caseI-fig}
\end{figure}

\begin{figure}[h]
\begin{center}
\[
\barr{c}
\text{Case IIa:}
\\
\tikzset{%
  dot/.style={circle, draw, fill=black, inner sep=0pt, minimum width=4pt},
}
\barr{|c|c|c|}
\hline
\begin{tikzpicture}
\node at (0,0.400) {};
\node[dot,label=below:$a$] (a) at (0,0) {};
\node[dot,label=below:$b$] (b) at (1,0) {};
\node[dot,label=below:$i-1$] (i-1) at (2,0) {};
\node[dot,label=below:$i$] (i) at (2.5,0) {};
\node[dot,label=below:$i+1$] (i+1) at (3,0) {};
\draw[loosely dotted, line width =1pt](a)--(i-1);
\draw (a) to [bend left] (i);
\draw (b) to [bend left] (i-1);
\end{tikzpicture}
&
\begin{tikzpicture}
\node at (5,0.400) {};
\node[dot,label=below:$a$] (a') at (5,0) {};
\node[dot,label=below:$i-1$] (i-1') at (6,0) {};
\node[dot,label=below:$i$] (i') at (6.5,0) {};
\node[dot,label=below:$i+1$] (i+1') at (7,0) {};
\node[dot,label=below:$c$] (c') at (8,0) {};
\draw[loosely dotted, line width =1pt](a')--(i-1');
\draw[loosely dotted, line width =1pt](i+1')--(c');
\draw (a') to [bend left] (i');
\draw (i-1') to [bend left] (c');
\end{tikzpicture}
&
\begin{tikzpicture}
\node at (10,0.400) {};
\node[dot,label=below:$i-1$] (i-1'') at (10,0) {};
\node[dot,label=below:$i$] (i'') at (10.5,0) {};
\node[dot,label=below:$i+1$] (i+1'') at (11,0) {};
\node[dot,label=below:$b$] (b'') at (12,0) {};
\node[dot,label=below:$c$] (c'') at (13,0) {};
\draw[loosely dotted, line width =1pt](i+1'')--(c'');
\draw (i'') to [bend left] (b'');
\draw (i-1'') to [bend left] (c'');
\end{tikzpicture}
\\\hline\earr
\\
\\
\text{Case IIb:}
\\
\tikzset{%
  dot/.style={circle, draw, fill=black, inner sep=0pt, minimum width=4pt},
}
\barr{|c|c|}
\hline
\begin{tikzpicture}
\node at (0,0.400) {};
\node[dot,label=below:$a$] (a) at (0,0) {};
\node[dot,label=below:$b$] (b) at (1,0) {};
\node[dot,label=below:$c$] (c) at (2,0) {};
\node[dot,label=below:$i-1$] (i-1) at (3,0) {};
\node[dot,label=below:$i$] (i) at (3.5,0) {};
\node[dot,label=below:$i+1$] (i+1) at (4,0) {};
\draw[loosely dotted, line width =1pt](a)--(i-1);
\draw (a) to [bend left] (i+1);
\draw (b) to [bend left] (i);
\draw (c) to [bend left] (i-1);
\end{tikzpicture}
&
\begin{tikzpicture}
\node at (8,0.400) {};
\node[dot,label=below:$a$] (a') at (8,0) {};
\node[dot,label=below:$b$] (b') at (9,0) {};
\node[dot,label=below:$i-1$] (i-1') at (10,0) {};
\node[dot,label=below:$i$] (i') at (10.5,0) {};
\node[dot,label=below:$i+1$] (i+1') at (11,0) {};
\node[dot,label=below:$c$] (c') at (12,0) {};
\draw[loosely dotted, line width =1pt](a')--(i-1');
\draw[loosely dotted, line width =1pt](i+1')--(c');
\draw (a') to [bend left] (i+1');
\draw (b') to [bend left] (i');
\draw (c') to [bend right] (i-1');
\end{tikzpicture}
\\ \hline
\begin{tikzpicture}
\node at (0,0.400) {};
\node[dot,label=below:$a$] (a) at (0,0) {};
\node[dot,label=below:$i-1$] (i-1) at (1,0) {};
\node[dot,label=below:$i$] (i) at (1.5,0) {};
\node[dot,label=below:$i+1$] (i+1) at (2,0) {};
\node[dot,label=below:$b$] (b) at (3,0) {};
\node[dot,label=below:$c$] (c) at (4,0) {};
\draw[loosely dotted, line width =1pt](a)--(i-1);
\draw[loosely dotted, line width =1pt](i+1)--(c);
\draw (a) to [bend left] (i+1);
\draw (b) to [bend right] (i);
\draw (c) to [bend right] (i-1);
\end{tikzpicture}
& 
\begin{tikzpicture}
\node at (8,0.400) {};
\node[dot,label=below:$i-1$] (i-1') at (8,0) {};
\node[dot,label=below:$i$] (i') at (8.5,0) {};
\node[dot,label=below:$i+1$] (i+1') at (9,0) {};
\node[dot,label=below:$a$] (a') at (10,0) {};
\node[dot,label=below:$b$] (b') at (11,0) {};
\node[dot,label=below:$c$] (c') at (12,0) {};
\draw[loosely dotted, line width =1pt](i+1')--(c');
\draw (a') to [bend right] (i+1');
\draw (b') to [bend right] (i');
\draw (c') to [bend right] (i-1');
\end{tikzpicture}
\\ \hline
\earr
\\
\\
\barr{c} \text{Case IIc:}
\\
\tikzset{%
  dot/.style={circle, draw, fill=black, inner sep=0pt, minimum width=4pt},
}
\barr{|c|c|}
\hline
\begin{tikzpicture}
\node at (0,0.400) {};
\node[dot,label=below:$a$] (a) at (0,0) {};
\node[dot,label=below:$i-1$] (i-1) at (1,0) {};
\node[dot,label=below:$i$] (i) at (1.5,0) {};
\node[dot,label=below:$i+1$] (i+1) at (2,0) {};
\draw[loosely dotted, line width =1pt](a)--(i-1);
\draw (a) to [bend left] (i-1);
\end{tikzpicture}
&
\begin{tikzpicture}
\node at (4,0.400) {};
\node[dot,label=below:$i-1$] (i-1') at (4,0) {};
\node[dot,label=below:$i$] (i') at (4.5,0) {};
\node[dot,label=below:$i+1$] (i+1') at (5,0) {};
\node[dot,label=below:$c$] (c') at (6,0) {};
\draw[loosely dotted, line width =1pt](c')--(i+1');
\draw (c') to [bend right] (i-1');
\end{tikzpicture}
\\ \hline
\earr
\earr
\quad
\barr{c}
\text{Case IId:}
\\
\tikzset{%
  dot/.style={circle, draw, fill=black, inner sep=0pt, minimum width=4pt},
}
\barr{|c|}
\hline
\begin{tikzpicture}
\node at (0,0.400) {};
\node[dot,label=below:$i-1$] (i-1) at (0,0) {};
\node[dot,label=below:$i$] (i) at (0.5,0) {};
\node[dot,label=below:$i+1$] (i+1) at (1,0) {};
\end{tikzpicture}
\\ \hline
\earr
\earr
\earr
\]
\end{center}
\caption{Possibilities for $y$ when $ \ymark(i+1)<\ymark(i)<\ymark(i-1)$.}\label{caseII-fig}
\end{figure}

\begin{remark}
We can be more specific about the possibilities for $y$ 
when Proposition~\ref{prop1} applies.
If $ \ymark(i-1)<\ymark(i)<\ymark(i+1)$ then one of the following must occur:
\begin{itemize}

\item[(Ia)] $i-1$ is a fixed point of $y$ and $\ymark(i+1)=y(i+1)>y(i)=\ymark(i)$. 

\item[(Ib)] $i-1$ is a fixed point of $y$ and $y(i+1)=i$. 

\item[(Ic)] 
$y(i-1)<y(i)<y(i+1)$ and $\{i-1,i,i+1\}\cap\{y(i-1),y(i),y(i+1)\}=\varnothing$. 

\item[(Id)]
$y(i-1)=i$ and $y(i+1)>i+1$.
\end{itemize}
If instead $ \ymark(i+1)<\ymark(i)<\ymark(i-1)$ then we must be in one of the following cases:
\begin{itemize}

\item[(IIa)] $i+1$ is a fixed point of $y$ and $\ymark(i-1)=y(i-1)>y(i)=\ymark(i)$.

\item[(IIb)]  $y(i+1)<y(i)<y(i-1)$ and $\{i-1,i,i+1\}\cap\{y(i-1),y(i),y(i+1)\}=\varnothing$.

\item[(IIc)] $i$ and $i+1$ are fixed points of $y$ but $y(i-1)\neq i-1$.

\item[(IId)] $i-1$, $i$, $i+1$ are all fixed points of $y$. 
\end{itemize}
Figures~\ref{caseI-fig} and \ref{caseII-fig} show the arc diagrams of $y$ in these cases.
\end{remark}

Our second proposition goes as follows.

\begin{proposition}\label{prop2}
Assume $\ymark(i+1)$ is between $\ymark(i-1)$ and $\ymark(i)$.  Then $\PCBS(y)=D_i(\PCBS(s_{i-1} ys_{i-1}))$.
\end{proposition}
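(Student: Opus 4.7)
The plan is to set $z := s_{i-1} y s_{i-1}$ and establish $\PCBS(y) = D_i(\PCBS(z))$ by a case analysis paralleling the remark following Proposition~\ref{prop1}. First note that the cycles of $z$ are obtained from those of $y$ by the substitution $i-1 \leftrightarrow i$, so when both lists of cycles are sorted by increasing second coordinate, the two resulting insertion sequences for $\PCBS(y)$ and $\PCBS(z)$ coincide outside of the (at most two) cycles that meet $\{i-1,i\}$.

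The main reduction uses Lemma~\ref{n-lem2}: once a partial tableau contains all of $i-1$, $i$, and $i+1$, the operator $D_i$ commutes with every subsequent column Beissinger insertion $\fromCBS(a,b)$ whose values $a,b$ avoid $\{i-1,i,i+1\}$. Let $T_j^{(y)}$ and $T_j^{(z)}$ denote the partial tableaux for $y$ and $z$, and let $j$ be the smallest index after which no remaining cycle of either involution touches $\{i-1,i,i+1\}$. Both $T_j^{(y)}$ and $T_j^{(z)}$ then contain $i-1$, $i$, $i+1$, and all subsequent insertions are identical for $y$ and $z$. It therefore suffices to verify the identity $T_j^{(y)} = D_i(T_j^{(z)})$ at this critical moment, as the remaining insertions propagate it to $\PCBS(y) = D_i(\PCBS(z))$.

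To verify the critical identity, I would enumerate the local configurations of $y$ on $\{i-1,i,i+1\}$ compatible with the hypothesis, split into the two sub-cases $\ymark(i-1) < \ymark(i+1) < \ymark(i)$ and $\ymark(i) < \ymark(i+1) < \ymark(i-1)$. In each configuration, Lemma~\ref{prop1-lem} pins down enough of the relative order of $i-1$, $i$, $i+1$ in the row reading word of $T_j^{(y)}$ to conclude that $i+1$ lies between $i-1$ and $i$, so that $D_i$ acts on $T_j^{(y)}$ as the transposition of the entries $i-1$ and $i$ by \eqref{D-def}. A direct computation of both partial tableaux, using Lemma~\ref{n1-lem} to track the relative order of unrelated entries across insertions and Lemma~\ref{n2-lem} to compare bumping paths when a nontrivial two-cycle is involved, then confirms that this transposition indeed carries $T_j^{(y)}$ to $T_j^{(z)}$.

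The main obstacle is the family of configurations in which both $i-1$ and $i$ lie in nontrivial two-cycles of $y$ whose partners straddle $\{i-1,i,i+1\}$, for instance when $y$ has cycles $(a,i-1)$ and $(i,b)$ with $a < i-1$ and $b > i+1$. There, the bumping paths initiated by consecutive insertions can interact nontrivially, and one must use Lemma~\ref{n2-lem} to verify case by case that swapping the roles of $i-1$ and $i$ in the insertion sequence produces the same shapes and preserves the positions of all other entries; the crucial observation is that $i-1$ and $i$ behave interchangeably during Schensted bumping so long as neither is already present in the tableau being modified.
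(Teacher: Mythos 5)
Your overall strategy matches the paper's: set $z := s_{i-1}ys_{i-1}$, reduce to a ``critical moment'' via Lemma~\ref{n-lem2} (since the insertion sequences for $y$ and $z$ agree after the last cycle meeting $\{i-1,i,i+1\}$), and then run a case analysis on the local configuration of $y$. This is exactly what the paper does, so the plan is sound. The paper also observes that one half of the configurations (those with $\ymark(i)<\ymark(i+1)<\ymark(i-1)$) are obtained from the other half by interchanging $y$ and $z$, which lets it verify $\PCBS(y)=D_i(\PCBS(z))$ in only cases (1a)--(1c); you propose checking both families directly, which is correct but roughly doubles the work.

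There is, however, a genuine gap in the way you propose to determine the relative order of $i-1$, $i$, $i+1$ at the critical moment. You assert that Lemma~\ref{prop1-lem} ``pins down enough of the relative order \ldots\ to conclude that $i+1$ lies between $i-1$ and $i$.'' But Lemma~\ref{prop1-lem} only applies to pairs $a,b\in\{i-1,i,i+1\}$ with $|b-a|=1$. In the sub-case $\ymark(i-1)<\ymark(i+1)<\ymark(i)$ it yields $i-1\prec_{\PCBS(y)}i$ and $i+1\prec_{\PCBS(y)}i$, but it says nothing about whether $i-1\prec i+1$ or $i+1\prec i-1$; the former gives $D_i=s_{i-1}$ (as needed, since $z=s_{i-1}ys_{i-1}$), while the latter would give $D_i=s_i$. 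Establishing $i-1\prec i+1$ is exactly where the hard work lies: the paper proves it case by case via Lemma~\ref{n2-lem} and explicit bumping-path comparisons, interleaved with (not separate from) verifying the tableau identity $T_j^{(z)}=s_{i-1}(T_j^{(y)})$. Your proposal treats the ordering question as settled by Lemma~\ref{prop1-lem} and defers only the tableau identity to ``direct computation,'' but both facts require the same technical Lemma~\ref{n2-lem} analysis, and neither can be skipped. So while the architecture of your argument is right, the critical step is asserted rather than proved, and the cited lemma does not support the claim as stated.
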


\begin{figure}[h]
\begin{center}
\[
\barr{c}
\text{Cases 1a $\leftrightarrow$ 2a:}
\\
\tikzset{%
  dot/.style={circle, draw, fill=black, inner sep=0pt, minimum width=4pt},
}
\barr{|ccc|}
\hline
\begin{tikzpicture}[baseline=(z.base)]
\node (z) at (0,0.000) {};
\node at (0,0.400) {};
\node[dot,label=below:$a$] (a) at (0,0) {};
\node[dot,label=below:$b$] (b) at (1,0) {};
\node[dot,label=below:$i-1$] (i-1) at (2,0) {};
\node[dot,label=below:$i$] (i) at (2.5,0) {};
\node[dot,label=below:$i+1$] (i+1) at (3,0) {};
\draw[loosely dotted, line width =1pt](a)--(i-1);
\draw (0,0) to [bend left] (3,0);
\draw (1,0) to [bend left] (2.5,0);
\end{tikzpicture}
& \longleftrightarrow &
\begin{tikzpicture}[baseline=(z.base)]
\node (z) at (7,0.000) {};
\node at (7,0.400) {};
\node[dot,label=below:$a$] (a') at (7,0) {};
\node[dot,label=below:$b$] (b') at (8,0) {};
\node[dot,label=below:$i-1$] (i-1') at (9,0) {};
\node[dot,label=below:$i$] (i') at (9.5,0) {};
\node[dot,label=below:$i+1$] (i+1') at (10,0) {};
\draw[loosely dotted, line width =1pt](a')--(i-1');
\draw (7,0) to [bend left] (10,0);
\draw (8,0) to [bend left] (9,0);
\end{tikzpicture}
\\ \hline
\begin{tikzpicture}[baseline=(z.base)]
\node (z) at (0,0.000) {};
\node at (0,0.400) {};
\node[dot,label=below:$a$] (a) at (0,0) {};
\node[dot,label=below:$i-1$] (i-1) at (1,0) {};
\node[dot,label=below:$i$] (i) at (1.5,0) {};
\node[dot,label=below:$i+1$] (i+1) at (2,0) {};
\node[dot,label=below:$c$] (c) at (3,0) {};
\draw[loosely dotted, line width =1pt](a)--(i-1);
\draw[loosely dotted, line width =1pt](i+1)--(c);
\draw (0,0) to [bend left] (2,0);
\draw (1.5,0) to [bend left] (3,0);
\end{tikzpicture}
& \longleftrightarrow &
\begin{tikzpicture}[baseline=(z.base)]
\node (z) at (7,0.000) {};
\node at (7,0.400) {};
\node[dot,label=below:$a$] (a') at (7,0) {};
\node[dot,label=below:$i-1$] (i-1') at (8,0) {};
\node[dot,label=below:$i$] (i') at (8.5,0) {};
\node[dot,label=below:$i+1$] (i+1') at (9,0) {};
\node[dot,label=below:$c$] (c') at (10,0) {};
\draw[loosely dotted, line width =1pt](a')--(i-1');
\draw[loosely dotted, line width =1pt](i+1')--(c');
\draw (7,0) to [bend left] (9,0);
\draw (8,0) to [bend left] (10,0);
\end{tikzpicture}
\\ \hline
\begin{tikzpicture}[baseline=(z.base)]
\node (z) at (0,0.000) {};
\node at (0,0.400) {};
\node[dot,label=below:$i-1$] (i-1) at (0,0) {};
\node[dot,label=below:$i$] (i) at (0.5,0) {};
\node[dot,label=below:$i+1$] (i+1) at (1,0) {};
\node[dot,label=below:$b$] (b) at (2,0) {};
\node[dot,label=below:$c$] (c) at (3,0) {};
\draw[loosely dotted, line width =1pt](i+1)--(c);
\draw (0.5,0) to [bend left] (3,0);
\draw (1,0) to [bend left] (2,0);
\end{tikzpicture}
& \longleftrightarrow &
\begin{tikzpicture}[baseline=(z.base)]
\node (z) at (7,0.000) {};
\node at (7,0.400) {};
\node[dot,label=below:$i-1$] (i-1') at (7,0) {};
\node[dot,label=below:$i$] (i') at (7.5,0) {};
\node[dot,label=below:$i+1$] (i+1') at (8,0) {};
\node[dot,label=below:$b$] (b') at (9,0) {};
\node[dot,label=below:$c$] (c') at (10,0) {};
\draw[loosely dotted, line width =1pt](i+1')--(c');
\draw (7,0) to [bend left] (10,0);
\draw (8,0) to [bend left] (9,0);
\end{tikzpicture}
\\ \hline
\earr
\\
\\
\text{Cases 1b $\leftrightarrow$ 2b:}
\\
\tikzset{%
  dot/.style={circle, draw, fill=black, inner sep=0pt, minimum width=4pt},
}
\barr{|ccc|}
\hline
\begin{tikzpicture}[baseline=(z.base)]
\node (z) at (0,0.000) {};
\node at (0,0.400) {};
\node[dot,label=below:$a$] (a) at (0,0) {};
\node[dot,label=below:$b$] (b) at (1,0) {};
\node[dot,label=below:$c$] (c) at (2,0) {};
\node[dot,label=below:$i-1$] (i-1) at (3,0) {};
\node[dot,label=below:$i$] (i) at (3.5,0) {};
\node[dot,label=below:$i+1$] (i+1) at (4,0) {};
\draw[loosely dotted, line width =1pt](a)--(i-1);
\draw (0,0) to [bend left] (3,0);
\draw (1,0) to [bend left] (4,0);
\draw (2,0) to [bend left] (3.5,0);
\end{tikzpicture}
& \longleftrightarrow &
\begin{tikzpicture}[baseline=(z.base)]
\node (z) at (8,0.000) {};
\node at (8,0.400) {};
\node[dot,label=below:$a$] (a') at (8,0) {};
\node[dot,label=below:$b$] (b') at (9,0) {};
\node[dot,label=below:$c$] (c') at (10,0) {};
\node[dot,label=below:$i-1$] (i-1') at (11,0) {};
\node[dot,label=below:$i$] (i') at (11.5,0) {};
\node[dot,label=below:$i+1$] (i+1') at (12,0) {};
\draw[loosely dotted, line width =1pt](a')--(i-1');
\draw (8,0) to [bend left] (11.5,0);
\draw (9,0) to [bend left] (12,0);
\draw (10,0) to [bend left] (11,0);
\end{tikzpicture}
\\ \hline
\begin{tikzpicture}[baseline=(z.base)]
\node (z) at (0,0.000) {};
\node at (0,0.400) {};
\node[dot,label=below:$a$] (a) at (0,0) {};
\node[dot,label=below:$b$] (b) at (1,0) {};
\node[dot,label=below:$i-1$] (i-1) at (2,0) {};
\node[dot,label=below:$i$] (i) at (2.5,0) {};
\node[dot,label=below:$i+1$] (i+1) at (3,0) {};
\node[dot,label=below:$c$] (c) at (4,0) {};
\draw[loosely dotted, line width =1pt](a)--(i-1);
\draw[loosely dotted, line width =1pt](i+1)--(c);
\draw (0,0) to [bend left] (2,0);
\draw (1,0) to [bend left] (3,0);
\draw (2.5,0) to [bend left] (4,0);
\end{tikzpicture}
& \longleftrightarrow &
\begin{tikzpicture}[baseline=(z.base)]
\node (z) at (8,0.000) {};
\node at (8,0.400) {};
\node[dot,label=below:$a$] (a') at (8,0) {};
\node[dot,label=below:$b$] (b') at (9,0) {};
\node[dot,label=below:$i-1$] (i-1') at (10,0) {};
\node[dot,label=below:$i$] (i') at (10.5,0) {};
\node[dot,label=below:$i+1$] (i+1') at (11,0) {};
\node[dot,label=below:$c$] (c') at (12,0) {};
\draw[loosely dotted, line width =1pt](a')--(i-1');
\draw[loosely dotted, line width =1pt](i+1')--(c');
\draw (8,0) to [bend left] (10.5,0);
\draw (9,0) to [bend left] (11,0);
\draw (10,0) to [bend left] (12,0);
\end{tikzpicture}
\\ \hline
\begin{tikzpicture}[baseline=(z.base)]
\node (z) at (0,0.000) {};
\node at (0,0.400) {};
\node[dot,label=below:$a$] (a) at (0,0) {};
\node[dot,label=below:$i-1$] (i-1) at (1,0) {};
\node[dot,label=below:$i$] (i) at (1.5,0) {};
\node[dot,label=below:$i+1$] (i+1) at (2,0) {};
\node[dot,label=below:$b$] (b) at (3,0) {};
\node[dot,label=below:$c$] (c) at (4,0) {};
\draw[loosely dotted, line width =1pt](a)--(i-1);
\draw[loosely dotted, line width =1pt](i+1)--(c);
\draw (0,0) to [bend left] (1,0);
\draw (2,0) to [bend left] (3,0);
\draw (1.5,0) to [bend left] (4,0);
\end{tikzpicture}
& \longleftrightarrow &
\begin{tikzpicture}[baseline=(z.base)]
\node (z) at (8,0.000) {};
\node at (8,0.400) {};
\node[dot,label=below:$a$] (a') at (8,0) {};
\node[dot,label=below:$i-1$] (i-1') at (9,0) {};
\node[dot,label=below:$i$] (i') at (9.5,0) {};
\node[dot,label=below:$i+1$] (i+1') at (10,0) {};
\node[dot,label=below:$b$] (b') at (11,0) {};
\node[dot,label=below:$c$] (c') at (12,0) {};
\draw[loosely dotted, line width =1pt](a')--(i-1');
\draw[loosely dotted, line width =1pt](i+1')--(c');
\draw (8,0) to [bend left] (9.5,0);
\draw (10,0) to [bend left] (11,0);
\draw (9,0) to [bend left] (12,0);
\end{tikzpicture}
\\ \hline
\begin{tikzpicture}[baseline=(z.base)]
\node (z) at (0,0.000) {};
\node at (0,0.400) {};
\node[dot,label=below:$i-1$] (i-1) at (0,0) {};
\node[dot,label=below:$i$] (i) at (0.5,0) {};
\node[dot,label=below:$i+1$] (i+1) at (1,0) {};
\node[dot,label=below:$a$] (a) at (2,0) {};
\node[dot,label=below:$b$] (b) at (3,0) {};
\node[dot,label=below:$c$] (c) at (4,0) {};
\draw[loosely dotted, line width =1pt](i+1)--(c);
\draw (0,0) to [bend left] (2,0);
\draw (0.5,0) to [bend left] (4,0);
\draw (1,0) to [bend left] (3,0);
\end{tikzpicture}
& \longleftrightarrow &
\begin{tikzpicture}[baseline=(z.base)]
\node (z) at (8,0.000) {};
\node at (8,0.400) {};
\node[dot,label=below:$i-1$] (i-1') at (8,0) {};
\node[dot,label=below:$i$] (i') at (8.5,0) {};
\node[dot,label=below:$i+1$] (i+1') at (9,0) {};
\node[dot,label=below:$a$] (a') at (10,0) {};
\node[dot,label=below:$b$] (b') at (11,0) {};
\node[dot,label=below:$c$] (c') at (12,0) {};
\draw[loosely dotted, line width =1pt](i+1')--(c');
\draw (8,0) to [bend left] (12,0);
\draw (8.5,0) to [bend left] (10,0);
\draw (9,0) to [bend left] (11,0);
\end{tikzpicture}
\\ \hline
\earr
\\
\\
\text{Cases 1c $\leftrightarrow$ 2c:}
\\
\tikzset{%
  dot/.style={circle, draw, fill=black, inner sep=0pt, minimum width=4pt},
}
\barr{|ccc|}
\hline
\begin{tikzpicture}[baseline=(z.base)]
\node (z) at (0,0.000) {};
\node at (0,0.400) {};
\node[dot,label=below:$i-1$] (i-1) at (0,0) {};
\node[dot,label=below:$i$] (i) at (0.5,0) {};
\node[dot,label=below:$i+1$] (i+1) at (1,0) {};
\node[dot,label=below:$c$] (c) at (2,0) {};
\draw[loosely dotted, line width =1pt](i+1)--(c);
\draw (0,0) to [bend left] (1,0);
\draw (0.5,0) to [bend left] (2,0);
\end{tikzpicture}
& \longleftrightarrow &
\begin{tikzpicture}[baseline=(z.base)]
\node (z) at (6,0.000) {};
\node at (6,0.400) {};
\node[dot,label=below:$i-1$] (i-1') at (6,0) {};
\node[dot,label=below:$i$] (i') at (6.5,0) {};
\node[dot,label=below:$i+1$] (i+1') at (7,0) {};
\node[dot,label=below:$c$] (c') at (8,0) {};
\draw[loosely dotted, line width =1pt](i+1')--(c');
\draw (6,0) to [bend left] (8,0);
\draw (6.5,0) to [bend left] (7,0);
\end{tikzpicture}
\\ \hline
\earr
\earr
\]
\end{center}
\caption{Possibilities for $y$ when  $\ymark(i+1)$ is between $\ymark(i-1)$ and $\ymark(i)$.} \label{case12-fig}
\end{figure}

\begin{proof}
In this proof let $z := s_{i-1} ys_{i-1}$. We wish to show that $\PCBS(y) = D_i(\PCBS(z))$.
By hypothesis either
$\ymark(i-1)<\ymark(i+1)<\ymark(i)$ or $ \ymark(i)<\ymark(i+1)<\ymark(i-1)$.
If the first set of inequalities holds then we must be in one of the following cases:
\begin{itemize}
\item[(1a)] $i-1$ is a fixed point of $y$ and $\ymark(i+1)=y(i+1)<y(i)=\ymark(i)$. 

\item[(1b)]   $y(i-1)<y(i+1)<y(i)$ and $\{i-1,i,i+1\}\cap\{y(i-1),y(i),y(i+1)\}=\varnothing$. 

\item[(1c)]  $y(i-1)=i+1$ and $y(i)>i+1$. 
\end{itemize}
If instead $\ymark(i)<\ymark(i+1)<\ymark(i-1)$ then we must be in one of these cases:
\begin{itemize}
\item[(2a)] $i$ is a fixed point of $y$ and $\ymark(i+1)=y(i+1)<y(i-1)=\ymark(i-1)$.

\item[(2b)]  $y(i)<y(i+1)<y(i-1)$, $\{i-1,i,i+1\}\cap\{y(i-1),y(i),y(i+1)\}=\varnothing$.

\item[(2c)] $y(i)=i+1$ and $y(i-1)>i+1$.

\end{itemize}
Figure~\ref{case12-fig}  shows the possibilities for the arc diagrams of $y$ in these cases.

Notice that cases (2a), (2b), and (2c) are obtained from cases (1a), (1b), and (1c) by interchanging $y$ and $z$. 
Since $D_i$ is an involution, it suffices to show that $\PCBS(y) = D_i(\PCBS(z))$ just in cases (1a), (1b), and (1c).
We consider each of these it turn.
Throughout this proof,  $a<b<c$ are the integers with $\{a,b,c\} = \{ y(i-1),y(i),y(i+1)\}$.
\begin{itemize}
\item[(1a)] Assume $i-1$ is a fixed point of $y$ and $\ymark(i+1)=y(i+1)<y(i)=\ymark(i)$. 
Then we are in one of the three possible subcases indicated in Figure~\ref{case12-fig}:
\begin{itemize}
\item[i.] We could have $y=\cdots(i-1,i-1)(b,i)(a,i+1)\cdots$ and $z=\cdots(b,i-1)(i,i)(a,i+1)\cdots$ where $a<b<c=i-1$. 
In this case, denote the partial tableau for $y$ obtained just before inserting $(i-1,i-1)$ by $T$. This is also the partial tableau for $z$ obtained just before inserting $(b,i-1)$.
Then let $T_y$, $T'_y$, and $T''_y$ be the partial tableaux for $y$ obtained just after inserting $(i-1,i-1)$,  $(b,i)$, and $(a,i+1)$, respectively. 
Define $T_z$, $T'_z$, and $T''_z$
relative to $z$ similarly.\footnote{
That is, 
let
$T_z$, $T'_z$, and $T''_z$ be the partial tableaux for $z$ obtained just after inserting $(b,i-1)$,  $(i,i)$, and $(a,i+1)$, respectively.
In the next few arguments we will often make similar definitions of $T_y$, $T'_y$, $T''_y$ 
and $T_z$, $T'_z$, $T''_z$:  the first three objects will be partial tableaux for $y$ obtained after inserting certain cycles of $y$,
while the last three objects will be partial tableaux for $z$ obtained after inserting  corresponding (but possibly different) cycles of $z$.
}
Now consider the insertions $T_y\fromCBS(b,i)$ and $T\fromCBS(b,i-1)$. If inserting $b$ leads to a new box with entry $d<i-1$, then $i-1$ has no effect on the insertion $T_y\fromCBS(b,i)$, so we have $T'_z=s_{i-1}(T'_y)$. If $d=i-1$, then the last bump of this insertion 
will involve some $d'<i-1$ bumping $i-1$ to the next row.
For $T\fromCBS(b,i-1)$ this will bump $d'$ into a new row and then put $i-1$ in the last box of the second column. As a result, we have $T'_z=s_{i-1}(T'_y)=D_{i}(T'_y)$. 
After inserting $(a,i+1)$, we have $i-1\prec_{T''_y} i+1\prec_{T''_y} i$ by Lemma~\ref{n2-lem}. Also, we have $T''_z=s_{i-1}(T''_y) = D_i(T''_y)$. By Lemmas~\ref{n1-lem} and \ref{n-lem2}, 
it follows that
\[i-1\prec_{\PCBS(y)} i+1\prec_{\PCBS(y)} i\quand \PCBS(z)=D_i(\PCBS(y)).\]

\item[ii.] We could have 
\[y=\cdots(i-1,i-1)(a,i+1)\cdots(i,c)\cdots \quand z=\cdots(i,i)(a,i+1)\cdots(i-1,c)\cdots\] where $a<b=i-1<i+1<c$. 
In this case, denote the partial tableau for $y$  obtained just before inserting $(i-1,i-1)$ by $T$. This is also the partial tableau for $z$ obtained just before inserting $(i,i)$.
Then let $T_y$ be the partial tableau for $y$ obtained just after inserting $(i-1,i-1)$, 
let $T'_y$ be the partial tableau for $y$ obtained just after inserting $(a,i+1)$, 
let $T''_y$ be the partial tableau for $y$ obtained just before inserting $(i,c)$, 
and let $T'''_y$ be the partial tableau for $y$ obtained just after inserting $(i,c)$. Define $T_z$, $T'_z$, $T''_z$, and $T'''_z$ relative to $z$ similarly.
Then we have $i-1\prec_{T'_y} i+1$ and by Lemma~\ref{n1-lem}, $i-1\prec_{T''_y} i+1$. Also, $T''_z$ is just $T''_y$ after replacing $i-1$ by $i$. After the insertion $\fromCBS(i,c)$, we get $i-1\prec_{T'''_y} i+1\prec_{T'''_y} i$. By an argument similar to case 1a(i), we have $T'''_z=s_{i-1}(T'''_y)=D_{i}(T'''_y)$, so Lemmas~\ref{n1-lem} and \ref{n-lem2} imply that
\[i-1\prec_{\PCBS(y)} i+1\prec_{\PCBS(y)} i\quand \PCBS(z)=D_i(\PCBS(y)).\]

\item[iii.] Finally, we could have $y=\cdots(i-1,i-1)\cdots(i+1,b)\cdots(i,c)\cdots$ and $z=\cdots(i,i)\cdots(i+1,b)\cdots(i-1,c)\cdots$ where $a=i-1<i+1<b<c$.
In this case, the proof is the same as in subcase ii after replacing any references to $(a,i+1)$ by $(i+1,b)$.
\end{itemize}

\item[(1b)] Assume $y(i-1)=a<y(i+1)=b<y(i)=c$ and $\{i-1,i,i+1\}\cap\{a,b,c\}=\varnothing$.  Then we are in one of 
 the four possible subcases indicated in Figure~\ref{case12-fig}:
\begin{itemize}
\item[i.] We could have $y=\cdots(a,i-1)(c,i)(b,i+1)\cdots$ and $  z=\cdots(c,i-1)(a,i)(b,i+1)\cdots$ where $a<b<c<i-1$,
so that $y$ and $z$ have arc diagrams
\[
  y=\arcstart
{
*{a}  \arc{1.5}{rrr}  & *{b}$\cdots$ \arc{2}{rrrr} &*{c}  \arc{1}{rr}     & *{\bullet} & *{\bullet} & *{\bullet}   
}
\arcstop
\quand   z=\arcstart
{
*{a}   \arc{2}{rrrr}  & *{b} \arc{2}{rrrr} & *{c}  \arc{0.5}{r}     & *{\bullet} & *{\bullet} & *{\bullet}   
}
\arcstop.
\]
We address this case by considering the insertions into each row recursively. 
Denote the partial tableau for $y$ obtained just before inserting $(a,i-1)$ by $T$. This is also the partial tableau for $z$ obtained just before inserting $(c,i-1)$.
Let
$T_y$, $T'_y$, and  $T''_y$  be the partial tableaux for $y$ obtained just after inserting $(a,i-1)$, $(c,i)$, and $(b,i+1)$. 
Define $T_z$, $T'_z$, and $T''_z$ relative to $z$ similarly. 
Then write 
\[
\ba \b^y_a(k) &:= \b_{T\from a}(k),
\\
\b^y_c(k) &:= \b_{T_y\from c}(k),
\\
\b^y_b(k) &:= \b_{T'_y\from b}(k).
\ea\]
 Define $\b^z_a(k)$, $\b^z_b(k)$ and $\b^z_c(k)$ similarly. Also, write 
\[
\f_y(a):=\f_{T\from a},
\quad 
\f_y(c):=\f_{T_y\from c},
\quand
\f_y(b):=\f_{T'_y\from b}.
\] 
Then define $\f_z(a)$, $\f_z(b)$, and $\f_z(c)$ relative to $z$ similarly.
If \[j<m:=\min\{\f_y(a),\f_y(b),\f_y(c),\f_z(a),\f_z(b),\f_z(c)\},\] then by Lemma~\ref{n2-lem} we have $\b^y_{a}(j+1)< \b^y_{b}(j+1)< \b^y_{c}(j+1)$. Since 
\begin{align*}
\b^y_a(j)\b^y_c(j)\b^y_b(j)&\K{}\b^y_c(j)\b^y_a(j)\b^y_b(j)\\
&=\b^z_c(j)\b^z_a(j)\b^z_b(j),
\end{align*}
we see that the $j$th rows of $T''_y$ and $T''_z$ are the same. 
Then without loss of generality, we can assume $m=1$. By Lemma~\ref{n2-lem}, we have 
\[\f_y(c)<\f_y(b)\quand \f_z(c)<\f_z(a).\] 
First assume $\f_y(a)>\f_y(c)=1<\f_y(b)$. Consider the first rows of $T''_y$ and $T''_z$.
These rows differ only in the entries of their last box, which are $i$ for $y$ and $i-1$ for $z$. 
For the rows above, according to Lemma~\ref{n2-lem}, we have $\BPath_{T\leftarrow a}\le \BPath_{T'_y\leftarrow b}$, so  
\[i-1\prec_{T''_y} i+1\prec_{T''_y} i \quand T''_z=D_i(T''_y).\]  The desired result now follows from Lemma~\ref{n-lem2}.
Next assume $\f_y(a)=1\le\f_y(c)<\f_y(b)$ and then $\f_z(c)=\f_y(a)=1$. 
Then the insertions into the first row of $T_y\leftarrow c$, $T'_y\leftarrow b$, and $T''_y$ appear as
\[
\ytabb{\cdots&a&i-1}\leftarrow c\Rightarrow\ytabb{\cdots&a&c}\leftarrow b
\Rightarrow\ytabb{\cdots&a&b}
\]
so  $\b^y_{c}(2)=i-1$ and $\b^y_{b}(2)=c$. On the other hand,
the insertions into the first row of  $T_z\leftarrow a$, $T'_z\leftarrow b$, and $T''_z$ appear as
\[
\ytabb{\cdots&c&i-1}\leftarrow a
\Rightarrow\ytabb{\cdots&a&i-1}\leftarrow b
\Rightarrow\ytabb{\cdots&a&b}
\]
so $\b^z_{a}(2)=c$ and $\b^z_{b}(2)=i-1$. 
Now, consider the insertion into the second row. Since there are no numbers greater than $i-1$ in the tableau $T$, we see that $\f_y(c)=2$. Then for the second row of $T'_y\leftarrow b$, we have 
\[
\ytabb{\cdots&i-1&i}\leftarrow c
\Rightarrow\ytabb{\cdots&c&i}
\]
and $\b^y_{b}(3)=i-1$. For the second row of $T'_z\leftarrow b$, we have
\[
\ytabb{\cdots&c&i}\leftarrow i-1
\Rightarrow\ytabb{\cdots&c&i-1}
\]
and $\b^z_{b}(3)=i$. So we have $\f_y(b)=3$ and the last two rows for $T''_y$ and $T''_z$ are 
\[
\ytabb{\cdots&\cdots&c&i\\\cdots&i-1&i+1}
\quand
\ytabb{\cdots&\cdots&c&i-1\\\cdots&i&i+1}.
\]
Since $T''_z=D_i(T''_y)$ the desired result now follows from Lemmas~\ref{n1-lem} and \ref{n-lem2}.

\item[ii.] We could have $  y=\cdots(a,i-1)(b,i+1)\cdots(i,c)\cdots$ and $z=\cdots(a,i)(b,i+1)\cdots(i-1,c)\cdots$ where $a<b<i-1<i+1<c$,
so that $y$ and $z$ have arc diagrams
\[
  y=\arcstart
{
*{a}   \arc{1}{rr}  & *{b}\arc{1.5}{rrr} & *{\bullet}   & *{\bullet}\arc{1}{rr}  & *{\bullet} & *{c}   
}
\arcstop
\quand  z=\arcstart
{
*{a}   \arc{1.5}{rrr}  & *{b}\arc{1.5}{rrr} & *{\bullet} \arc{1.5}{rrr}  & *{\bullet}  & *{\bullet} & *{c}   
}
\arcstop.
\]
In this case, denote the partial tableaux for $y$ after inserting $(a,i-1)$ and $(b,i+1)$ 
by $T_y$ and $T'_y$.
Then let $T''_y$ be the partial tableau for $y$ obtained just before inserting $(i,c)$, and
let $T'''_y$ be the partial tableau for $y$ obtained just after inserting $(i,c)$. Define $T_z$, $T'_z$, $T''_z$, and $T'''_z$ relative to $z$ similarly. 
Then write 
\[\ba
\b^y_a(k)&:=\b_{T\from a}(k),
\\
\b^y_b(k)&:=\b_{T_y\from b}(k),
\\
\b^y_c(k) &:= \b_{T''_y\from c}(k).
\ea\] Define $\b^z_a(k)$, $\b^z_b(k)$ and $\b^z_c(k)$ similarly. Also, write 
\[ \f_y(a):= \f_{T\from a}, 
\quad \f_y(b):=\f_{T_y\from b},
\quand
\f_y(c):=\f_{T''_y\from c}.\] 
Define $\f_z(a)$, $\f_z(b)$, and $\f_z(c)$ relative to $z$ similarly.
Then  $T'_z$ is just $T'_y$ after replacing $i-1$ by $i$. Also, by considering the result of $\fromCBS(a,i-1)$ and $\fromCBS(b,i+1)$ for $y$, 
by Lemma~\ref{n2-lem}, we have $\b^y_a(j)<\b^y_b(j)$ for all indices $j \leq \min \{\f_y(a), \f_y(b)\}$. 
Assume $\f_y(a)=l$. Then either
\be\label{ccass1} \f_y(b)=k<l\quand i-1\prec_{T'_y} \b^y_b(\f_y(b))\prec_{T'_y} i+1,\ee
or we have 
 \be\label{ccass2}\f_y(b)\ge l \quad\text{and therefore}\quad \b^y_a(l)<\b^y_b(l).\ee
When \eqref{ccass2} occurs, in the tableau $T'_y$, we either have (I) $\b^y_a(l)$ and $i-1$ are the last two elements in the same row, or (II) $i-1$ is in some row $l'<l$ and both $\b^y_a(l)$ and $i-1$ are the last element in their rows. In subcase (I), we must have $\b^y_b(l)< i-1$, as otherwise $\b^y_a(l)$, $i-1$, $\b^y_b(l)$ and $i+1$ will be in the same row after two insertions, which leads to $\b^y_b(l)=i$ which is impossible. Since there are no elements greater than $i-1$ before inserting $(a,i-1)$, we have 
\[ \b^y_b(\f_y(b))=\b^y_{b}(l+1)=i-1\quand i-1\prec_{T'_y} i+1.\]
In subcase (II), we must have $\f_y(b)=l$ because 
 $\b^y_a(l)$ is the last element in its row in $T'_y$ and $\b^y_a(l)<\b^y_b(l)$.
Since $i-1$ is in the column directly after the one containing $\b^y_a(l)$ in $T'_y$, it must be in the same column in $T''_y$ because $\b^y_a(j)<\b^y_b(j)$ for all $j\leq l$. Also, $\b^y_b(l)$ is in the column directly after the one containing $\b^y_a(l)$ in $T''_y$, so 
\[\b^y_b(\f_y(b))=\b^y_{b}(l+1)=i-1\quand i-1\prec_{T'_y} i+1.
\]
Thus, for both cases \eqref{ccass1} and \eqref{ccass2}, Lemma~\ref{n1-lem} implies that $i-1\prec_{T''_y} i+1$. 
By the above discussion, we see that $i-1$ and $i+1$ appear in different rows of $T''_y$. Thus after the insertion $\fromCBS(i,c)$, 
we have $i-1\prec_{T'''_y} i+1\prec_{T'''_y} i$. For $z$, after inserting $(i-1,c)$ we similarly 
 have \[i\prec_{T'''_z} i+1\prec_{T'''_z} i-1 \quand T'''_z=D_i(T'''_y).\] The desired result now follows from Lemmas~\ref{n1-lem} and \ref{n-lem2}.

\item[iii.] We could have $y=\cdots(a,i-1)\cdots(i+1,b)\cdots(i,c)\cdots$ and $z=\cdots(a,i)\cdots(i+1,b)\cdots(i-1,c)\cdots$ where $a<i-1<i+1<b<c$,
so that 
\[
  y=\arcstart
{
*{a}   \arc{0.5}{r}  & *{\bullet} & *{\bullet}  \arc{1.5}{rrr}     & *{\bullet}\arc{0.5}{r}  & *{b} & *{c}   
}
\arcstop
\quand   z=\arcstart
{
*{a}   \arc{1}{rr}  & *{\bullet}\arc{2}{rrrr}      & *{\bullet}  & *{\bullet}\arc{0.5}{r}  & *{b} & *{c}    
}
\arcstop.
\]
Write $T_y$ for the partial tableau for $y$ obtained after inserting $(a,i-1)$.
Let $T'_y$ be the partial tableau for $y$ obtained after inserting $(i+1,b)$, 
let $T''_y$ be the partial tableau for $y$ obtained before inserting $(i,c)$,
 and 
 let $T'''_y$ be the partial tableau for $y$ obtained after inserting $(i,c)$. 
 Define $T_z$, $T'_z$, $T''_z$, $T'''_z$ relative to $z$ similarly. Then $T'_z$ is just $T'_y$ after replacing $i-1$ by $i$.
Also, the insertion $(i+1,b)$ makes $i-1\prec_{T'_y} i+1$. Thus, by Lemma~\ref{n1-lem}, 
we have $i-1\prec_{T''_y} i+1$. Using the same argument as in case ii, 
after the insertion $\fromCBS(i,c)$, we get $i-1\prec_{T'''_y} i+1\prec_{T'''_y} i$. 
For $z$, similarly, we get \[i\prec_{T'''_z} i+1\prec_{T'''_z} i-1 \quand T'''_z=D_i(T'''_y)\] 
so the desired result follows from Lemmas~\ref{n1-lem} and \ref{n-lem2}.

\item[iv.] Finally, we could have $y=\cdots(i-1,a)\cdots(i+1,b)\cdots(i,c)\cdots$ and $  z=\cdots(i,a)\cdots(i+1,b)\cdots(i-1,c)\cdots$ where $i+1<a<b<c$,
so that 
\[
  y=\arcstart
{
*{\bullet}   \arc{1.5}{rrr}  & *{\bullet} \arc{2}{rrrr} & *{\bullet}  \arc{1}{rr}     & *{a} & *{b} & *{c}   
}
\arcstop
\quand  z=\arcstart
{
*{\bullet}   \arc{2.5}{rrrrr}  & *{\bullet} \arc{1}{rr} & *{\bullet}  \arc{1}{rr}     & *{a} & *{b} & *{c}   
}
\arcstop.
\]
In this case, the proof is the same as case iii after replacing $(a,i-1)$ by $(i-1,a)$.
\end{itemize}

\item[(1c)] Assume $y(i-1)=i+1$ and $y(i)>i+1$, so that $a=i-1 < b=i+1<c=y(i)$. Then
$y=\cdots(i-1,i+1)\cdots(i,c)\cdots$ and $  z=\cdots(i,i+1)\cdots(i-1,c)\cdots$ have arc diagrams
\[
  y=\arcstart
{
*{\bullet}\arc{1}{rr}  & *{\bullet}\arc{1}{rr} & *{\bullet} & *{c}  
}
\arcstop
\quand  z=\arcstart
{
*{\bullet}\arc{1.5}{rrr}  & *{\bullet}\arc{0.5}{r} & *{\bullet} & *{c}  
}
\arcstop.
\]
In this case, let $T_y$ denote the partial tableau for $y$ obtained after inserting $(i-1,i+1)$, 
let $T'_y$ be the partial tableau for $y$ obtained before inserting $(i,c)$, 
and let $T''_y$ be the partial tableau for $y$ obtained after inserting $(i,c)$. 
Define $T_z$, $T'_z$, and $T''_z$ relative to $z$ similarly. Then $T_z$ is just $T_y$ after replacing $i-1$ by $i$ and $i-1\prec_{T_y} i+1$. 
Thus, by Lemma~\ref{n1-lem}, we have $i-1\prec_{T'_y} i+1$. 
Using the same argument as in case 1b(ii), we get $i-1\prec_{T''_y} i+1\prec_{T''_y} i$. 
For $z$, similarly, we get $i\prec_{T''_z} i+1\prec_{T''_z} i-1$ and $ T''_z=D_i(T''_y),$
so the desired result follows from Lemmas~\ref{n1-lem} and \ref{n-lem2}.
\end{itemize}
In all of these cases we deduce that $\PCBS(y) = D_i(\PCBS(z))$ as needed.
\end{proof}

Our next result is the final piece of the proof of Theorem~\ref{n-cb-thm}.

\begin{proposition}\label{prop3}
Assume $\ymark(i-1)$ is between $\ymark(i)$ and $\ymark(i+1)$.  Then $\PCBS(y)=D_i(\PCBS(s_{i} ys_{i}))$.
\end{proposition}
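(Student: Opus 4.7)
Set $z := s_i y s_i$; the plan is to establish $\PCBS(y)=D_i(\PCBS(z))$ by mirroring the strategy of Proposition~\ref{prop2}, with the window endpoint being conjugated shifted from $i-1$ to $i+1$. First I would use symmetry to halve the case analysis. The hypothesis splits as $\ymark(i)<\ymark(i-1)<\ymark(i+1)$ or $\ymark(i+1)<\ymark(i-1)<\ymark(i)$. A direct inspection shows that passing from $y$ to $z=s_iys_i$ carries one sub-range onto the other, just as the map $y\leftrightarrow z$ exchanges the cases (1a)--(1c) and (2a)--(2c) in the proof of Proposition~\ref{prop2}. Since $D_i$ is an involution, it therefore suffices to treat only the case $\ymark(i)<\ymark(i-1)<\ymark(i+1)$.

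Next I would enumerate the arc-diagram configurations of $y$ on $\{i-1,i,i+1\}$ compatible with $\ymark(i)<\ymark(i-1)<\ymark(i+1)$. A short check shows that the possibilities reduce to four families analogous to cases (1a)--(1c) of Proposition~\ref{prop2}: (a) $i$ is a fixed point of $y$ and $(i-1,i+1)$ is a cycle of $y$; (b) $i$ is a fixed point of $y$, both $y(i-1)$ and $y(i+1)$ lie outside $\{i-1,i,i+1\}$, and $y(i-1)<y(i+1)$; (c) $(i-1,i+1)$ is a cycle of $y$ and $y(i)$ is an external value with $y(i)<i-1$; and (d) the three values $y(i-1), y(i), y(i+1)$ all lie outside $\{i-1,i,i+1\}$ and satisfy $y(i)<y(i-1)<y(i+1)$. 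In each case $z$ is obtained from $y$ by the appropriate swap of the cycles touching positions $i$ and $i+1$.

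For each configuration I would introduce intermediate partial tableaux $T, T_y, T'_y, T''_y$ and $T_z, T'_z, T''_z$ recording the successive outputs of $\fromCBS$ on the cycles of $y$ and $z$ that touch $\{i-1,i,i+1\}$, in the order dictated by the definition of $\PCBS$, and compare them using Lemma~\ref{n2-lem}. The goal in each subcase is to verify that once $i-1$, $i$, and $i+1$ have all been inserted one has $T''_z=D_i(T''_y)$, together with the betweenness condition on the row reading word required by the definition of $D_i$. Lemma~\ref{n-lem2} then propagates the relation $D_i(\cdots)$ through the remaining insertions, and Lemma~\ref{n1-lem} transports this equality to the final tableaux, yielding $\PCBS(y)=D_i(\PCBS(z))$.

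The main obstacle will be the analogue of case (1b)(i) of Proposition~\ref{prop2}, namely case (d) above when the three external values $y(i-1), y(i), y(i+1)$ all lie to the left of $i-1$. There the three bumping paths generated by the insertions of the pairs $(y(i-1), i-1)$, $(y(i), i)$, and $(y(i+1), i+1)$ (together with those of the corresponding cycles of $z$) can interleave in delicate ways, and an explicit row-by-row comparison of the values $\b^y_\bullet(k)$ and $\b^z_\bullet(k)$ will be required, with subcases distinguished by which of $\f_y(a), \f_y(b), \f_y(c)$ is smallest. The remaining cases (a)--(c) should be routine adaptations of the arguments for (1a), (1c), and the simpler branches of (1b) in Proposition~\ref{prop2}.
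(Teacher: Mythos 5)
Your strategy---set $z:=s_iys_i$, use the involution property of $D_i$ to reduce to the sub-range $\ymark(i)<\ymark(i-1)<\ymark(i+1)$, enumerate arc-diagram configurations of $y$ on $\{i-1,i,i+1\}$, and verify each case with Lemmas~\ref{n1-lem}, \ref{n2-lem}, and \ref{n-lem2}---is exactly the paper's approach. However, your enumeration is incomplete. The inequalities $\ymark(i)<\ymark(i-1)<\ymark(i+1)$ give rise to \emph{five} families, not four: your cases (a), (b), (c), (d) correspond to the paper's (3b), (3c), (3e), (3d), but you omit the configuration in which \emph{both} $i-1$ and $i$ are fixed points of $y$ while $y(i+1)$ is external (the paper's case (3a)). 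Indeed, if $y(i-1)=i-1$ and $y(i)=i$ then $\ymark(i)=-i<\ymark(i-1)=-(i-1)$, and any external $y(i+1)$ gives $\ymark(i+1)=y(i+1)>0>\ymark(i-1)$, so the hypothesis holds. This case is not covered by your (a)--(d): in (a) you have a cycle $(i-1,i+1)$, in (b) you require $y(i-1)$ external, in (c) you require a cycle $(i-1,i+1)$, and in (d) you require all of $y(i-1),y(i),y(i+1)$ external. Under conjugation by $s_i$ this missing case maps to the configuration where $i-1$ and $i+1$ are fixed and $y(i)$ is external, which is a genuinely different shape of partial tableau and must be argued separately (it is in fact one of the easier cases, but it does need to appear). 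Aside from this omission, your proposal is a high-level plan rather than a proof: the row-by-row comparisons of bumping values $\b^y_\bullet$ and $\b^z_\bullet$ that you correctly identify as the crux of case (d) are deferred, and these subdivide further (the paper's case (3d) alone has four subcases). To turn this into a proof you would need to fill in those details and add the fifth case.
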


\begin{figure}[h]
\begin{center}
\[
\barr{c}
\text{Cases 3a $\leftrightarrow$ 4a:}
\\
\tikzset{%
  dot/.style={circle, draw, fill=black, inner sep=0pt, minimum width=4pt},
}
\barr{|ccc|}
\hline
\begin{tikzpicture}[baseline=(z.base)]
\node (z) at (0,0.000) {};
\node at (0,0.400) {};
\node[dot,label=below:$a$] (a) at (0,0) {};
\node[dot,label=below:$i-1$] (i-1) at (1,0) {};
\node[dot,label=below:$i$] (i) at (1.5,0) {};
\node[dot,label=below:$i+1$] (i+1) at (2,0) {};
\draw[loosely dotted, line width =1pt](a)--(i-1);
\draw (a) to [bend left] (i+1);
\end{tikzpicture}
& \longleftrightarrow &
\begin{tikzpicture}[baseline=(z.base)]
\node (z) at (7,0.000) {};
\node at (7,0.400) {};
\node[dot,label=below:$a$] (a') at (6,0) {};
\node[dot,label=below:$i-1$] (i-1') at (7,0) {};
\node[dot,label=below:$i$] (i') at (7.5,0) {};
\node[dot,label=below:$i+1$] (i+1') at (8,0) {};
\draw[loosely dotted, line width =1pt](a')--(i-1');
\draw (a') to [bend left] (i');
\end{tikzpicture}
\\ \hline
\begin{tikzpicture}[baseline=(z.base)]
\node (z) at (0,0.000) {};
\node at (0,0.400) {};
\node[dot,label=below:$i-1$] (i-1) at (0,0) {};
\node[dot,label=below:$i$] (i) at (0.5,0) {};
\node[dot,label=below:$i+1$] (i+1) at (1,0) {};
\node[dot,label=below:$c$] (c) at (2,0) {};
\draw[loosely dotted, line width =1pt](i+1)--(c);
\draw (c) to [bend right] (i+1);
\end{tikzpicture}
& \longleftrightarrow &
\begin{tikzpicture}[baseline=(z.base)]
\node (z) at (7,0.000) {};
\node at (7,0.400) {};
\node[dot,label=below:$i-1$] (i-1') at (6,0) {};
\node[dot,label=below:$i$] (i') at (6.5,0) {};
\node[dot,label=below:$i+1$] (i+1') at (7,0) {};
\node[dot,label=below:$c$] (c') at (8,0) {};
\draw[loosely dotted, line width =1pt](i+1')--(c');
\draw (c') to [bend right] (i');
\end{tikzpicture}
\\\hline
\earr
\\
\\
\text{Cases 3b $\leftrightarrow$ 4b:}
\\
\tikzset{%
  dot/.style={circle, draw, fill=black, inner sep=0pt, minimum width=4pt},
}
\barr{|ccc|}
\hline
\begin{tikzpicture}[baseline=(z.base)]
\node (z) at (0,0.000) {};
\node at (0,0.400) {};
\node[dot,label=below:$i-1$] (i-1) at (0,0) {};
\node[dot,label=below:$i$] (i) at (0.5,0) {};
\node[dot,label=below:$i+1$] (i+1) at (1,0) {};
\draw (i-1) to [bend left] (i+1);
\end{tikzpicture}
& \longleftrightarrow &
\begin{tikzpicture}[baseline=(z.base)]
\node (z) at (5,0.000) {};
\node at (5,0.400) {};
\node[dot,label=below:$i-1$] (i-1') at (5,0) {};
\node[dot,label=below:$i$] (i') at (5.5,0) {};
\node[dot,label=below:$i+1$] (i+1') at (6,0) {};
\draw (i-1') to [bend left] (i');
\end{tikzpicture}
\\ \hline
\earr
\\
\\
\text{Cases 3c $\leftrightarrow$ 4c:}
\\
\tikzset{%
  dot/.style={circle, draw, fill=black, inner sep=0pt, minimum width=4pt},
}
\barr{|ccc|}
\hline
\begin{tikzpicture}[baseline=(z.base)]
\node (z) at (0,0.000) {};
\node at (0,0.400) {};
\node[dot,label=below:$a$] (a) at (0,0) {};
\node[dot,label=below:$b$] (b) at (1,0) {};
\node[dot,label=below:$i-1$] (i-1) at (2,0) {};
\node[dot,label=below:$i$] (i) at (2.5,0) {};
\node[dot,label=below:$i+1$] (i+1) at (3,0) {};
\draw[loosely dotted, line width =1pt](a)--(i-1);
\draw (a) to [bend left] (i-1);
\draw (b) to [bend left] (i+1);
\end{tikzpicture}
& \longleftrightarrow &
\begin{tikzpicture}[baseline=(z.base)]
\node (z) at (7,0.000) {};
\node at (7,0.400) {};
\node[dot,label=below:$a$] (a') at (7,0) {};
\node[dot,label=below:$b$] (b') at (8,0) {};
\node[dot,label=below:$i-1$] (i-1') at (9,0) {};
\node[dot,label=below:$i$] (i') at (9.5,0) {};
\node[dot,label=below:$i+1$] (i+1') at (10,0) {};
\draw[loosely dotted, line width =1pt](a')--(i-1');
\draw (a') to [bend left] (i-1');
\draw (b') to [bend left] (i');
\end{tikzpicture}
\\ \hline
\begin{tikzpicture}[baseline=(z.base)]
\node (z) at (0,0.000) {};
\node at (0,0.400) {};
\node[dot,label=below:$a$] (a) at (0,0) {};
\node[dot,label=below:$i-1$] (i-1) at (1,0) {};
\node[dot,label=below:$i$] (i) at (1.5,0) {};
\node[dot,label=below:$i+1$] (i+1) at (2,0) {};
\node[dot,label=below:$c$] (c) at (3,0) {};
\draw[loosely dotted, line width =1pt](a)--(i-1);
\draw[loosely dotted, line width =1pt](i+1)--(c);
\draw (a) to [bend left] (i-1);
\draw (c) to [bend right] (i+1);
\end{tikzpicture}
& \longleftrightarrow &
\begin{tikzpicture}[baseline=(z.base)]
\node (z) at (7,0.000) {};
\node at (7,0.400) {};
\node[dot,label=below:$a$] (a') at (7,0) {};
\node[dot,label=below:$i-1$] (i-1') at (8,0) {};
\node[dot,label=below:$i$] (i') at (8.5,0) {};
\node[dot,label=below:$i+1$] (i+1') at (9,0) {};
\node[dot,label=below:$c$] (c') at (10,0) {};
\draw[loosely dotted, line width =1pt](a')--(i-1');
\draw[loosely dotted, line width =1pt](i+1')--(c');
\draw (a') to [bend left] (i-1');
\draw (c') to [bend right] (i');
\end{tikzpicture}
\\ \hline
\begin{tikzpicture}[baseline=(z.base)]
\node (z) at (0,0.000) {};
\node at (0,0.400) {};
\node[dot,label=below:$i-1$] (i-1) at (0,0) {};
\node[dot,label=below:$i$] (i) at (0.5,0) {};
\node[dot,label=below:$i+1$] (i+1) at (1,0) {};
\node[dot,label=below:$b$] (b) at (2,0) {};
\node[dot,label=below:$c$] (c) at (3,0) {};
\draw[loosely dotted, line width =1pt](i+1)--(c);
\draw (b) to [bend right] (i-1);
\draw (c) to [bend right] (i+1);
\end{tikzpicture}
& \longleftrightarrow &
\begin{tikzpicture}[baseline=(z.base)]
\node (z) at (7,0.000) {};
\node at (7,0.400) {};
\node[dot,label=below:$i-1$] (i-1') at (7,0) {};
\node[dot,label=below:$i$] (i') at (7.5,0) {};
\node[dot,label=below:$i+1$] (i+1') at (8,0) {};
\node[dot,label=below:$b$] (b') at (9,0) {};
\node[dot,label=below:$c$] (c') at (10,0) {};
\draw[loosely dotted, line width =1pt](i+1')--(c');
\draw (b') to [bend right] (i-1');
\draw (c') to [bend right] (i');
\end{tikzpicture}
\\ \hline
\earr
\\
\\
\text{Cases 3d $\leftrightarrow$ 4d:}
\\
\tikzset{%
  dot/.style={circle, draw, fill=black, inner sep=0pt, minimum width=4pt},
}
\barr{|ccc|}
\hline
\begin{tikzpicture}[baseline=(z.base)]
\node (z) at (0,0.000) {};
\node at (0,0.400) {};
\node[dot,label=below:$a$] (a) at (0,0) {};
\node[dot,label=below:$b$] (b) at (1,0) {};
\node[dot,label=below:$c$] (c) at (2,0) {};
\node[dot,label=below:$i-1$] (i-1) at (3,0) {};
\node[dot,label=below:$i$] (i) at (3.5,0) {};
\node[dot,label=below:$i+1$] (i+1) at (4,0) {};
\draw[loosely dotted, line width =1pt](a)--(i-1);
\draw (0,0) to [bend left] (3.5,0);
\draw (1,0) to [bend left] (3,0);
\draw (2,0) to [bend left] (4,0);
\end{tikzpicture}
& \longleftrightarrow &
\begin{tikzpicture}[baseline=(z.base)]
\node (z) at (8,0.000) {};
\node at (8,0.400) {};
\node[dot,label=below:$a$] (a') at (8,0) {};
\node[dot,label=below:$b$] (b') at (9,0) {};
\node[dot,label=below:$c$] (c') at (10,0) {};
\node[dot,label=below:$i-1$] (i-1') at (11,0) {};
\node[dot,label=below:$i$] (i') at (11.5,0) {};
\node[dot,label=below:$i+1$] (i+1') at (12,0) {};
\draw[loosely dotted, line width =1pt](a')--(i-1');
\draw (8,0) to [bend left] (12,0);
\draw (9,0) to [bend left] (11,0);
\draw (10,0) to [bend left] (11.5,0);
\end{tikzpicture}
\\ \hline
\begin{tikzpicture}[baseline=(z.base)]
\node (z) at (0,0.000) {};
\node at (0,0.400) {};
\node[dot,label=below:$a$] (a) at (0,0) {};
\node[dot,label=below:$b$] (b) at (1,0) {};
\node[dot,label=below:$i-1$] (i-1) at (2,0) {};
\node[dot,label=below:$i$] (i) at (2.5,0) {};
\node[dot,label=below:$i+1$] (i+1) at (3,0) {};
\node[dot,label=below:$c$] (c) at (4,0) {};
\draw[loosely dotted, line width =1pt](a)--(i-1);
\draw[loosely dotted, line width =1pt](i+1)--(c);
\draw (a) to [bend left] (i);
\draw (b) to [bend left] (i-1);
\draw (c) to [bend right] (i+1);
\end{tikzpicture}
& \longleftrightarrow &
\begin{tikzpicture}[baseline=(z.base)]
\node (z) at (8,0.000) {};
\node at (8,0.400) {};
\node[dot,label=below:$a$] (a') at (8,0) {};
\node[dot,label=below:$b$] (b') at (9,0) {};
\node[dot,label=below:$i-1$] (i-1') at (10,0) {};
\node[dot,label=below:$i$] (i') at (10.5,0) {};
\node[dot,label=below:$i+1$] (i+1') at (11,0) {};
\node[dot,label=below:$c$] (c') at (12,0) {};
\draw[loosely dotted, line width =1pt](a')--(i-1');
\draw[loosely dotted, line width =1pt](i+1')--(c');
\draw (a') to [bend left] (i+1');
\draw (b') to [bend left] (i-1');
\draw (c') to [bend right] (i');
\end{tikzpicture}
\\ \hline
\begin{tikzpicture}[baseline=(z.base)]
\node (z) at (0,0.000) {};
\node at (0,0.400) {};
\node[dot,label=below:$a$] (a) at (0,0) {};
\node[dot,label=below:$i-1$] (i-1) at (1,0) {};
\node[dot,label=below:$i$] (i) at (1.5,0) {};
\node[dot,label=below:$i+1$] (i+1) at (2,0) {};
\node[dot,label=below:$b$] (b) at (3,0) {};
\node[dot,label=below:$c$] (c) at (4,0) {};
\draw[loosely dotted, line width =1pt](a)--(i-1);
\draw[loosely dotted, line width =1pt](i+1)--(c);
\draw (a) to [bend left] (i);
\draw (b) to [bend right] (i-1);
\draw (c) to [bend right] (i+1);
\end{tikzpicture}
& \longleftrightarrow &
\begin{tikzpicture}[baseline=(z.base)]
\node (z) at (8,0.000) {};
\node at (8,0.400) {};
\node[dot,label=below:$a$] (a') at (8,0) {};
\node[dot,label=below:$i-1$] (i-1') at (9,0) {};
\node[dot,label=below:$i$] (i') at (9.5,0) {};
\node[dot,label=below:$i+1$] (i+1') at (10,0) {};
\node[dot,label=below:$b$] (b') at (11,0) {};
\node[dot,label=below:$c$] (c') at (12,0) {};
\draw[loosely dotted, line width =1pt](a')--(i-1');
\draw[loosely dotted, line width =1pt](i+1')--(c');
\draw (a') to [bend left] (i+1');
\draw (b') to [bend right] (i-1');
\draw (c') to [bend right] (i');
\end{tikzpicture}
\\ \hline
\begin{tikzpicture}[baseline=(z.base)]
\node (z) at (0,0.000) {};
\node at (0,0.400) {};
\node[dot,label=below:$i-1$] (i-1) at (0,0) {};
\node[dot,label=below:$i$] (i) at (0.5,0) {};
\node[dot,label=below:$i+1$] (i+1) at (1,0) {};
\node[dot,label=below:$a$] (a) at (2,0) {};
\node[dot,label=below:$b$] (b) at (3,0) {};
\node[dot,label=below:$c$] (c) at (4,0) {};
\draw[loosely dotted, line width =1pt](i+1)--(c);
\draw (a) to [bend right] (i);
\draw (b) to [bend right] (i-1);
\draw (c) to [bend right] (i+1);
\end{tikzpicture}
& \longleftrightarrow &
\begin{tikzpicture}[baseline=(z.base)]
\node (z) at (8,0.000) {};
\node at (8,0.400) {};
\node[dot,label=below:$i-1$] (i-1') at (8,0) {};
\node[dot,label=below:$i$] (i') at (8.5,0) {};
\node[dot,label=below:$i+1$] (i+1') at (9,0) {};
\node[dot,label=below:$a$] (a') at (10,0) {};
\node[dot,label=below:$b$] (b') at (11,0) {};
\node[dot,label=below:$c$] (c') at (12,0) {};
\draw[loosely dotted, line width =1pt](i+1')--(c');
\draw (a') to [bend right] (i+1');
\draw (b') to [bend right] (i-1');
\draw (c') to [bend right] (i');
\end{tikzpicture}
\\ \hline
\earr
\\
\\
\text{Cases 3e $\leftrightarrow$ 4e:}
\\
\tikzset{%
  dot/.style={circle, draw, fill=black, inner sep=0pt, minimum width=4pt},
}
\barr{|ccc|}
\hline
\begin{tikzpicture}[baseline=(z.base)]
\node (z) at (0,0.000) {};
\node at (0,0.400) {};
\node[dot,label=below:$a$] (a) at (0,0) {};
\node[dot,label=below:$i-1$] (i-1) at (1,0) {};
\node[dot,label=below:$i$] (i) at (1.5,0) {};
\node[dot,label=below:$i+1$] (i+1) at (2,0) {};
\draw[loosely dotted, line width =1pt](a)--(i-1);
\draw (a) to [bend left] (i);
\draw (i-1) to [bend left] (i+1);
\end{tikzpicture}
& \longleftrightarrow &
\begin{tikzpicture}[baseline=(z.base)]
\node (z) at (7,0.000) {};
\node at (7,0.400) {};
\node[dot,label=below:$a$] (a') at (6,0) {};
\node[dot,label=below:$i-1$] (i-1') at (7,0) {};
\node[dot,label=below:$i$] (i') at (7.5,0) {};
\node[dot,label=below:$i+1$] (i+1') at (8,0) {};
\draw[loosely dotted, line width =1pt](a')--(i-1');
\draw (a') to [bend left] (i+1');
\draw (i-1') to [bend left] (i');
\end{tikzpicture}
\\ \hline
\earr
\earr
\]
\end{center}
\caption{Possibilities for $y$ when  $\ymark(i-1)$ is between $\ymark(i)$ and $\ymark(i+1)$.}\label{cases34-fig}
\end{figure}

\begin{proof}
 In this proof let $z := s_{i} ys_{i}$.
We wish to show that $\PCBS(y) = D_i(\PCBS(z))$.
By hypothesis either
$\ymark(i)<\ymark(i-1)<\ymark(i+1)$ or $ \ymark(i+1)<\ymark(i-1)<\ymark(i).$
If $\ymark(i)<\ymark(i-1)<\ymark(i+1)$ then we must be in one of the following cases:
\begin{itemize}
\item[(3a)] $i-1$ and $i$ are fixed points of $y$ and $\ymark(i+1)=y(i+1)\neq i+1$. 

\item[(3b)] $i$ is a fixed point of $y$ and $y(i-1)=i+1$. 

\item[(3c)] $i$ is the only fixed point of $y$ in $\{i-1,i,i+1\}$ and $\ymark(i-1)=y(i-1)<y(i+1)=\ymark(i+1)$. 

\item[(3d)] 
$y(i)<y(i-1)<y(i+1)$ and $\{i-1,i,i+1\}\cap\{y(i-1),y(i),y(i+1)\}=\varnothing$. 

\item[(3e)] 
$y(i-1)=i+1$ and  $y(i)<i-1$. 
\end{itemize}
If instead $\ymark(i+1)<\ymark(i-1)<\ymark(i)$ then we must be in one of these cases:
\begin{itemize}
\item[(4a)] $i-1$ and $i+1$ are fixed points of $y$ and $\ymark(i)=y(i)\neq i$.

\item[(4b)] $i+1$ is a fixed point of $y$ and $y(i-1)=i$.

\item[(4c)] $i+1$ is the only fixed point of $y$ in $\{i-1,i,i+1\}$ and $\ymark(i-1)=y(i-1)<y(i)=\ymark(i)$.

\item[(4d)] 
$y(i+1)<y(i-1)<y(i)$ and $\{i-1,i,i+1\}\cap\{y(i-1),y(i),y(i+1)\}=\varnothing$.

\item[(4e)] 
$y(i-1)=i$ and $y(i+1)<i-1$.
\end{itemize}
Figure~\ref{cases34-fig}  shows the possibilities for the arc diagrams of $y$ in these cases.

Cases (3a)-(3e) are obtained from cases (4a)-(4e) by interchanging $y$ and $z$. 
Thus, it suffices to show that $\PCBS(y) = D_i(\PCBS(z))$ just in cases (3a)-(3e).
We consider each of these it turn.
As in the proof of the previous proposition,
throughout this argument we define
  $a<b<c$ to be the integers with $\{a,b,c\} = \{ y(i-1),y(i),y(i+1)\}$.
\begin{itemize}
\item[(3a)] Assume $i-1$ and $i$ are fixed points of $y$ and $\ymark(i+1)=y(i+1)\notin\{i-1,i, i+1\}$.
Then the first partial tableau for $y$ containing all three of $i-1$, $i$, and $i+1$ will 
have $i-1$ and $i$ in the first column, with $i-1$ in a row above $i$. This tableau will also have $i+1$ either in the first row (when $i+1 < y(i+1)$)
or in column two or greater (when $y(i+1) < i+1$). Either way, it follows from Lemma~\ref{n1-lem} that
\[i\prec_{\PCBS(y)}i-1\prec_{\PCBS(y)}i+1.\] 
The first partial tableau for $z$ containing all three of $i-1$, $i$, and $i+1$ is obtained from the first partial tableau for $y$ containing all three of $i-1$, $i$, and $i+1$ by interchanging $i$ and $i+1$. So we have $\PCBS(z)=s_i(\PCBS(y))=D_i(\PCBS(y))$ by Lemma~\ref{n-lem2}.

\item[(3b)] Assume $i$ is a fixed point of $y$ and $y(i-1)=i+1$, so that $a=i-1<b=i<c=i+1$.
In this case, $y=\cdots(i,i)(i-1,i+1)\cdots$ and $z=\cdots(i-1,i)(i+1,i+1)\cdots$. 
Let $T_y$ and $T'_y$ denote the partial tableaux for $y$ obtained after inserting $(i,i)$ and $(i-1,i+1)$. 
Define $T_z$ and $T'_z$ relative to $z$ similarly. 
If $i=2$, we have $T'_y=\ytab{1&3\\2}$ and $T'_z=\ytab{1&2\\3}$. Hence
\[ i\prec_{T'_y} i-1\prec_{T'_y} i+1\quand T'_z=s_i(T'_y)=D_i(T'_y).\]
If $i>2$, then $i$ is not in row 1 of $T_y$ but $i-1$ and $i+1$ are in row 1 of $T'_y$. 
Therefore the two insertions $T_y\fromCBS(i,i)$ and $T'_y\fromCBS(i-1,i+1)$ will not interact, and nor will
$T_z\fromCBS(i-1,i)$ and $T'_z\fromCBS(i+1,i+1)$. Thus, we have $i\prec_{T'_y} i-1\prec_{T'_y} i+1$ and 
$T'_z=s_i(T'_y)=D_i(T'_y)$, so by Lemmas~\ref{n1-lem} and \ref{n-lem2}
\[i\prec_{\PCBS(y)}i-1\prec_{\PCBS(y)}i+1\quand \PCBS(z)=D_i(\PCBS(y)).\]

\item[(3c)] Assume $i$ is the only fixed point of $y$ in $\{i-1,i,i+1\}$ and $\ymark(i-1)=y(i-1)<y(i+1)=\ymark(i+1)$. 
Then we are in one of the three possible subcases indicated in Figure~\ref{cases34-fig}:
\begin{itemize}
\item[i.] We could have $y=\cdots(a,i-1)(i,i)(b,i+1)\cdots$ and $z=\cdots(a,i-1)(b,i)(i+1,i+1)\cdots$ where $a<b<i-1<i=c$. 
In this case, denote the partial tableaux for $y$ after inserting $(a,i-1)$, $(i,i)$, and $(b,i+1)$
by $T$,  $T_y$, and $T'_y$. Note that $T$ is also the partial tableau for $z$ obtained just before inserting $(a,i-1)$. Define $T_z$ and $T'_z$ relative to $z$ similarly.
By an argument similar to case 1a(i), we get $T'_z=s_i(T'_y)=D_i(T'_y)$, so
 Lemma~\ref{n2-lem} implies
 $i\prec_{T''_y} i-1\prec_{T''_y} i+1$. 
According to Lemma~\ref{n-lem2}, we then have 
\[i\prec_{\PCBS(y)} i-1\prec_{\PCBS(y)} i+1\quand \PCBS(z)=D_i(\PCBS(y)).\]

\item[ii.] We could have 
\[y=\cdots(a,i-1)(i,i)\cdots(i+1,c)\cdots\quand z=\cdots(a,i-1)(i+1,i+1)\cdots(i,c)\cdots\] where $a<i-1<b=i<i+1<c$.
In this case, let $T$ denote the partial tableau for $y$ obtained after inserting $(a,i-1)$.
Then let $T_y$ be the partial tableau obtained after inserting $(i,i)$, 
let $T'_y$ be the partial tableau obtained before inserting $(i+1,c)$, 
and let $T''_y$  be the partial tableau obtained after inserting $(i+1,c)$. 
Define $T_z$, $T'_z$, and $T''_z$ relative to $z$ similarly.
By an argument similar to case 1a(ii), we deduce that
\[i\prec_{T''_y} i-1\prec_{T''_y} i+1
\quand
T''_z=s_i(T''_y)=D_i(T''_y).\] 
Then by Lemmas~\ref{n1-lem} and \ref{n-lem2} we have
 \[i\prec_{\PCBS(y)} i-1\prec_{\PCBS(y)} i+1\quand \PCBS(z)=D_i(\PCBS(y)).\]

\item[iii.] We could have $y=\cdots(i-1,b)\cdots(i,i)\cdots(i+1,c)\cdots$ and $z=\cdots(i-1,b)\cdots(i+1,i+1)\cdots(i,c)\cdots$ where $a=i<i+1<b<c$.
In this case, the proof is the same as case ii after replacing $(a,i-1)$ by $(i-1,b)$.
\end{itemize}

\item[(3d)] Assume $y(i)=a<y(i-1)=b<y(i+1)=c$ and $\{i-1,i,i+1\}\cap\{a,b,c\}=\varnothing$.
Then we are in one of the four possible subcases indicated in Figure~\ref{cases34-fig}:
\begin{itemize}
\item[i.] We could have $  y=\cdots(b,i-1)(a,i)(c,i+1)\cdots$ and $  z=\cdots(b,i-1)(c,i)(a,i+1)\cdots$ where $a<b<c<i-1$,
so that the arc diagrams of $y$ and $z$ are
\[
  y=\arcstart
{
*{a}   \arc{2}{rrrr}  & *{b} \arc{1}{rr} & *{c}  \arc{1.5}{rrr}     & *{\bullet} & *{\bullet} & *{\bullet}   
}
\arcstop
\quand   z=\arcstart
{
*{a}   \arc{2}{rrrrr}  & *{b} \arc{1}{rr} & *{c}  \arc{1}{rr}     & *{\bullet} & *{\bullet} & *{\bullet}     
}
\arcstop.
\]
We again consider two sequences of successive row insertions.
Denote the partial tableau 
for $y$ obtained 
just before inserting $(b,i-1)$ by $T$. This is also the partial tableau for $z$ obtained just before inserting $(b,i-1)$
Then let $T_y$, $T'_y$, and $T''_y$ be the partial tableaux for $y$ obtained
just after inserting $(b,i-1)$, $(a,i)$, and $(c,i+1)$.
Define $T_z$, $T'_z$, and $T''_z$ relative to $z$ similarly. Note that $T_z=T_y$.
Then write 
\[
\ba \b^y_b(k)&:=\b_{T\from b}(k),
\\
\b^y_a(k)&:=\b_{T_y\from a}(k),
\\
\b^y_c(k)&:=\b_{T'_y\from c}(k).
\ea
\] Define $\b^z_a(k)$, $\b^z_b(k)$ and $\b^z_c(k)$ similarly. Also, write 
\[ 
\f_y(b):=\f_{T\from b}, 
\f_y(a):=\f_{T_y\from a}, 
\quand
\f_y(c):=\f_{T'_y\from c}.\] For $z$, we define $\f_z(a),\f_z(b),\f_z(c)$ similarly.
When \[j\le m:=\min\{\f_y(a),\f_y(b),\f_y(c),\f_z(a),\f_z(b),\f_z(c)\},\] 
 Lemma~\ref{n2-lem} implies that we have $\b_{a}(j+1)< \b_{b}(j+1)< \b_{c}(j+1)$. 
 Since 
\begin{align*}
\b^y_b(j)\b^y_a(j)\b^y_c(j)&\K{}\b^y_b(j)\b^y_c(j)\b^y_a(j)\\
&=\b^z_b(j)\b^z_c(j)\b^z_a(j), 
\end{align*}
the $j$th rows of $T''_y$ and $T''_z$ are the same.
Without loss of generality, we can assume $m=1$. By Lemma~\ref{n2-lem}, we have 
\[ \f_y(a)>\f_y(b)=\f_z(b)\quand \f_z(c)<\f_z(a).\] 
Now we have two cases:
\ben
\item[(I)]  $1=\f_y(c)<\f_y(b)<\f_y(a)$ and $\f_z(c)=1$, or  
\item[(II)] $\f_y(b)=1<\f_y(c)$.
\een
In case (I), consider the first rows of $T''_y$ and $T''_z$. These rows differ only in the entries of their last box, which are $i+1$ for $T''_y$ and $i$ for $T''_z$. For the other rows, according to Lemma~\ref{n2-lem}, we have $\BPath_{T_y\leftarrow a}\le \BPath_{T\leftarrow b}$, so we must have \[i\prec_{T''_y} i-1\prec_{T''_y} i+1\quand T''_z=s_i(T''_y)=D_i(T''_y),\] so the desired result  follows from Lemmas~\ref{n1-lem} and \ref{n-lem2}.
Now suppose we are instead in case (II).
We consider the insertions that construct $ T'_y$, $ T''_y$, $ T'_z$, and $T''_z$. The first rows of $T_y\leftarrow a$, $ T'_y\leftarrow c$, and $T''_y$ are shown below; here $a'=\b^y_{a}(2)$ and we use parentheses to indicate entries that are only present when $a'<b$:
\begin{align*}
\ytabb{\cdots&(a')&(\cdots)&b&i-1}\leftarrow  a
&\Rightarrow\ytabb{\cdots& a&(\cdots)&(b)&i-1}\leftarrow  c\\
&\Rightarrow\ytabb{\cdots& a&(\cdots)&(b)& c}.
\end{align*}
In this situation we have $\b^y_{a}(2)=a'\le b$ and $\b^y_{c}(2)=i-1$.
Similarly, the first rows of $T_z\leftarrow c$, $ T'_z\leftarrow a$, and $ T''_z$ appear as
\begin{align*}
\ytabb{\cdots&(a')&(\cdots)&b&i-1}\leftarrow  c
&\Rightarrow\ytabb{\cdots&(a')&(\cdots)&b& c}\leftarrow  a\\
&\Rightarrow\ytabb{\cdots& a&(\cdots)&(b)& c},
\end{align*}
so $\b^z_{c}(2)=i-1$ and $\b^z_{a}(2)=a'\le b$.
Since $\f_y(b)=1$, the length of the second row of $T_y=T_z$ must be at least 2 less than the length of the first row. Therefore we have two further subcases.
\begin{itemize}
\item Assume $a'$ is greater than every entry in row $2$ of $T_y$. Then $\f_y(a)=2$, and 
the second rows of $T'_y\leftarrow c$ and $T''_y$ appear as \[
\ytabb{\cdots&a'&i}\leftarrow i-1
\Rightarrow\ytabb{\cdots&a'&i-1}
\]
so $\b^y_{c}(3)=i$.
Similarly, we have $\f_z(c)=2$ 
and the second rows of $T'_z\leftarrow a$ and $T''_z$ appear as
\[
\ytabb{\cdots&i-1&i}\leftarrow a'
\Rightarrow\ytabb{\cdots&a'&i}
\] 
so $\b^z_{c}(3)=i-1$.
As a result, we have $\f_y(c)=3$ and the last two rows of $T''_y$ and $T''_z$ appear as
\[
\ytabb{\cdots&\cdots&a'&i-1\\\cdots&i&i+1}
\quand
\ytabb{\cdots&\cdots&a'&i\\\cdots&i-1&i+1}.
\]
Therefore $T''_z=D_i(T''_y)$ and the desired result follows from Lemmas~\ref{n1-lem} and \ref{n-lem2}.

\item Alternatively suppose that $a'$ is not greater than every entry in row $2$ of $T_y$. Then we have $\f_y(a)>\f_y(c)=2$ and the second rows of $T'_y\leftarrow c$ and $ T''_y$ are 
\[
\ytabb{\cdots&a'&\cdots}\leftarrow i-1
\Rightarrow\ytabb{\cdots&a'&\cdots&i-1&i+1}
\]
so $\b^y_{a}(3)=a''$.
Similarly, we have $\f_z(c)=2$ 
and the second rows of $T'_z\leftarrow c$ and $ T''_z$ appear as
\[
\ytabb{\cdots&i-1&i}\leftarrow a'
\Rightarrow\ytabb{\cdots&a'&\cdots&i-1&i}
\] 
so $\b^z_{a}(3)=a''$.
As a result, we have $i\prec_{T''_y}i-1 \prec_{T''_y}i+1$ and $T''_z=D_i(T''_y)$.
Therefore, the desired result again follows from Lemmas~\ref{n1-lem} and \ref{n-lem2}.
\end{itemize}

\item[ii.] We could have $  y=\cdots(b,i-1)(a,i)\cdots(i+1,c)\cdots$ and $  z=\cdots(b,i-1)(a,i+1)\cdots(i,c)\cdots$ where $a<b<i-1<i+1<c$,
so that $y$ and $z$ have arc diagrams
\[
  y=\arcstart
{
*{a}   \arc{1.5}{rrr}  & *{b}\arc{0.5}{r} & *{\bullet}   & *{\bullet}& *{\bullet}\arc{0.5}{r}   & *{c}   
}
\arcstop
\quand   z=\arcstart
{
*{a}   \arc{2}{rrrr}  & *{b}\arc{0.5}{r} & *{\bullet}   & *{\bullet} \arc{1}{rr} & *{\bullet} & *{c}   
}
\arcstop.
\]
In this case, denote the partial tableaux for $y$ 
obtained just after inserting $(b,i-1)$, 
just after inserting $(a,i)$,
just before inserting $(i+1,c)$,
and just after inserting $(i+1,c)$ 
by $T$, $T_y$, $T'_y$, and $T''_y$, respectively.
Define $T_z$, $T'_z$, and $T''_z$ relative to $z$ similarly.
Then   $T_z$ is just $T_y$ after replacing $i$ by $i+1$. We abbreviate by writing 
\[ \f_y(b):=\f_{T\from b}\quand  \f_y(a):=\f_{T_y\from a}.\]
Since $a<b$, we must have $\f_y(b)<\f_y(a)$ by Lemma~\ref{n2-lem} and then $i\prec_{T'_y} i-1$.
After the insertions $T'_y\fromCBS(i+1,c)$ and $T'_z\fromCBS(i,c)$, 
we get 
\[i\prec_{T''_y} i-1\prec_{T''_y} i+1 \quand T''_z=s_i(T''_y)=D_i(T''_y)\] so the desired result follows from Lemmas~\ref{n1-lem} and \ref{n-lem2}.

\item[iii.] We could have 
$  y=\cdots(a,i)\cdots(i-1,b)\cdots(i+1,c)\cdots$ and $  z=\cdots(a,i+1)\cdots(i-1,b)\cdots(i,c)\cdots$ where $a<i-1<i+1<b<c$,
so that 
\[
  y=\arcstart
{
*{a}   \arc{1}{rr}  & *{\bullet}\arc{1.5}{rrr} & *{\bullet}  & *{\bullet}\arc{1}{rr}       & *{b} & *{c}   
}
\arcstop
\quand   z=\arcstart
{
*{a}   \arc{1.5}{rrr}  & *{\bullet}\arc{1.5}{rrr} & *{\bullet} \arc{1.5}{rrr}      & *{\bullet}  & *{b} & *{c}     
}
\arcstop.
\]
Let $T_y$ denote the partial tableau for $y$ obtained after inserting $(a,i)$, 
let $T'_y$ 
be the partial tableau for $y$ obtained after inserting $(i-1,b)$, 
let $T''_y$ be the partial tableau for $y$ obtained before inserting $(i+1,c)$, and
let $T'''_y$ be the partial tableau for $y$ obtained after inserting $(i+1,c)$. 
Define $T_z$, $T'_z$, $T''_z$, and $T'''_z$ relative to $z$ similarly.
Then $T'_z$ is just $T'_y$ after replacing $i$ by $i+1$,
and we have $i\prec_{T'_y} i-1$. Thus, by Lemma~\ref{n1-lem}, we have $i\prec_{T''_y} i-1$. 
Now consider the insertion  $T''_y\fromCBS(i+1,c)$ and $T''_z\fromCBS(i,c)$. Then $i+1$ is in the first row of $T'''_y$ 
while $i$ is in the first row of $T'''_z$, so  \[i\prec_{T'''_y} i-1_{T'''_y}\prec i+1\quand T'''_z=s_i(T'''_y)=D_i(T'''_y).\] 
Thus the desired result follows from Lemmas~\ref{n1-lem} and \ref{n-lem2}.

\item[iv.] Finally, we could have $  y=\cdots(i,a)\cdots(i-1,b)\cdots(i+1,c)\cdots$ and $  z=\cdots(i+1,a)\cdots(i-1,b)\cdots(i,c)\cdots$ where $i+1<a<b<c$,
so that 
\[
  y=\arcstart
{
*{\bullet}   \arc{2}{rrrr}  & *{\bullet}\arc{1}{rr}  & *{\bullet}   \arc{1.5}{rrr}    & *{a} & *{b} & *{c}   
}
\arcstop
\quand  z=\arcstart
{
*{\bullet}   \arc{2}{rrrr}  & *{\bullet} \arc{2}{rrrr} & *{\bullet}  \arc{0.5}{r}     & *{a} & *{b} & *{c}   
}
\arcstop.
\]
In this case, the proof is similar to case iii after replacing $(a,i)$ by $(i,a)$.
\end{itemize}

\item[(3e)] Assume $y(i-1)=i+1$ and $y(i)=a<i-1$. In this case, $y=\cdots(a,i)(i-1,i+1)\cdots$ and $  z=\cdots(i-1,i)(a,i+1)\cdots$
have arc diagrams 
\[
  y=\arcstart
{
 *{a}\arc{1}{rr} &  *{\bullet}\arc{1}{rr}  & *{\bullet} & *{\bullet}
}
\arcstop
\quand   z=\arcstart
{
 *{a}\arc{1.5}{rrr} &  *{\bullet}\arc{0.5}{r}  & *{\bullet} & *{\bullet}
}
\arcstop.
\]
Let $T$ denote the partial tableau for $y$ obtained just before inserting $(a,i)$.
Then let $T_y$ and $T'_y$  be the partial tableaux  for $y$ obtained after inserting $(a,i)$ and $(i-1,i+1)$, respectively.
Likewise define $T_z$ and $T'_z$ to be the partial tableaux  for $z$ obtained after inserting $(i-1,i)$ and $(a,i+1)$, respectively.
Then write 
\[ \b^y_a(k):= \b_{T\from a}(k)
\quand \b^y_{i-1}(k):=\b_{T_y\from i-1}(k).\] 
Also, write 
\[ \f_y(a):=\f_{T\from a}\quand
\f_y(i-1):=\f_{T_y\from i-1}.\] 
There are now two cases, as we either have (I) $\f_y(a)=1$ or (II) $\f_y(a)=j>1$.
In case (I), since $\f_y(a)=1$, the first two rows of $T'_y$ and $T'_z$ appear as 
\[
T'_y=\ytabb{\cdots&\cdots&a&i-1\\\cdots&i&i+1}
\quand
T'_z=\ytabb{\cdots&\cdots&a&i\\\cdots&i-1&i+1}.
\]
Thus, we have $i\prec_{T'_y}i+1\prec_{T'_y}i-1$ and $T'_z=s_i(T'_y)$ and the desired result follows from Lemma~\ref{n-lem2}. 
Suppose instead that we are in case (II).
 If $i$ is in the first row of $T_y$, then $a$ and $i$ are the last two elements in the first row of $T_y$.
 Thus $T_y$ and $T_z$ appear as below, where we write $f=\f_y(a)$ and $a_j=\b^y_a(j)$ for $j<f$
(so $a=a_1$):
\[
T_y=\ytabb{\cdots&a_1&i\\\cdots&a_2\\\cdots&\cdots\\\cdots&a_f}
\quand
T_z=\ytabb{\cdots&a_2&i-1&i\\\cdots&\cdots\\\cdots&a_f\\\cdots}.
\]
After inserting $(i-1,i+1)$ into $T_y$ and $(a,i+1)=(a_1,i+1)$ into $T_z$ we get 
\[
T'_y=\ytabb{\cdots&a_1&i-1&i+1\\\cdots&a_2&i\\\cdots&\cdots\\\cdots&a_f}
\quand
T'_z=\ytabb{\cdots&a_1&i-1&i\\\cdots&a_2&i+1\\\cdots&\cdots\\\cdots&a_f}
\]
so we have \[i\prec_{T'_y}i-1\prec_{T'_y}i+1 \quand T'_z=s_{i-1}(T'_y)=D_i(T'_y).\]
If $i$ is not in the first row of $T_y$,
then $\f_y(a)>\f_y(i-1)$, so $\BPath_{T\leftarrow a}\cap\BPath_{T_y\leftarrow i-1}=\varnothing$. 
Thus, interchanging the order in which we insert the cycles involving $a$ and $i-1$ does not 
change anything but the position of $i$ and $i+1$. 
Consequently, we have 
\[ i\prec_{T'_y}i-1\prec_{T'_y}i+1\quand  T'_z=s_i(T'_y)=D_i(T'_y).\]
Either way, the desired result follows from Lemmas~\ref{n1-lem} and \ref{n-lem2}.
\end{itemize}
In all of these cases we deduce that $\PCBS(y) = D_i(\PCBS(z))$ as needed.
\end{proof}

Propositions~\ref{prop1}, \ref{prop2}, and \ref{prop3}
address the three cases in Theorem~\ref{n-cb-thm}. By combining these results, the theorem follows.

\section*{Declarations}

\subsection*{Funding}

This work was partially supported by grant GRF 16306120 from the Hong Kong Research Grants Council and by grant 2023M741827 from the China Postdoctoral Science Foundation. 
The authors have no other relevant financial or non-financial interests to disclose.
 
 \subsection*{Acknowledgments}
 
We are very grateful to Joel Lewis for helpful discussions which benefited this work, and for contributing the proof of Proposition~\ref{only-fixed-prop}.

\end{document}